\definecolor{colorGreen}{rgb}{0.,0.67,0}
\definecolor{colorRed}{rgb}{0.67,0.,0.}
\definecolor{colorBlue}{rgb}{0.,0.,0.67}
\newtheorem{theorem}{Theorem}[section]
\newtheorem{lemma}[theorem]{Lemma}
\newtheorem{proposition}[theorem]{Proposition}
\newtheorem{definition}[theorem]{Definition}
\theoremstyle{remark}
\newtheorem{remark}[theorem]{\it \bf{Remark}\/}
\numberwithin{equation}{section}
\def\section{\@startsection{section}{1}%
  \z@{1.5\linespacing\@plus\linespacing}{.5\linespacing}%
  {\normalfont\bfseries\large\centering}}
\newcommand{\be}{\begin{equation}}
\newcommand{\ee}{\end{equation}}
\newcommand{\bea}{\begin{eqnarray}}
\newcommand{\eea}{\end{eqnarray}}
\newcommand{\bee}{\begin{eqnarray*}}
\newcommand{\eee}{\end{eqnarray*}}
\def\pa{\partial}
\def\RR{\mathbb{R}}
\def\fref#1{{\rm (\ref{#1})}}
\def\supess{\mathop{\operator@font Sup\,ess}}
\def\RR{\mathbb{R}}
\def\e{\varepsilon}
\def\fref#1{{\rm (\ref{#1})}}
\def\R2+{\RR ^2_+}
\def\lsl{\frac{\lambda_s}{\lambda}}
\def\pa{\partial}
\def\lim{\mathop{\rm lim}}
\def\sup{\mathop{\rm sup}}
\def\exp{{\rm exp}}
\def\e{\varepsilon}
\def\l{\lambda}
\def\log{{\rm log}}
\def\lsl{\frac{\lambda_s}{\lambda}}
\def\eb{\e}
\def\ebo{\e}
\def\qbt{P_{B_1}}
\def\qbtb{P_{B_1}}
\def\pbt{\tilde{P}_{B_0}}
\def\pa{\partial}
\def\vt{\tilde{V}}
\def\pa{\partial}
\title[]{Smooth type II blow up solutions to the four dimensional energy critical wave equation}
\author[M. Hillairet]{Matthieu Hillairet}
\address{Ceremade, Universit\'e Paris Dauphine, France}
\email{hillairet@ceremade.dauphine.fr}\author[P. Rapha\"el]{Pierre Rapha\"el}
\address{Institut de Math\'ematiques de Toulouse, Universit\'e Toulouse III, France}
\email{pierre.raphael@math.univ-toulouse.fr}
\begin{document}
\maketitle

\begin{abstract}
We exhibit $\mathcal C^{\infty}$ type II blow up solutions to the focusing energy critical wave equation in dimension $N=4$. These solutions admit near blow up  time a decomposiiton $$u(t,x)=\frac{1}{\lambda^{\frac{N-2}{2}}(t)}(Q+\e(t))(\frac{x}{\lambda(t)}) \ \ \mbox{with} \ \ \|\e(t),\pa_t\e(t)\|_{\dot{H}^1\times L^2}\ll1 $$ where $Q$ is the extremizing profile of the Sobolev embedding $\dot{H}^1\to L^{2^*}$, and a blow up speed $$\lambda(t)=(T-t)e^{-\sqrt{|\log (T-t)|}(1+o(1))} \ \ \mbox{as} \ \ t\to T.$$
\end{abstract}


\section{Introduction}


\subsection{Setting of the problem}

We deal in this paper with the energy critical focusing wave equation 
\be
\label{wave}
\left\{\begin{array}{ll}\pa_{tt}u-\Delta u-f(u)=0 \ \ \ \mbox{with} \ \ f(t)=t^{\frac{N+2}{N-2}}, \\ (u,\pa_tu)_{|t=0}=(u_0, u_1), \ \ (t,x)\in \RR\times \RR^N.
\end{array} \right .
\ee
in dimension $$N=4.$$ This is a special case of the nonlinear wave equation 
\be
\label{energycri}
\pa_{tt}u-\Delta u-f(u)=0
\ee
which since the pioneering works by J\"orgens \cite{Jurgens} has attracted a considerable amount of works. For the energy critical nonlinearity $f(u)=\pm t^{\frac{N+2}{N-2}}$, the Cauchy problem is locally well posed in the energy space $\dot{H}^1\times L^2$ and the solution propagates regularity, see for example Sogge \cite{sogge} and references therein. Recall that in this case, \fref{energycri} admits a conserved energy $$E(u(t))=E(u_0,u_1)=\frac12 \int (\pa_tu)^2+\frac 12\int|\nabla u|^2\mp\frac{N-2}{2N}\int u^{\frac{2N}{N-2}}$$ which is left invariant by the scaling symmetry of the flow: $$u_{\lambda}(t,x)=\frac{1}{\lambda^{\frac{N-2}{2}}}u(\frac{t}{\lambda},\frac{x}{\lambda^2}).$$ Global existence in the defocusing case was proved by Struwe \cite{struweone} for radial data and Grillakis \cite{grillakis} for general data. For focusing nonlinearities, a sharp threshold criterion of global existence and scattering or finite time blow up is obtained by Kenig and Merle \cite{KMacta} based on the solitonic solution to \fref{wave}:
\be
\label{defqexplicite}
Q(r)=\left(\frac{1}{1+\frac{r^2}{N(N-2)}}\right)^{\frac{N-2}{2}}
\ee 
which is the extremizing profile of the Sobolev embedding $\dot{H}^1\to L^{2^*}$. Indeed, for initial data $(u_0,u_1)$ such that $E(u_0,u_1)<E(Q,0)$, those with $\|\nabla u_0\|_{L^2}<\|\nabla Q\|_{L^2}$ have global solutions and scatter, while those with $\|\nabla u_0\|_{L^2}>\|\nabla Q\|_{L^2}$ lead to finite time blow up.\\
Note that like in the works by Levine \cite{levine}, see also Strauss \cite{strauss}, and as is standard in a nonlinear dispersive setting, blow up is derived through obstructive convexity arguments, see also Karageorgis and Strauss \cite{karastrauss} for refined statements near the soliton $Q$. However, this approach gives very little insight into the description of the blow up mechanism and the description of the flow even just near the ground state soliton $Q$ is still only at its beginning.


\subsection{On the energy critical wave map problem}


There is an important litterature devoted to the construction of blow up solutions for nonlinear wave equations, see e.g. Alinhac \cite{Alinhac}, and Merle and Zaag \cite{Merlezaag1}, \cite{Merlezaag2} for the study of the ODE type of blow up for subcritical nonlinearities. For energy critical problems like \fref{wave}, recent important progress has been made through the study of the two dimensional energy critical corotational wave map to the 2-sphere:
\be
\label{wavemap}
\pa_{tt}u-\pa_{rr}u-\frac{\pa_ru}{r}-\frac{k^2\sin{2u}}{2r^2}=0,
\ee 
where $k\in \Bbb N^*$ is the homotopy number. The ground state is given there by $$Q(r)=2\tan^{-1}(r^k).$$ After the pioneering works by Christodoulou, Tahvildar-Zadeh \cite{ChrsitZadeh}, Shatah and Tahvildar-Zadeh \cite{shattahtahvildar} and Struwe \cite{Struwewm} and their detailed study of the concentration of energy scenario, the first explicit description of singularity formation for the $k=1$ case is derived by Krieger, Schlag and Tataru \cite{KSTinvent} who construct finite energy finite time blow up solutions of the form 
\be
\label{decompuontro}
u(t,x)=(Q+\e)(t,\frac{x}{\lambda(t)}) \ \ \mbox{with} \ \ \|\e(t),\pa_t\e(t)\|_{\dot{H^1}\times L^2}\ll1
\ee 
 with a blow up speed given by $$\lambda(t)=(T-t)^{\nu}, \ \ \forall \ \nu>\frac{3}{2},$$ see also \cite{KSTyangmills}. The spectacular feature of this result is to exhibit arbitrarily slow blow up regimes further and further from self similarity which would correspond to the --forbidden, see \cite{Struwewm}-- self similar law 
\be
\label{selfsimilarlaw}
\lambda(t)\sim T-t.
\ee
Numerics suggest \cite{bizon} that this blow up scenario is non generic and corresponds to finite codimensional manifolds. After the pioneering works \cite{RodSter} for large homotopy number $k\geq 4$, Rapha\"el and Rodnianski \cite{RaphRod} give a complete description of a {\it stable} blow up dynamics which originates from smooth data and for all homotopy number $k\geq 1$. The blow up speed obeys in this regime a {\it universal law which depends in an essential way on the rate of convergence} of the ground state $Q$ to its asymptotic value $$\pi-Q\sim \frac{1}{r^k} \ \ \mbox{as} \ \ r\to\infty,$$ and indeed the stable blow up regime corresponds to a decomposition \fref{decompuontro} with the blow up speed 
\be
\label{reatewm}
\lambda (t)\sim \left\{\begin{array}{ll} c_k\frac{T-t}{|\log (T-t)|^{\frac{1}{2k-2}}} \ \ \mbox{for} \ \ k\geq 2,\\ (T-t)e^{-\sqrt{|\log (T-t)|}} \ \ \mbox{for} \ \ k=1.\end{array}\right .
\ee
Note that this work draws an important analogy with another critical problem, the $L^2$ critical nonlinear Schr\"odinger equation, where a similar universality of the stable singularity formation near the ground state is proved by Merle and Rapha\"el in the series of papers \cite{MR1}, \cite{MR2}, \cite{R1}, \cite{MR3}, \cite{MR4}, \cite{MR5}.


\subsection{Statement of the result}


For the power nonlinearity energy critical problem  \fref{wave}, there has been recent progress towards the understanding of the flow near the solitary wave $Q$.  In \cite{KSwave}, Krieger and Schlag  construct in dimension $N=3$ a codimension one manifold of initial data near $Q$ which yield global solutions asymptotically converging to the soliton manifold. The strategy  developed  by Krieger, Schlag, Tataru in \cite{KSTinvent} for the wave map problem has been adapted in \cite{KSTwave} to show in dimension $N=3$ the existence of finite energy finite time blow up solutions of the form $$u(t,x)=\frac{1}{\lambda^{\frac{N-2}{2}}(t)}(Q+\e)(t,\frac{x}{\lambda(t)}) \ \ \mbox{with} \ \ \|\e(t),\pa_t\e(t)\|_{\dot{H^1}\times L^2}\ll1 $$ and with a blow up speed given by
\be
\label{continiohf}
\lambda(t)=(T-t)^{\nu}, \ \ \forall \ \nu>\frac{3}{2}.
\ee
The quantization of the energy at blow up for small type II blow up solutions in dimension $N\in \{3,5\}$ is proved in \cite{DMKquanta}, \cite{DMKquantabis} in the radial and non radial cases. In particular, for radial data, if $T<+\infty$ and $$\sup_{t\in [0,T]}\left[|\nabla u(t)|^2_{L^2}+\pa_tu|^2_{L^2}\right]\leq |\nabla Q|_{L^2}^2+\alpha^*, \ \ \alpha^*\ll 1, $$ then there exists a dilation parameter $\lambda(t)\to 0$ as $t\to T$ and asympotic profiles $(u^*,v^*)\in H^1\times L^2$ such that 
$$\left(u(t,x)- \frac{1}{\lambda^{\frac{N-2}{2}}(t)}Q(\frac{x}{\lambda(t)}),\partial_tu(t)\right)\to (u^*,v^*) \ \ \mbox{in} \ \ \dot{H}^1\times L^2\ \ \mbox{as} \ \ t\to T,
$$ 
see \cite{MR5} for related classification results for the $L^2$ critical (NLS).\\
These works however leave open the question of the existence of {\it smooth} type II blow up solutions. We claim that such smooth type II blow up solutions can be constructed in dimension $N=4$ as the formal analogue of the singular dynamics exhibited by Rapha\"el and Rodnianski \cite{RaphRod} for the wave map problem in the least homotopy number class $k=1$. The following theorem is the main result of this paper:

\begin{theorem}[Existence of smooth type II blow up solutions in dimension $N=4$]
\label{thmmain}
Let $N=4$. Then for all $\alpha^*>0$, there exist $\mathcal C^{\infty}$ initial data $(u_0,u_1)$ with $$E(u_0,u_1)<E(Q,0)+\alpha^* $$ such that the corresponding solution to the energy critical focusing wave equation \fref{wave} blows up in finite time $T=T(u_0,u_1)<+\infty$ in a type II regime according to the following dynamics: there exist $(u^*,v^*)\in \dot{H}^1\times L^2$ such that 
\be
\label{energyquantization}
\left(u(t,x)- \frac{1}{\lambda^{\frac{N-2}{2}}(t)}Q(\frac{x}{\lambda(t)}),\partial_tu(t)\right)\to (u^*,v^*) \ \ \mbox{in} \ \ \dot{H}^1\times L^2\ \ \mbox{as} \ \ t\to T
\ee 
with a blow up speed given by \be
\label{lawlambda}
\lambda(t)=(T-t)e^{-\sqrt{|\log (T-t)|}(1+o(1))} \ \ \mbox{as} \ \ t\to T.
\ee
\end{theorem}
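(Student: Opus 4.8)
The plan is to follow the singularity formation mechanism identified by Rapha\"el and Rodnianski \cite{RaphRod} for the $k=1$ corotational wave map, transported to the semilinear structure of \fref{wave} in dimension $N=4$, in the spirit of the $L^2$ critical (NLS) analysis of \cite{MR1,MR2,R1,MR3}. The organizing fact is the slow decay $Q(r)\sim 8r^{-2}$ as $r\to\infty$ in dimension four, which makes the generator of scaling $\Lambda Q=Q+y\cdot\na Q$ —an element of the kernel of the linearized operator $\mathcal{L}=-\Delta-f'(Q)=-\Delta-3Q^2$— fail to be square integrable, with the logarithmic divergence $\int_{|y|\le B}|\Lambda Q|^2\sim c\log B$. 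This resonance at the bottom of the spectrum is the exact analogue of the $k=1$ wave map situation, and it is what produces the anomalous law \fref{lawlambda}. Note moreover that $\langle\mathcal{L}Q,Q\rangle=-\tfrac{4}{N-2}\int Q^{\frac{2N}{N-2}}<0$, so $\mathcal{L}$ carries exactly one negative direction on the radial sector, which will have to be removed by a soft topological argument on the initial data (a one–parameter shooting, as opposed to the genuinely stable wave map regime).

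First I would construct a family of radial approximate blow up profiles $Q_b$, $0<b\ll1$, by solving formally the tail dynamics: writing $Q_b=Q+b^2T_1+b^4T_2+\dots$ truncated at a finite order and cut off at the self-similar scale $|y|\sim b^{-1}$, the corrections $T_j$ are obtained by successively inverting $\mathcal{L}$ against explicit right hand sides built from $\Lambda Q$ and lower order terms. Computing the error $\Psi_b=\pa_{tt}(Q_b)_\lambda-\Delta(Q_b)_\lambda-f((Q_b)_\lambda)$ along the formal modulation law $\lambda_s/\lambda=-b$, $b_s=-b^2$ (with $ds/dt=1/\lambda$) shows that the obstruction to solving exactly is governed by the non-integrable resonance: inverting $\mathcal{L}$ against $\Lambda Q$-type data produces an $O(b^2|\log b|)$ leading term, and this is precisely the origin of the square-root correction in \fref{lawlambda}. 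One arranges $Q_b$ so that $\Psi_b$ is controlled with arbitrarily many powers of $b$ in the zone $|y|\lesssim b^{-1}$.

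Next, for well prepared data close to $Q$ I decompose the solution as $u(t,x)=\lambda^{-\frac{N-2}{2}}(t)(Q_{b(t)}+\e)(t,x/\lambda(t))$ with orthogonality conditions on $\e$ (against $\Lambda Q$ and one further direction) which fix $\lambda(t)$, $b(t)$ and make the linearized energy coercive modulo the negative direction of $\mathcal{L}$, yielding modulation equations of the schematic form $-\lambda_s/\lambda=b+O(\|\e\|)$ and $b_s+b^2=-c\,b^2|\log b|+\dots$. The error $\e$ is then controlled by a mixed energy–virial Lyapunov functional, schematically $\mathcal{E}=\int(\pa_s\e)^2+\int|\na\e|^2-3\int Q_b^2\,\e^2+\frac{1}{\lambda}\int(\pa_s\e)\big(\chi_B\,y\cdot\na\e+c_N\chi_B\e\big)$, where $\chi_B$ localizes at $|y|\sim b^{-1}$; using the orthogonality conditions, Hardy's inequality and the sharp localized coercivity of $\mathcal{L}$, one proves a monotonicity formula $\frac{d}{ds}\big(\mathcal{E}/\lambda^{2}\big)\lesssim\frac{b}{\lambda^{2}}\big(\text{small}+\|\Psi_b\|^2+\|\e\|^2\big)$. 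Integrating this against the modulation law inside a bootstrap on $(b,\lambda,\|\e\|)$ traps the solution in the regime, the negative mode being disposed of by a Brouwer/continuity argument over the one-parameter family of data. Reintegrating $b=-\lambda_s/\lambda$ together with $b_s+b^2(1+c|\log b|)\approx0$ and $dt=-\lambda\,ds$ then gives $\lambda(t)=(T-t)e^{-\sqrt{|\log(T-t)|}(1+o(1))}$.

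Finally, the initial data are taken $\mathcal{C}^\infty$ on the finite dimensional set singled out by the bootstrap; persistence of regularity for \fref{wave} together with higher order energy bounds that stay uniform on $[0,T)$ keeps $u$ smooth up to blow up time, and $(u-\lambda^{-\frac{N-2}{2}}Q(\cdot/\lambda),\pa_tu)$ is shown to be Cauchy in $\dot H^1\times L^2$ as $t\to T$, with strong limit $(u^*,v^*)$, giving \fref{energyquantization}; since on $Q_b$ the energy exceeds $E(Q,0)$ by only $O(b^2|\log b|)$ the bound $E(u_0,u_1)<E(Q,0)+\alpha^*$ holds once the initial $b$ is small enough. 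I expect the main obstacle to be the coercivity and monotonicity of $\mathcal{E}$ in the presence of the logarithmically divergent resonance $\Lambda Q$: one must localize the virial identity exactly at the self-similar scale, absorb the unavoidable $|\log b|$ losses precisely where they are reproduced by the profile construction, and control the cross terms between $\e$, the profile error $\Psi_b$ and the modulation terms so as not to spoil the sign of $\frac{d}{ds}(\mathcal{E}/\lambda^2)$ — this is where the dimension $N=4$ and the precise decay $Q\sim r^{-2}$ are used in an essential way.
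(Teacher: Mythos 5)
Your overall architecture (profile $Q_b$ with a logarithmically divergent resonance, modulation, energy/bootstrap, one--parameter shooting to kill the negative mode of $H$) matches the paper's, but the quantitative heart of the argument --- the modified law for $b$ --- is stated incorrectly, and the error is fatal to the conclusion as written. You claim the inversion of the linearized operator against $\Lambda Q$--type data produces an $O(b^2|\log b|)$ term and that the modulation equation reads $b_s+b^2=-c\,b^2|\log b|+\dots$. The logarithm actually sits in the \emph{denominator}: since $(D\Lambda Q,\Lambda Q)=\lim_{y\to\infty}\tfrac12 y^4|\Lambda Q|^2=32>0$ (Pohozaev flux) while $\int_{y\le 1/b}|\Lambda Q|^2\sim 2|\log b|$, the irreducible correction to the profile equation is $c_b\,b^2\chi\Lambda Q$ with $c_b\sim\frac{1}{2|\log b|}$, and the resulting law is $b_s\simeq -\frac{b^2}{2|\log b|}$, i.e.\ $b_s=o(b^2)$, $b$ quasi--constant on self--similar time scales --- not $b_s\simeq -b^2$ with an additional $|\log b|$ amplification. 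This is not a cosmetic discrepancy: integrating your ODE gives $b\sim (c\,s\log s)^{-1}$, hence $-\log\lambda\sim \tfrac1c\log\log s$ and $\lambda\sim(\log s)^{-1/c}$, for which $t=\int\lambda\,ds$ diverges, so there is no finite--time blow up at all and certainly not the rate \fref{lawlambda}; the square--root law comes precisely from $b_s\simeq-\frac{b^2}{2|\log b|}$ via $b\sim \frac{2\log s}{s}$, $-\log\lambda\sim(\log s)^2$. For the same reason your claim that $Q_b$ can be arranged so that $\Psi_b$ is ``controlled with arbitrarily many powers of $b$'' in the zone $|y|\lesssim b^{-1}$ cannot hold: the flux term $c_b b^2\chi\Lambda Q$ is a non--removable obstruction (orthogonality to the resonance fails by exactly $32$), and it is this term, extracted in the sharp flux computation $(\Psi_{B_1},\Lambda \pbt)=32b^2(1+O(1/|\log b|))$, that drives the $b$ equation.

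A secondary, less clear--cut gap: your Lyapunov functional is an $\dot H^1$--level energy with a localized virial correction, whereas the paper's monotonicity is run at the $\dot H^2$ level, $\mathcal E=\lambda^2\int[(H_\lambda\partial_t w,\partial_t w)+(H_\lambda w)^2]$, precisely because the factorization $H=A^*A$ available for wave maps is lost here; closing the estimates then requires a specific sign structure of the $b_s$ terms generated by the orthogonality condition $(\e,\chi_M\Phi)=0$ (the terms \fref{signone}, \fref{signtwo}) and a coercivity property of an explicit quadratic form with two negative directions controlled by $(\eta,\psi)$, $(\eta,\Phi)$ (Lemma \ref{lemmacoercitivity}, verified in Appendix B). You would need to either supply a proof that an $\dot H^1$--virial scheme closes despite the slow decay $Q\sim r^{-2}$ and the non--$L^2$ resonance, or adopt a second--order energy with these ingredients; as proposed, this step is asserted rather than argued. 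The treatment of the unstable eigenmode by a Brouwer--type continuity argument and the final passage to \fref{energyquantization} are in line with the paper.
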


\medskip

\noindent{\it Comments on the result}\\
{\it 1. On the smoothness of the initial data}: An important feature of Theorem \ref{thmmain} is to exhibit a new blow up speed which is valid for {\em $\mathcal C^{\infty}$} solutions. Indeed, while the Krieger, Schlag, Tataru \cite{KSTwave} approach provides a continuum of blow up speeds, the exact regularity of the obtained solutions is not known, which is an unpleasant consequence of their construction scheme. In fact, it is expected that $\mathcal C^{\infty}$ initial data should lead to {\it quantize} blow up rates hence breaking the continuum of blow up speeds \fref{continiohf},  we refer to \cite{papierheat} for a related discussion in the context of the energy critical harmonic heat flow. Hence we expect the blow up rate \fref{lawlambda} to correspond to the minimal type II blow up speed of smooth solutions with small super critical energy. Such a general lower bound on blow up rate in the spirit of the one obtained by Merle and Raphael for the $L^2$ critical NLS \cite{R1}, \cite{MR4} is an open problem. The construction of excited blow up solutions with other speeds and $\mathcal C^{\infty}$ regularity also remains to be done. This problematic is related to the understanding of the structure of the flow near $Q$ which is still at its beginning.\\ 

{\it 2. On the codimension one manifold}:  The proof of Theorem \ref{thmmain} involves a detailed description of the set of initial data leading to the type II blow up with speed \fref{lawlambda}. Indeed, given a small enough parameter $b_0>0$ and a suitable deformation $Q_b$ of the soliton with $$Q_{b_0}\to Q \ \ \mbox{as} \ \ b_0\to 0$$ in some strong sense, we show that for any smooth and radially symmetric excess of energy $$\|\eta_0,\eta_1\|_{H^2\times H^1}\lesssim   \frac{b_0^2}{|\log(b_0)|},$$ we can find $d_+(b_0,\eta_0,\eta_1)\in \Bbb R$ such that the solution to \fref{wave} with initial data $$u_0=Q_{b_0}+\eta_0+d_+\psi, \ \ u_1=b_0\left(\frac{N-2}{2} Q_{b_0}+y\cdot\nabla Q_{b_0}\right)+\eta_1$$ blows up in finite time in the regime described by Theorem \ref{thmmain}. Here $\psi$ is the bound state of the linearized operator close to $Q$ and generates the unstable mode, we refer to Definition \ref{defexittime} and Proposition \ref{propcle} for precise statements. Hence the set of blow up solutions we construct live on a codimension one manifold in the radial class in some weak sense. Following \cite{KSwave},  \cite{KSnls}, the proof that this set is indeed a codimension one manifold relies on proving some Lipschitz regularity of the map $(b_0,\eta_0,\eta_1)\to d_+(b_0,\eta_0,\eta_1)$, and in particular some local uniqueness to begin with. The analysis in \cite{KSnls} shows that this may be a delicate step in some cases. Our solution is constructed using a soft continuous topological argument of Brouwer type coupled with suitable monotonicity properties in the spirit of Cote, Marte and Merle \cite{cotemartelmerle}, and in other related settings, see e.g. Martel \cite{martelmulti}, Rapha\"el and Szeftel \cite{RaphSzef}, this strategy has proved to be quite powerful to eventually achieve strong uniqueness results. This interesting question in our setting will require additional efforts and needs to be adressed separately in details.\\

{\it 3. Extension to higher dimensions}: We focus onto the case of dimension $N=4$ for the sake of simplicity, and our main objective is to provide a robust framework to construct $\mathcal C^{\infty}$ type II blow up solutions. However, following the heurisitic  developed  in \cite{RaphRod}, the blow up speed \fref{lawlambda} corresponds to the $k=1$ case in \fref{reatewm}, and we similarly conjecture in dimension $N\geq 5$ the existence type II finite type blow up solutions close to $Q$ with blow up speed $$\lambda(t)\sim c_N\frac{T-t}{|\log (T-t)|^{\frac{1}{N-4}}}.$$ Note from \fref{defqexplicite} that the higher the dimension, the fastest the decay of the ground state $Q$, and this should avoid some difficulties which occur only in low dimension like in \cite{RaphRod} for large homotopy number $k\geq 4$.  We expect the strategy  developed  in this paper to carry over to the case $N=5,6$, but the extension to large dimension will be confronted in particular to the difficulty of the lack of smoothness of the nonlinearity. Let us also insist onto the fact that the case $N=4$ is in many ways the more delicate one in terms of the strong coupling of the main part of the solution and the outgoing tail due to the slow decay of $Q$, which results in the somewhat pathological blow up speed \fref{lawlambda}. This comment becomes even more dramatic in dimension $N=3$ where we expect our analyis to be applicable to construct $\mathcal C^{\infty}$ type II blow up solutions, but this seems to require a slightly different approach.


\subsection{Strategy of the proof}


Let us briefly summarize the strategy of the proof of Theorem \ref{thmmain}. \\

{\bf step 1} Approximate self similar solution.\\

Let $D,\Lambda$ denote the differential operators \fref{defdl}. Exact self similar solutions to \fref{wave} of the form $$u(t,x)=\frac{1}{\lambda^{\frac{N-2}{2}}(t)}Q_{b}\left(\frac{x}{\lambda(t)}\right) \ \ \mbox{with} \ \ b=-\lambda_t$$  where $Q_b$ satisfies the self similar equation 
\be
\label{voehoeigr}
\Delta Q_b-b^2D\Lambda Q_b+Q_b^3=0
\ee 
are known to develop a singularity on the light cone $y=\frac{T-t}{\lambda(t)}=\frac{1}{b}$ leading to an unbounded Dirichlet energy $\|\nabla Q_b\|_{L^2}=+\infty$, see Kavian, Weissler \cite{KaWe}. We therefore assume $0<b\ll 1$ and consider a one term expansion approximation $$Q_b=Q+b^2T_1$$ which injected into \fref{voehoeigr} yields at the order $b^2$: 
\be
\label{eqtoneintro}
HT_1=-D\Lambda Q.
\ee
Here $H$ is the linearized operator close to Q given by 
\be
\label{defhclosetoq}
H=-\Delta -\frac{N+2}{N-2}Q^{\frac{4}{N-2}}.
\ee
The spectral structure of $H$ is well knwon in connection to the fact that $Q$ is an extremizer of the Sobolev embedding $\dot{H}^1\to L^{2^*}$, and in the radial sector, $H$ admits one non positive eigenvalue with well localized eigenvector $\psi$:
\be
\label{defmupsi}
H\psi=-\zeta\psi, \ \ \zeta>0,
\ee
and a {\it resonance} at the boundary of the continuum spectrum generated by the scaling invariance of \fref{wave}:
\be
\label{resonance}
H(\Lambda Q)=0, \ \ \Lambda Q(r)\sim \frac{C}{r^{N-2}}\ \  \mbox{as} \ \ r\to +\infty.
\ee
In order to solve \fref{eqtoneintro}, we first remove the leading order growth in the exact solution $T_1=\frac{1}{4}|y|^2Q$ which is consequence of the flux computation: 
\be
\label{fouxcomputation}
(D\Lambda Q,\Lambda Q)=\frac12\lim_{y\to +\infty}y^4|\Lambda Q|^2>0
\ee due to the slow decay of $Q$ in dimension $N=4$ from \fref{defqexplicite}. For this, we solve $$HT_1=-D\Lambda Q+c_b\Lambda Q{\bf 1}_{y\leq \frac1b} \ \ \mbox{with} \ \ c_b=\frac{(D\Lambda Q,\Lambda Q)}{\int_{y\leq \frac1b}|\Lambda Q|^2}\sim\frac{1}{2|\log b|} \ \ \mbox{as} \ \ b\to 0.$$ The purpose of this construction is to yield after a suitable localization  process an $o(b^2)$ approximate solution to the self similar equation \fref{voehoeigr} which dominant term near and past the light cone is still given by $Q$ itself in the sense that: $$b^2|T_1|\ll Q \ \ \mbox{for} \ \ y\geq \frac1b.$$ This identifies $Q$ as the leading order radiation term\footnote{see \cite{RaphRod} for a further discussion on this issue and the role played by the non vanishing Pohozaev integration \fref{fouxcomputation}}.\\

{\bf step 2} Bootstrap estimates.\\

We now roughly consider initial data of the form 
\be
\label{intiialdata}
u_0=Q_{b_0}+d_+\psi+\eta_0,\ \ u_1=b_0\Lambda Q_{b_0}+\eta_1, \ \text{with} \ |d_+|+\|\eta_0,\eta_1\|_{H^2\times H^1}\ll b_0^2,
\ee and introduce a modulated decomposition of the flow $$u(t,x)=\frac{1}{\lambda^{\frac{N-2}{2}}(t)}(Q_{b(t)}+\e)\left(t,\frac{x}{\lambda(t)}\right), \ \ b(t)=-\lambda_t.$$ Here we face the major difference between the power nonlinearity wave equation \fref{wave} and the critical wave map problem \fref{wavemap} which is the presense of a negative eigenvalue in the first case \fref{defmupsi} for the linearized operator $H$ close to $Q$. This induces an instability in the modulation equations for $b,\lambda$ which is absent in the wave map case, leading to stable blow up dynamics. However, we claim that the ODE type instability generated by \fref{defmupsi} is the {\it only} instability mechanism.\\
The situation is conceptually similar to the one studied in \cite{cotemartelmerle} where multisolitary wave solutions are constructed in the supercritical regime despite the presence of exponentially growing modes for the linearized operator which are absent in the subcritical regime. We adapt a similar scheme of proof which does not rely on a fixed point argument to solve the problem from infinity in time\footnote{after renormalization of the time}, but by directly following the flow for {\it any} initial data of the form \fref{intiialdata}. This reduces the full problem to a one dimensional dynamical system for which a classical clever continuity argument yields the existence of $d_+(b_0,\eta_0,\eta_1)$ such that the unstable mode is extinct, see section \ref{sectionfour}.\\
The key is hence to control the flow under the a priori control of the unstable mode, and here we adapt the technology  developed  in \cite{RaphRod} which relies on {\it monotonicity properties} of the linearized Hamiltonian at the $H^2$ level of regularity. However, the analysis in \cite{RaphRod} heavily relies on the existence of a decomposition of the Hamiltonian $$H=A^*A, \ \ A=-\partial_y+V(y)$$ which is central in the proof of the main monotonicity property and is lost in our setting. This forces us to revisit the approach in several ways, and to rely in particular on fine algebraic properties of the flow\footnote{see in particular \fref{signone}, \fref{signtwo}} near $Q$ and coercitivity properties of suitable quadratic forms in the spirit of \cite{MartelMerlekdv}, \cite{MR1}, see Lemma \ref{lemmacoercitivity},  which remarkably turn out to be almost explicit thanks to the formula \fref{defqexplicite}. We are eventually able to find $d_+(b_0,\eta_0,\eta_1)$ for which to leading order $$b_s\sim -c_bb^2\sim-\frac{b^2}{2|\log b|}, \ \  b=-\lambda_t, \ \ \frac{ds}{dt}=\frac{1}{\lambda}, \ \ |d_+|+\|\pa_{yy}\e\|_{L^2}\ll b^2$$ which reintegration in time yields finite time blow up in the regime described by Theorem \ref{thmmain}.


\subsection{Notations} 


We define the differential operators: 
\be
\label{defdl}
\Lambda f=\frac{N-2}{2}f+y\cdot\nabla f \ \ (\mbox{$\dot{H}^1$ scaling}), \ \ Df=\frac{N}{2}f+y\cdot\nabla f\ \  (\mbox{$L^2$ scaling}).
\ee
Denoting  $$(f,g)=\int fg=\int_{0}^{+\infty}f(r)g(r)r^{N-1}dr$$ the $L^2(\RR^N)$ radial inner product, we observe the integration by parts formula:
\be
\label{adjoinctionfrimula}
(Df,g)=-(f,Dg), \ \ (\Lambda f,g)+(\Lambda g,f)=-2(f,g).
\ee
Given $f$ and $\lambda>0$, we shall denote: $$f_{\lambda}(t,r)=\frac{1}{\lambda^{\frac{N-2}{2}}}f\left(t,\frac{r}{\lambda}\right),$$ 
and the space rescaled variable will always be denoted by $$y=\frac{r}{\lambda}.$$ We let $\chi$ be a smooth positive radial cut off function $\chi(r)=1$ for $r\leq 1$ and $\chi(r)=0$ for $r\geq 2$. For a given parameter $B>0$, we let 
\be
\label{defchib}
\chi_B(r)=\chi\left(\frac{r}{B}\right).
\ee
Given $b>0$, we set
\be
\label{defbnot}
 B_0=\frac{2}{b}, \quad B_1=\frac{|\log b|}{b}.
\ee

To clarify the exposition we use the notation $a \lesssim b$ when there exists a constant $C$ with no relevant dependency on $(a,b)$  such that $a \leq C b.$ In particular, we do not allow constants $C$ to depend on the parameter $M$ except in Appendix A.

{\bf Aknowledegments}: The authors would like to thank Igor Rodnianski for stimulating discussions about this work. P.R is supported by the French ANR Jeune Chercheur SWAP.
	

\section{Computation of the modified self-similar profile}
\label{sectiontwo}


This section is devoted to the construction of an approximate self-similar solution $Q_b$ which describes the dominant part of the blow up profile inside the backward light cone from the singular point $(0,T)$ and displays a slow decay at infinity which is eventually responsible for the modifications to the blow up speed with respect to the self similar law. The key to this construction is the fact that the structure of the linearized operator $H$ close to $Q$ is completely explicit in the radial sector thanks to the explicit formulas at hand for the elements of the kernel.\\

We introduce the direction 
\be
\label{defphi}
\Phi=D\Lambda Q 
\ee
which displays the cancellation
\be
\label{cancelaationphi}|\Phi(y)|\lesssim \frac{1}{1+y^4}
\ee
and the crucial nondegeneracy which follows from the Pohozaev integration by parts formula:
\be
\label{degenfond}
(\Phi,\Lambda Q)=\lim_{y\to +\infty}\left(\frac12 y^4|\Lambda Q|^2\right)=32>0.
\ee

\begin{proposition}[Approximate self-similar solution]
\label{propqb}
Let $M$ denote a large enough constant. Then there exists $b^*(M)>0$ small enough such that for all $0<b<b^*(M)$, there exists a smooth radially symmetric profile $T_1$ satisfying the orthogonality condition \be
\label{orthotone}
(T_1,\chi_M\Phi)=0
\ee
such that 
\be
\label{defpb}
P_{B_1}=Q+\chi_{B_1}b^2T_1
\ee is an approximate self similar solution in the following sense. Let 
\be
\label{defpsib}
\Psi_{B_1}=-\Delta P_{B_1}+b^2D\Lambda P_{B_1}-f(P_{B_1}),
\ee 
then for all  $k \geq 0, 0\leq y\leq \frac{1}{b^2}$,
\be
\label{esttone}
\left|  \dfrac{d^k T_1}{dy^k}(y) \right|
\lesssim \frac{1}{1+y^k}\left[\frac{1+|\log(by)|}{|\log b|}{\bf 1}_{2\leq y\leq \frac{B_0}{2}}+\frac{1}{b^2y^2|\log b|}{\bf 1}_{y\geq \frac{B_0}{2}}+\frac{\log (M)+|\log (1+y)| }{1+y^2}
\right],
\ee
\be
\label{estpbcut}
\left|\frac{d^k}{dy^k}\frac{\partial P_{B_1}}{\partial b}\right|  \lesssim  \frac{b{\bf 1}_{ y\leq 2 B_1}}{1+y^k}\left[\frac{1+|\log(by)|}{|\log b|}{\bf 1}_{2\leq y\leq \frac{B_0}{2}}+\frac{1}{b^2y^2|\log b|}{\bf 1}_{y \geq \frac{B_0}{2}}+\frac{\log (M)+|\log (1+ y)|}{1+y^2}
\right],
\ee
and, {for all $k \geq 0,$ $y\geq 0$,}
\begin{multline}
\label{estpsibloin} 
\left| \frac{d^k}{dy^k}(\Psi_{B_1} - c_b b^2\chi_{\frac{B_0}{4}} \Lambda Q) \right| \\[4pt] 
\begin{array}{cl}
\lesssim & \dfrac{b^4}{1+y^k}\left[\dfrac{1+|\log(by)|}{|\log b|}{\bf 1}_{2\leq y\leq \frac{B_0}{2}}+\dfrac{1}{b^2y^2|\log b|}{\bf 1}_{2B_1 \geq y\geq \frac{B_0}{2}}+\dfrac{\log(M)+|\log (1+ y)|}{1+y^2}{\bf 1}_{ y\leq 2 B_1}\right] \\[8pt]
+& \dfrac{b^2}{(1+y^{4+k})}{\bf 1}_{ y\geq  B_1/2},  
\end{array}
\end{multline}

for some constant 
\be
\label{computationcb}
c_b=\frac{1}{2|\log b|}\left(1+O\left(\frac{1}{|\log b|}\right)\right).
\ee	
\end{proposition}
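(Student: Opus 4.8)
The plan is to reduce the construction of $T_1$ to solving a single second order linear ODE dictated by the explicit spectral structure of $H=-\Delta-3Q^2$, and then to insert the resulting pointwise bounds into the expansion of $\Psi_{B_1}$. Inserting $P=Q+b^2\sigma$ into \fref{voehoeigr} and using $-\Delta Q=f(Q)$, $f'(Q)=3Q^2$, the $O(b^2)$ term is $H\sigma=-D\Lambda Q=-\Phi$; the exact solution $\sigma=\tfrac14|y|^2Q$ is bounded but does \emph{not} decay (it tends to $2$) precisely because of the non vanishing flux \fref{degenfond}, so that $b^2\sigma\not\ll Q$ past the light cone. I therefore solve instead
\be
HT_1=-\Phi+c_b\chi_{\frac{B_0}{4}}\Lambda Q, \qquad c_b=\frac{(\Phi,\Lambda Q)}{(\chi_{\frac{B_0}{4}}\Lambda Q,\Lambda Q)},
\ee
the value of $c_b$ being the unique one making the right hand side $g_b$ satisfy $\int g_b\,\Lambda Q\,y^3dy=0$; since $\tfrac12y^4|\Lambda Q|^2\to 32$ one has $(\chi_{\frac{B_0}{4}}\Lambda Q,\Lambda Q)=64|\log b|+O(1)$, whence \fref{computationcb}. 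On radial functions $H$ is the ODE operator whose kernel is spanned by the regular resonance $\Lambda Q\sim -8/y^2$ and a second solution $\Gamma$ produced by reduction of order, $\Gamma=\Lambda Q\int^y \frac{ds}{s^{N-1}(\Lambda Q)^2}$, which is singular like $y^{-2}$ at the origin, bounded at infinity, and smooth across the zero of $\Lambda Q$ at $y=2\sqrt2$; this lets me solve for $T_1$ by variation of parameters.

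\emph{The profile $T_1$ and the bound \fref{esttone}.} Choosing the lower limits in the variation of parameters formula so that the particular solution $T_1^{part}$ is smooth at $0$ (legitimate since $\Gamma g_by^3$ and $\Lambda Q\,g_by^3$ are integrable at $0$), I set $T_1=T_1^{part}+d\,\Lambda Q$ and fix $d$ by \fref{orthotone}, using $(\Lambda Q,\chi_M\Phi)\to 32\ne0$; the contribution of $T_1^{part}\approx\tfrac14 y^2Q$ in the inner zone $y\lesssim M$ to this normalisation is what generates the $\log M$ terms. The estimate \fref{esttone} is then obtained by asymptotic analysis of $\Gamma(y)\int_0^y\Lambda Q\,g_b\,s^3ds$ and $\Lambda Q(y)\int_0^y\Gamma\,g_b\,s^3ds$ in the three regions $y\lesssim 1$, $1\lesssim y\lesssim B_0/2$ and $y\gtrsim B_0/2$: on the bulk $2\le y\le B_0/2$ the term $c_b\Lambda Q$ in $g_b$ produces the $\frac{1+|\log(by)|}{|\log b|}$ profile, while for $y\gtrsim B_0/2$ the cancellation $\int g_b\Lambda Q\,s^3=0$ kills the bounded part and leaves exactly the $\frac{1}{b^2y^2|\log b|}$ tail that makes $b^2T_1\ll Q$ there; $y$-differentiation only adds the gains $y^{-k}$ because $\Gamma,\Lambda Q,\Phi$ are rational-type functions with the obvious derivative bounds. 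The substantive part of the whole proof lies here: propagating the \emph{precise} logarithmic shapes through these integrals, uniformly in $b$, in $M$, and in the number of derivatives, while keeping track of the (harmless but ever-present) zero of $\Lambda Q$ at $y=2\sqrt2$ in the definition of $\Gamma$.

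\emph{The $b$-derivative bound \fref{estpbcut}.} I differentiate $P_{B_1}=Q+\chi_{B_1}b^2T_1$ in $b$: the cut-offs contribute $\partial_b\chi$-terms localised at $y\sim B_1$ and $y\sim 1/b$, one has $\partial_bc_b=\frac{1}{2b|\log b|^2}(1+o(1))$ from the explicit formula, and $\partial_bT_1$ solves $H\partial_bT_1=\partial_bg_b$ with $\partial_bg_b$ of size $b^{-1}$ times the bracket in \fref{esttone}, so $\partial_bT_1$ obeys the same bound with a loss $b^{-1}$; this yields $|\partial_b(\chi_{B_1}b^2T_1)|\lesssim b\times(\text{bracket})$ supported on $y\le 2B_1$, and likewise for the $y$-derivatives.

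\emph{The error $\Psi_{B_1}$, \fref{estpsibloin}.} Expanding \fref{defpsib} with $P_{B_1}=Q+w$, $w=\chi_{B_1}b^2T_1$, and using $-\Delta Q=f(Q)$, $f(Q+w)-f(Q)=f'(Q)w+3Qw^2+w^3$, one gets
\be
\Psi_{B_1}=b^2\big[\chi_{B_1}HT_1+D\Lambda Q\big]-b^2\big(2\na\chi_{B_1}\!\cdot\!\na T_1+T_1\Delta\chi_{B_1}\big)+b^4D\Lambda(\chi_{B_1}T_1)-3Qw^2-w^3,
\ee
and since $\chi_{B_1}\equiv1$ on $\mathrm{supp}\,\chi_{\frac{B_0}{4}}$ the first bracket equals $(1-\chi_{B_1})\Phi+c_b\chi_{\frac{B_0}{4}}\Lambda Q$, which isolates the leading term $c_bb^2\chi_{\frac{B_0}{4}}\Lambda Q$ subtracted in \fref{estpsibloin}. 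The remainder is then bounded term by term: $b^2(1-\chi_{B_1})\Phi$ lives on $y\ge B_1$ with $|\Phi|\lesssim y^{-4}$ (the last line of \fref{estpsibloin}); the commutator terms live on $y\sim B_1$ and are controlled by the first line via \fref{esttone}; $b^4D\Lambda(\chi_{B_1}T_1)\lesssim b^4\times(\text{bracket})$ on $y\le 2B_1$ is exactly the first line; and $3Qw^2+w^3\lesssim b^4QT_1^2+b^6|T_1|^3$ is absorbed there too since $Q\lesssim(1+y^2)^{-1}$ and the bracket is $\lesssim \log M+|\log(1+y)|$. The $y$-derivatives are handled by the same estimates with the extra decay $y^{-k}$. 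Thus, once \fref{esttone} is established, \fref{estpbcut} and \fref{estpsibloin} are essentially bookkeeping, and the main obstacle remains the sharp asymptotic analysis of the two convolution integrals defining $T_1$.
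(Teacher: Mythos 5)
Your proposal follows essentially the same route as the paper: the same modified right-hand side $-\Phi+c_b\chi_{\frac{B_0}{4}}\Lambda Q$ with $c_b$ fixed by orthogonality to $\Lambda Q$ (giving \fref{computationcb}), inversion of $H$ by variation of parameters with the explicit kernel $\{\Lambda Q,\Gamma\}$, correction by a multiple of $\Lambda Q$ to enforce \fref{orthotone} (source of the $\log M$ terms), the same region-by-region asymptotics exploiting $(F,\Lambda Q)=0$ past $y\sim B_0$, and the same expansion of $\Psi_{B_1}$ isolating $c_bb^2\chi_{\frac{B_0}{4}}\Lambda Q$, the cut-off commutators at $y\sim B_1$, the outer $b^2(1-\chi_{B_1})\Phi$ tail, and the $\partial_b$ bookkeeping via $\partial_bc_b$ and $\partial_bT_1$. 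This matches the paper's proof in both structure and all key estimates.
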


{\bf Proof of Proposition \ref{propqb}}\\

\noindent{\bf Step 1} Inversion of $H$.\\

The first green function of $H$ is given from scaling invariance by 
\be
\label{deflambdaq}
\Lambda Q(y)= \frac{N-2}{2\left(1+ \frac{y^2}{N(N-2)}\right)^{\frac{N}{2}}}\left(1 - \frac{y^2}{N(N-2)}\right), 
\ee which admits the following asymptotics:
\be
\label{ezpansionLQ}
\forall \ k\geq 0, \ \ \frac{d^k (\Lambda Q)}{dy^k}(y)=\left\{\begin{array}{lll}
O(1) & \mbox{as} \ \  y \to 0,\\ O(y^{-(N-2 + k)})  & \mbox{as} \ \ y \to \infty
\end{array}
\right .
\ee
Let now 
$$
\Gamma(y)= - \Lambda Q (y)\int_{1}^y\frac{ds}{s^{N-1}(\Lambda Q)^2(s)},
$$ 
be another (singular at the origin\footnote{Note that $\Gamma$ must be smooth at $y=\sqrt{N(N-2)}$ where $\Lambda Q$ vanishes from the radial ODE $H\Gamma=0$}) element of the kernel of $H$ which can be found from the Wronskian relation: 
$$\Gamma' \Lambda Q- \Gamma (\Lambda Q)'=\frac{-1}{y^{N-1}}.
$$
From this we easily find the asymptotics of $\Gamma^{(k)}$ for any integer $k$: 
\be
\label{ezpansionH}
\dfrac{d^k \Gamma}{dy^k}(y) =\left\{ \begin{array}{ll} O(y^{-(N -2 + k)}) \ \ \mbox{as}  \ \ y \to 0\\
 O(y^{-k}) \ \  \mbox{as} \ \ y\to \infty.	
\end{array}
\right .
\ee
A smooth solution to $Hw=F$ is given by: 
 \be
 \label{defulinearsolver}
 w(y)=\Gamma(y)\int_0^yF(s)\Lambda Q(s)s^{N-1}ds - \Lambda Q(y)\int_{0}^yF(s)\Gamma(s)s^{N-1}ds.
 \ee
 
 We now look for  a solution to the self similar equation in the form $Q+b^2T_1.$ This yields:
 \bea
 \label{computationpsib}
  \Psi_b& =&-\Delta Q_b+b^2D\Lambda Q_b-f(Q_b)\\[4pt]
 \nonumber & = & b^2(HT_1+D\Lambda Q)+b^4D\Lambda T_1-\left[f(Q+b^2T_1)-f(Q)-b^2f'(Q)T_1\right].
 \eea

 \noindent{\bf Step 2} Computation of $T_1$.\\
 
Thanks to the anomalous decay \fref{cancelaationphi}, we chose $T_1$ solution to 
 \be
 \left\{
 \begin{array}{rcl}
 \label{eqtone}
 HT_1 & = & F=-D\Lambda Q+c_b\chi_{\frac{B_0}{4}}\Lambda Q, \\ 
(T_1,\chi_M \Phi) &=& 0,
\end{array}
\right.
 \ee
 with $c_b$ chosen such that: 
 \be
 \label{eqcb}
 (F,\Lambda Q)=0
 \ee \emph{i.e.} from Pohozaev integration by parts formula,
  see \eqref{defbnot} and \eqref{degenfond} ,
 \bee
 \nonumber c_b & = & \frac{(D\Lambda Q,\Lambda Q)}{(\chi_{\frac{B_0}{4}}\Lambda Q,\Lambda Q)}=  \frac{1}{2}\frac{\lim_{y\to+\infty}y^4|\Lambda Q(y)|^2}{\int\chi_{\frac{B_0}{4}}|\Lambda Q|^2}\\
 & = & \frac{1}{2|\log b|}\left(1+O\left(\frac{1}{|\log b|}\right)\right) \ \ \mbox{as} \ \ b\to 0.
 \eee
 This yields \fref{computationcb}. Following \fref{defulinearsolver}, we first consider
 \be
 \label{defttilde}
 \tilde{T}_1(y)=\Gamma(y)\int_0^yF(s)\Lambda Q(s)s^{3}ds - \Lambda Q(y)\int_{0}^yF(s)\Gamma(s)s^{3}ds 
 \ee 
The  smoothness of $\tilde{T}_1$ at the origin follows from \fref{defttilde} together with elliptic regularity from \fref{eqtone}. We now examine the behavior of $\tilde{T}_1$ at large $y$. 

We first observe that, from the orthogonality \fref{eqcb}: 
$$
\tilde{T}_1(y)  = -\left[ \Gamma(y)\int_y^{+\infty}F(s)\Lambda Q(s)s^{3}ds +\Lambda Q(y)\int_{0}^yF(s)\Gamma(s)s^{3}ds\right]
$$

Hence, from the degeneracy $|D\Lambda Q|=O(y^{-4}),$ this yields that, for $\frac{B_0}{2}\leq y\leq \frac{1}{b^2}$:

\bea
\label{boundttilde}
|\tilde{T}_1(y)|\nonumber & \lesssim & \int_y^{+\infty}\frac{s^3}{(1+s^4)(1+s^2)}ds+\frac{1}{y^2}\left[\int_0^y\frac{1+s^3}{1+s^4}ds+|c_b|\int_0^{B_0}\frac{s^3}{1+s^2}ds\right]\\
& \lesssim & \frac{|\log (1+y)|}{ 1 + y^2}+\frac{1}{b^2y^2|\log b|}.
\eea
similarly, for $1\leq y\leq \frac{B_0}{2}$, 
\bea
\label{boundttildebis}
\nonumber |\tilde{T}_1(y)| & = & \left|\Gamma(y)\int_y^{+\infty}F(s)\Lambda Q(s)s^{3}ds +\Lambda Q(y)\int_{0}^yF(s)\Gamma(s)s^{3}ds\right|\\
\nonumber & \lesssim & \int_y^{+\infty}\frac{s^3}{(1+s^4)(1+s^2)}ds+|c_b|\int_y^{B_0} \frac{s^3}{(1+s^2)^2}ds\\
\nonumber & + & \frac{1}{1+y^2}\left[\int_0^y\frac{s^3}{1+s^4}ds+|c_b|\int_0^y\frac{s^3}{1+s^2}ds\right]\\
& \lesssim & \frac{1+|\log (by)|}{|\log b|}+\frac{|\log (1+y)|}{1+y^2}.
\eea
We now choose thanks to \fref{degenfond}:
 $$ T_1(y)=\tilde{T}_1(y)-c\Lambda Q  \ \ \mbox{with} \ \  c=\frac{(\tilde{T}_1,\chi_M\Phi)}{(\chi_M\Phi,\Lambda Q)} $$
 so that the orthogonality condition \fref{orthotone} is fulfilled. 
 We note that so that the bounds \fref{boundttilde} and \fref{boundttildebis} 
 ensure that $c$ remains bounded by $\log(M)$ uniformly in $M$ and $b,$ provided $b$ is chosen sufficiently small w.r.t. $M.$  
 
This yields \fref{esttone} for $k=0$, the other cases follow similarly.\\

 {\bf Step 4} Estimate on $\Psi_{B_1}$ and $\partial_b \Psi_{B_1}$\\
 
 We now cut off the slow decaying tail $T_1$ according to \fref{defpb} and estimate the corresponding error to self similarity $\Psi_{B_1}$ given by \fref{defpsib}. 
 
 We compute: 
 
 \bee
 \Psi_{B_1}  &= & b^2\chi_{B_1}(HT_1+D\Lambda Q)\\
 	&& \ + \ b^2\left[-2\chi_{B_1}'T'_1-T_1\Delta \chi_{B_1}+(1-\chi_{B_1})D\Lambda Q+b^2D\Lambda(\chi_{B_1}T_1)\right]\\
 && \ - \ \left[  f(Q+b^2\chi_{B_1}T_1)-f(Q)-\chi_{B_1}f'(Q)T_1\right].
\eee 

Outside the support of $\chi_{B_1}$ we have thus $\Psi_{B_1} = b^2 D\Lambda Q.$ 
On the other hand, in dimension $N=4$, we have the Taylor expansion :
 $$
 f(Q+b^2\chi_{B_1}T_1)-f(Q)-\chi_{B_1}f'(Q)T_1 = b^4\chi_{B_1}^2T^2_1(y) \int_{0}^1 (1-\tau) (Q(y) + \tau b^2 \chi_{B_1}T_1(y)) d\tau.
 $$

We thus estimate from \fref{esttone}, \fref{computationpsib}, \fref{eqtone} and the degeneracy \fref{cancelaationphi}  for $y\leq 2B_1:$
 \begin{multline*}
 \left|\Psi_{B_1} - b^2 c_b  \chi_{\frac{B_0}{4}} \Lambda Q \right| 
  \lesssim  b^2\mathbf{1}_{y\geq B_1/2}\left( \dfrac{T'_1}{1+y} + \dfrac{T_1}{1+y^2} + \dfrac{1}{1+y^4}\right)\\
   +  b^4  |D\Lambda (\chi_{B_1}T_1)|+b^4|T^2_1(y)|\int_{0}^1 (1-\tau) |Q(y) + \tau b^2 T_1(y)| d\tau.
 \end{multline*}
\fref{esttone} now yields \fref{estpsibloin} for $k=0.$
 Further derivatives are estimated similarly thanks to the smoothness of the nonlinearity. 
We emphasize here that, given $B>0$ large, we have $ 1/(1+y) \lesssim 1/B \lesssim 1/(1+y)$
 on the support of $\chi'_{{B}}$, so that differentiating $\chi_{B}$ acts as a multiplication by $1/(1+y).$ Furthermore, there holds $1/B_1 = o(b)$ so that we can always
 dominate $1/(1+y)$ by $b$ on the support of $\chi'_{B_1}.$

Finally, we compute $\partial_b P_{B_1}$ from \fref{defpb}.
 
To this end, we note that $\partial_b c_b = O(1/b|\log(b)|^2)$ when $b\to 0$ so that the source term for $T_1$ 
in \eqref{eqtone} satisfies 
$$
\partial_b F = \left[ O\left(\frac{1}{b |\log b|}\right) \chi_{B_0/4} + O\left(\frac{1}{b|\log b|}\right) \rho_{B_0/4} \right] \Lambda Q
$$
where $\rho(z) = z \chi'(z) \in \mathcal{C}^{\infty}_c(0,\infty)$ and we keep the convention for function dilation.
Hence, the same arguments as for ${T}_1$ enable to show that $\partial_b \tilde{T}_1$ and then $\partial_b T_1$ satisfy the estimates:
\be
\label{esttbine}
\left|  \dfrac{d^k \partial_b T_1}{dy^k}(y) \right|
\lesssim \frac{1}{b(1+y^k)}\left[\frac{1+|\log(by)|}{|\log b|}{\bf 1}_{2\leq y\leq \frac{B_0}{2}}+\frac{1}{b^2y^2|\log b|}{\bf 1}_{y\geq \frac{B_0}{2}}+\frac{1+|\log (1+y)|}{1+y^2}\right].
\ee
Finally, we compute from \eqref{defpb}
\be\label{decompdbT1}
\partial_b P_{B_1}=2b\chi_{B_1}T_1+b^2 \partial_b \log(B_1) \rho_{B_1} T_1+ b^2 \chi_{B_1} \partial_b T_1.
\ee
This decomposition together with \fref{esttone} and the previous computation yield \fref{estpbcut}.

\noindent This concludes the proof of Proposition \ref{propqb}.


\section{Description of the trapped regime}
\label{sectionthree}


We display in this section the regime which leads to the blow up dynamics described by Theorem \ref{thmmain}.\\


\subsection{Modulation of solutions to \eqref{wave}}


Let us start with describing the set of  solutions among which the finite time blow up scenario described by Theorem \ref{thmmain} is likely to arise. 
We recall from \fref{defmupsi} that $\psi$ denotes the bound state of $H$ with eigenvalue $-\zeta<0$. 
The following lemma is a standard consequence of the implicit function theorem and the smoothness of the flow, see Appendix \ref{app_modulation}.

\begin{lemma} [Modulation theory]
\label{definitionadmissible}
Let $M$ be a sufficiently large constant to be chosen later and $0<b_0<b_0^*(M)$ small enough. Let $(\eta_0,\eta_1,d_+)$ satisfying the smallness
condition:
\be
\label{defmotionintia}
|d_+|+\|\eta_0,\nabla \eta_0,\eta_1+b_0(1-\chi_{B_1(b_0)})\Lambda Q,\nabla \eta_1\|_{\dot{H}^1\times\dot{H^1}\times L^2\times L^2}\lesssim \frac{b_0^2}{|\log b_0|},
\ee
then, there exists a time $T_0$ such that the unique solution $u \in \mathcal{C}^{2}([0,T_0];L^2(\mathbb R^N)) \cap \mathcal{C}([0,T_0]; H^2(\mathbb R^N))$
to \eqref{wave} with initial data :
\be
\label{intiialdtata}
u_0=P_{B_1(b_0)}+\eta_0+d_+\psi,\ \ u_1=b_0\Lambda P_{B_1(b_0)}+\eta_1,
\ee
 admits  on $[0,T_0]$ a unique decomposition 
\be
\label{decompuintro}
u(t)=(P_{B_1(b(t))}+\e(t))_{\lambda(t)}
\ee 
with \\
{\em 1.} $\lambda\in \mathcal C^2([0,T_0],\Bbb R^*_+)$ with  
\be
\label{orthoeetb}
\forall \ t\in [0,T_0], \ \ (\e(t),\chi_M\Phi)=0\ \ \mbox{and} \ \ b(t) = - \lambda_t ;\ \ 
\ee
{\em 2.} there holds the smallness:
\be
\label{initialbounds}
\|\nabla \varepsilon(t)\|_{L^2} \lesssim  b_0|\log b_0|\qquad  
|b(t) - b_0| + |\lambda(t)-1|+ \|\nabla^2 \varepsilon(t) \|_{L^2} \lesssim \dfrac{b_0^2}{|\log b_0|} \quad \forall \ t \in [0,T_0].
\ee
\end{lemma}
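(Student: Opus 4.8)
The plan is to set up the decomposition \eqref{decompuintro} via the implicit function theorem applied to the single scalar orthogonality condition $(\e,\chi_M\Phi)=0$, and then propagate the bounds by a continuity/bootstrap argument. First I would record that local well-posedness of \eqref{wave} in $H^2\times H^1$ (subcritical in this topology, since $N=4$ and $f$ is smooth) gives the asserted regularity class $u\in\mathcal C^2([0,T_0];L^2)\cap\mathcal C([0,T_0];H^2)$ for some $T_0>0$; this uses that the initial data \eqref{intiialdtata} lies in $H^2\times H^1$ — note that $P_{B_1(b_0)}-Q$, $\psi$, $\eta_0$ are all in $H^2$ and $\Lambda P_{B_1}$, $\eta_1$ in $H^1$, with the caveat that $b_0\Lambda Q$ barely fails $L^2$ in $N=4$, which is exactly why the hypothesis \eqref{defmotionintia} is phrased with the correction $\eta_1+b_0(1-\chi_{B_1(b_0)})\Lambda Q$ so that $u_1$ genuinely sits in $L^2$.

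Next I would construct the decomposition. Consider the map $(\lambda,b,v)\mapsto \big((v)_{1/\lambda}-P_{B_1(b)},\chi_M\Phi\big)$ together with the normalization fixing $b$; more precisely, given $u$ close to $(P_{B_1(b_0)})_{1}$ in $\dot H^1$, one seeks $\lambda$ near $1$ such that $\e:=(u)_{1/\lambda}-P_{B_1(b(\lambda))}$ is orthogonal to $\chi_M\Phi$, where $b$ is then \emph{defined} by $b=-\lambda_t$ after differentiating in time. The key nondegeneracy making the implicit function theorem applicable is that the derivative of $(\,(v)_{1/\lambda},\chi_M\Phi)$ in $\lambda$ at the soliton equals $-(\Lambda Q,\chi_M\Phi)=-(\Lambda Q,\chi_M D\Lambda Q)$, which is nonzero for $M$ large by \eqref{degenfond} (the flux integral $(\Phi,\Lambda Q)=32>0$ is only mildly cut by $\chi_M$). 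This yields $\lambda\in\mathcal C^2$, and then $b=-\lambda_t\in\mathcal C^1$, on a possibly shorter interval still called $[0,T_0]$. At $t=0$ one checks $|\lambda(0)-1|+|b(0)-b_0|\lesssim b_0^2/|\log b_0|$ by comparing with the explicit form of the data \eqref{intiialdtata} and using \eqref{defmotionintia}, and the modulation equation for $\lambda_t$ (obtained by differentiating the orthogonality relation and using \eqref{wave}) to control $b$.

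Finally I would propagate the three bounds in \eqref{initialbounds}. At $t=0$ they hold with a constant strictly better than the bootstrap constant, using \eqref{defmotionintia}, the estimates \eqref{esttone}–\eqref{estpbcut} on $P_{B_1}$ and $\partial_b P_{B_1}$ (so that $\e(0)=\eta_0+d_+\psi+(P_{B_1(b_0)}-P_{B_1(b(0))})$ and its gradients are controlled — note the loss in $\|\nabla\e\|_{L^2}\lesssim b_0|\log b_0|$ comes precisely from the slowly decaying tail $b_0\Lambda Q$ outside $B_1$, which has $\dot H^1$ norm of that size in $N=4$), and then by continuity of $t\mapsto(\lambda(t),b(t),\e(t))$ in the respective topologies these bounds persist on a nontrivial time interval; shrinking $T_0$ if necessary gives the claim. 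The main obstacle here is really bookkeeping rather than conceptual: one must carefully track that the delicate, just-barely-borderline $L^2$/$\dot H^1$ behavior of the $\Lambda Q$ tail in dimension four is absorbed consistently in the initial-data hypothesis and in the definition of $b$ via $b=-\lambda_t$, and that the cutoff $\chi_M$ in the orthogonality condition does not degrade the nondegeneracy constant as $b_0\to0$ — both are handled by taking $M$ large first and then $b_0<b_0^*(M)$ small, exactly as in the statement. Since the lemma is explicitly flagged as "a standard consequence of the implicit function theorem and the smoothness of the flow," I would keep the argument brief and relegate the routine continuity estimates to Appendix \ref{app_modulation}.
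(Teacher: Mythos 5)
Your proposal is correct in outline and follows essentially the same route as the paper: local well-posedness in $H^2\times H^1$, a one-parameter modulation via the implicit function theorem with nondegeneracy $(\Lambda Q,\chi_M\Phi)\neq 0$ coming from \fref{degenfond}, the definition $b=-\lambda_t$, explicit initial estimates from Proposition \ref{propqb} and \fref{defmotionintia}, and propagation of \fref{initialbounds} on a short interval by continuity of the flow.

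The one point where your formulation is weaker than the paper's is the way you set up the implicit function theorem: you seek $\lambda$ so that $\e=(u)_{1/\lambda}-P_{B_1(b)}$ is orthogonal to $\chi_M\Phi$ while $b$ is ``then defined'' by $b=-\lambda_t$, which as written is circular (the profile you modulate against depends on a quantity only defined after the modulation). The paper avoids this by first decomposing $u=(Q+\tilde\e)_\lambda$ with $(\tilde\e,\chi_M\Phi)=0$ (a genuinely one-parameter problem), then setting $b=-\lambda_t$, checking via the explicit data that $b(0)=b_0+O(b_0^2/|\log b_0|)>0$ so that $P_{B_1(b)}$ makes sense, and finally passing to $\e=\tilde\e-(P_{B_1(b)}-Q)$; the orthogonality $(\e,\chi_M\Phi)=0$ survives because $(T_1,\chi_M\Phi)=0$ by \fref{orthotone}, so the condition is in fact independent of $b$. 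Your argument is salvaged by exactly this observation, but you should state it explicitly (and record the positivity of $b$, needed for $B_1(b)$ to be defined) rather than leave it as bookkeeping.
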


\begin{remark} Recall that the slow decay of $Q$ and the choice of $P_{B_1}$ induces an unbounded tail of $P_{B_1}$ in the energy norm, and more specifically $\|\Lambda Q\|_{L^2}=+\infty$, hence the need for the compensation in the norm for the time derivative in \fref{defmotionintia}.
\end{remark}


\subsection{Decomposition of the flow and modulation equations}
\label{geomdecomp}


Considering initial data satisfying the assumption of the above lemma, we now write the evolution equation induced by \fref{wave} in terms of the decomposition \fref{decompuintro}. 
Let 
\be
\label{defwou}
u(t,r)=\frac{1}{[\lambda(t)]^{\frac{N}{2}-1}}\left(P_{B_1(b(t))}+\e\right)(t,\frac{r}{\lambda(t)})=\left(P_{B_1(b(t))}\right)_{\lambda(t)}+w(t,r)
\ee 
where $b=-\lambda_t.$ Let us derive the equations for $w$ and $\e$. Let
\be
\label{defreacledtime}
s(t)=\int_0^t\frac{d\tau}{\lambda(\tau)} \ee
be the rescaled time. We shall make an intensive use of the following rescaling formulas: 
\be
\label{formularesacoling}
u(t,r)=\dfrac{1}{\lambda^{N/2-1}}v(s,y), \ \ y=\frac{r}{\lambda}, \ \ \frac{ds}{dt}=\frac{1}{\lambda},
\ee
\be
\label{dliatationone}
\partial_tu=\frac{1}{\lambda}\left(\partial_sv+b\Lambda v\right)_{\lambda},
\ee
\be
\label{dilationtowo}
\partial_{tt}u=\frac{1}{\lambda^2}\left[\partial_s^2v+b(\partial_sv+2\Lambda\partial_sv)+b^2D\Lambda v+b_s\Lambda v\right]_{\lambda}.
\ee
In particular, we derive from \fref{wave} the equation for $\e$:
\bea
\label{eqeqb}
\nonumber \partial_s^2\eb+H_{B_1}\eb & = & -\Psi_{B_1}-b_s\Lambda P_{B_1}-b(\partial_sP_{B_1}+2\Lambda\partial_sP_{B_1})-\partial^2_sP_{B_1}\\
& - & b(\partial_s\eb+2\Lambda\partial_s\eb)-b_s \Lambda \eb+N(\eb)
\eea
where, implicitly, $B_1 = B_1(b(t))$ and $H_{B_1}$ is the linear operator associated to the profile  $P_{B_1}$
\be
\label{defhbl}
H_{B_1} \eb=-\Delta \eb+b^2D\Lambda\eb-f'(P_{B_1})\eb,
\ee
and the nonlinearity:
\be
\label{defnw}
N(\eb)=f(P_{B_1}+\eb)-f(P_{B_1})-f'(P_{B_1})\eb.
\ee
Alternatively, the equation for $w$ takes the form:
$$\partial_t^2w+\tilde{H}_{B_1} w=-\left[\partial_t^2(P_{B_1})_{\lambda}-\Delta(P_{B_1})_{\lambda}-f((P_{B_1})_{\lambda})\right]+N_{\lambda}(w)$$
with 
\be
\label{defhbltwo}
\tilde{H}_{B_1} w=-\Delta w-f'((P_{B_1})_{\lambda})w,
\ee
\be
\label{defnwbis}
N_{\lambda}(w)=f((P_{B_1})_{\lambda}+w)-f((P_{B_1})_{\lambda})-f'((P_{B_1})_{\lambda})w.
\ee
We then expand using (\ref{dliatationone}), (\ref{dilationtowo}):
\bee
\partial_t^2(P_{B_1})_{\lambda}-\Delta(P_{B_1})_{\lambda}-f((P_{B_1})_{\lambda})& = & \frac{1}{\lambda^2}\left[\partial_{ss}P_{B_1}+b(\partial_sP_{B_1}+2\Lambda\partial_sP_{B_1})+b_s\Lambda P_{B_1}+\Psi_B\right]_{\lambda}\\
& = &  \frac{1}{\lambda^2}\left[b\Lambda\partial_sP_{B_1}+b_s\Lambda P_{B_1}+\Psi_B\right]_{\lambda}+\partial_t\left[\frac{1}{\lambda}(\partial_sP_{B_1})_{\lambda}\right]
\eee
and rewrite the equation for $w$:
\be
\label{eqwfinal}
\partial_t^2w+\tilde{H}_{B_1}w =   -\frac{1}{\lambda^2}\left[b\Lambda\partial_sP_{B_1}+b_s\Lambda P_{B_1}+\Psi_B\right]_{\lambda}-\partial_t\left[\frac{1}{\lambda}(\partial_sP_{B_1})_{\lambda}\right]+N_{\lambda}(w).
\ee
For most of our arguments we prefer to view the linear operator $H_{B_1}$ acting on $w$ in (\ref{eqwfinal}) as a perturbation of the linear operator $H_\lambda$ associated to $Q_\lambda$. Then
\bea
\label{oeioehoe}
\partial_t^2w+H_\lambda w&=&F_{B_1}\\
\nonumber &   = &   -\frac{1}{\lambda^2}\left[b\Lambda\partial_sP_{B_1}+b_s\Lambda P_{B_1}+\Psi_{B_1}\right]_{\lambda}-\partial_t\left[\frac{1}{\lambda}(\partial_s\qbt)_{\lambda}\right] \\
\nonumber & & -   \left[f'(Q_{\lambda})-f'((P_{B_1})_{\lambda})\right]w+N_{\lambda}(w)
\eea
with 
\be
\label{defh}
H_{\lambda} w=-\Delta w+f'(Q_{\lambda})w.
\ee


\subsection{The set of bootstrap estimates}

At first, we fix some notations. We introduce the energy  $\mathcal E(t)$ associated to the Hamiltonian $H_\lambda$: 
\be
\label{poitnwiseboundWbis}
\mathcal E(t)  =  \lambda^2\int\left[(H_{\lambda}\pa_tw,\partial_tw)+(H_{\lambda} w)^2\right].
\ee
Given $\zeta \in (0,\infty)$ the unstable eigenvalue, we set:
\be \label{defvplus}
V_+ =
\left| 
\begin{array}{c}
1 \\
\sqrt{\zeta}
\end{array}
\right. 
\qquad V_- = 
\left| 
\begin{array}{c}
1 \\
-\sqrt{\zeta}
\end{array}
\right. 
\ee
and, we introduce the decomposition of the unstable direction 
\be
\label{defaplus}
\left |\begin{array}{ll}(\e,\psi)\\(\pa_s\e,\psi)\end{array}\right .=\tilde{a}_+(s)V_++\tilde{a}_-(s)V_-
\ee 
Let us denote:
\be
\label{aplusamoins}
\kappa_+(s)=\tilde{a}_+(s)+\frac{b_s}{2\sqrt{\zeta}}(\partial_b P_{B_1},\psi), \ \ \kappa_-(s)=\tilde{a}_-(s)-\frac{b_s}{2\sqrt{\zeta}}(\partial_b P_{B_1},\psi).
\ee
We note that the vectors $V_+,V_-$ given by \fref{defvplus} yield an eingenbasis of 
$$\left(\begin{array}{ll} 0&1\\{\zeta}&0\end{array}\right)$$ 
and hence correspond respectively to the unstable and stable mode of the two dimensional dynamical system 
$$\frac{dY}{ds}=\left(\begin{array}{ll} 0&1\\{\zeta}&0\end{array}\right) Y$$ which to first order in $b$ is verified by the projection onto the unstable mode $(\e,\psi)$, see \fref{defunstbakle}. The deformation term $b_s(\partial_b P_{B_1},\psi)$ in \fref{aplusamoins} is present to handle some possible time oscillations induced by the $\partial_s^2P_{B_1}$ term in the RHS of \fref{eqeqb} which cannot be estimated in absolute value but will be proved to be lower order.\\

With these conventions,  we may now paramaterize the set of initial data described by Lemma \ref{definitionadmissible} by $a_+=\kappa_+(0)$, and then reformulate the initial smallness properties in terms of suitable initial bounds for $\e$, see Appendix \ref{app_modulation} for the proof which is standard.

\begin{lemma}[Inital parametrization of the unstable mode and initial bounds]
 \label{smalldata}
Let $M$ and $b_0$ be given as in Lemma \ref{definitionadmissible} and denote by $C(M)$
a sufficiently large constant. Then,  given $(\eta_0,\eta_1,a_+)$ satisfying 
\be
\label{defmotionintia2}
|a_+|+\|\eta_0,\nabla \eta_0,\eta_1+b_0(1-\chi_{B_1(b_0)})\Lambda Q,\nabla \eta_1\|_{\dot{H}^1\times\dot{H^1}\times L^2\times L^2}\leq \frac{b_0^2}{|\log b_0|},
\ee
there exists a unique $d_+$ with $|d_+| \lesssim b_0^2/|\log(b_0)|$ and $T_0>0$ such that the unique decomposition 
$$
u(t) = (P_{B_1(b(t))} + \varepsilon)_{\lambda(t)} = (P_{B_1(b(t))})_{\lambda(t)} + w(t),
$$ 
of the unique smooth solution $u$ to \eqref{wave} on $[0,T_0]$ with initial data \eqref{intiialdtata} satisfies the initialization
\be
\label{intialisaifjeo}
\kappa_+(0) = a_+, 
\ee
and the following smallness condition on $[0,T_0]:$
\begin{itemize}
\item Smallness and positivity of $b$:
\be
\label{controllambdabootsd}
0<b(t)<5b_0;
\ee
\item Pointwise bound on $b_s$:
\be
\label{poitwisebsbootsd}
|b_s(t)|^2\leq C(M)\frac{[b(t)]^{4}}{|\log b(t)|^{2}};
\ee
\item Smallness of the energy norm:
\be
\label{bootneergynormsd}
\|(\nabla w(t),\pa_tw(t)+\frac{b(t)}{\lambda(t)}((1-\chi_{B_1(b(t))})\Lambda Q))_{\lambda(t)}\|_{L^2\times L^2} \leq \sqrt{b_0};
\ee
\item {\it Global} $\dot{H}^2$ bound:
\be
\label{poitnwiseboundbootsd}
|\mathcal E(t)|\leq C(M)\frac{[b(t)]^{4}}{|\log b(t)|^{2}};
\ee
\item A priori bound on the stable mode: 
\be
\label{boundunstablebootsd}
|\kappa_-(t)|\leq (C(M))^{\frac18}\frac{[b(t)]^2}{|\log b(t)|}.
\ee
\item A priori bound of the unstable mode:
\be \label{boundinitialsd}
\ \  |\kappa_+(t)|\leq  2\frac{[b(t)]^{2}}{|\log b(t)|}.
\ee
\end{itemize}\end{lemma}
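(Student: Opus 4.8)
The plan is to deduce Lemma~\ref{smalldata} from Lemma~\ref{definitionadmissible} by reparametrizing the unstable direction through the implicit function theorem, and then to produce the time $T_0$ by a soft openness argument.

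First, I would apply Lemma~\ref{definitionadmissible} for each $d_+$ in the ball $|d_+|\le b_0^2/|\log b_0|$: since then $(\eta_0,\eta_1,d_+)$ meets the smallness \fref{defmotionintia}, we get a time $T_0>0$ together with the modulated decomposition \fref{decompuintro} on $[0,T_0]$, the orthogonality \fref{orthoeetb} and the bounds \fref{initialbounds}. In particular, at $t=0$ one has $b(0)=b_0+O(b_0^2/|\log b_0|)$, $\lambda(0)=1+O(b_0^2/|\log b_0|)$, $\|\nabla\e(0)\|_{L^2}+\|\nabla^2\e(0)\|_{L^2}\lesssim b_0^2/|\log b_0|$, and, reading $b_s(0)$ off the modulation equations of Section~\ref{geomdecomp} at $t=0$, $|b_s(0)|\lesssim b_0^2/|\log b_0|$. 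Because the wave flow is continuous on $H^2\times H^1$ and the modulation map of Lemma~\ref{definitionadmissible} is produced by the implicit function theorem, all of $b(0),\lambda(0),\e(0),\pa_s\e(0),b_s(0)$ are Lipschitz in $d_+$, with Lipschitz constant $O(b_0)$ apart from the explicit $d_+\psi$ sitting inside $\e(0)$.

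The core step is then to solve $\kappa_+(0)=a_+$ for $d_+$. Writing $\e(0)=\eta_0+d_+\psi+R_0$ with $\|R_0\|_{\dot H^1\cap\dot H^2}\lesssim (b_0^2/|\log b_0|)(|b(0)-b_0|+|\lambda(0)-1|+|d_+|)$, expressing $\pa_s\e(0)$ via \fref{dliatationone} and the data \fref{intiialdtata}, and inserting into \fref{defaplus}--\fref{aplusamoins}, I expect to obtain
$$
\kappa_+(0)=\tfrac12\|\psi\|_{L^2}^2\,d_+ + G(d_+),
$$
where $G$ collects the contributions of $(\eta_0,\eta_1)$, of $b_0\Lambda P_{B_1}$, of the modulation correction $R_0$, and of the $b_s$--shift in \fref{aplusamoins}. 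Two algebraic facts enter here: the orthogonality $(\Lambda Q,\psi)=0$, which follows from $H\Lambda Q=0$ and $H\psi=-\zeta\psi$ in \fref{defmupsi} and removes the non--$L^2$ leading part of the data when paired against the localized $\psi$; and the exact cancellation of the two terms $\pm\,b_s(0)(\pa_b P_{B_1},\psi)$ arising respectively from $\pa_s\e(0)$ and from the very definition of $\kappa_+$ in \fref{aplusamoins}. Granting $|G(0)|\lesssim b_0^2/|\log b_0|$ (the tail $(1-\chi_{B_1})\Lambda Q$ pairs against $\psi$ with negligible error) and $G$ Lipschitz in $d_+$ with constant $O(b_0)$, the map $d_+\mapsto\kappa_+(0)$ is a strictly increasing homeomorphism near $0$, and a contraction/fixed--point argument produces a unique $d_+$ with $|d_+|\lesssim b_0^2/|\log b_0|$ and $\kappa_+(0)=a_+$, i.e. \fref{intialisaifjeo}.

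Finally, with this $d_+$ fixed, I would check that every bootstrap quantity lies \emph{strictly} inside the region \fref{controllambdabootsd}--\fref{boundinitialsd} at $t=0$: $0<b(0)<5b_0$; $|b_s(0)|^2\le\tfrac12 C(M)b_0^4/|\log b_0|^2$; the compensated energy norm $\lesssim b_0^2/|\log b_0|\ll\sqrt{b_0}$ using the compensation built into \fref{defmotionintia2}; $|\kappa_-(0)|\lesssim b_0^2/|\log b_0|$; and $|\kappa_+(0)|=|a_+|\le b_0^2/|\log b_0|<2b_0^2/|\log b_0|$, all immediate from the two previous steps once $C(M)$ is taken large. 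For $\mathcal{E}(0)$ in \fref{poitnwiseboundWbis} one uses that it is a quadratic functional of $(\nabla w,\pa_t w)$ and their first derivatives in which the cancellation $H_\lambda\Lambda Q=0$ discards the leading $O(b_0)$ contribution of $\pa_t w(0)\simeq b_0\Lambda P_{B_1}+\eta_1$, so that the remainder is controlled by the profile estimates \fref{esttone}--\fref{estpbcut} of Proposition~\ref{propqb} and the $H^2\times H^1$ smallness, giving $|\mathcal{E}(0)|\lesssim b_0^4/|\log b_0|^2$. Since $b,\lambda\in\mathcal{C}^2$, $\e\in\mathcal{C}([0,T_0];H^2)$ and $\pa_s\e$ is continuous into $L^2$ modulo the fixed tail, all these quantities depend continuously on $t$, so the non--strict inequalities \fref{controllambdabootsd}--\fref{boundinitialsd} persist on a possibly smaller $[0,T_0]$, which concludes the proof. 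The main difficulty is the core step: quantifying the $d_+$--dependence of the modulation parameters coming out of Lemma~\ref{definitionadmissible} and exploiting the cancellations $(\Lambda Q,\psi)=0$ and the $b_s$--cancellation of \fref{aplusamoins} to see that $\kappa_+(0)$ is essentially the affine function $\tfrac12\|\psi\|_{L^2}^2 d_+ + G(0)$ of $d_+$; the companion estimate on $\mathcal{E}(0)$, which again rests on $H\Lambda Q=0$ to discard the non--$L^2$ part of the initial velocity, is the other point requiring care.
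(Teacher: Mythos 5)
Your overall route is the paper's: apply Lemma \ref{definitionadmissible} for each admissible $d_+$, read the initial bounds off the decomposition (including $b_s(0)$ from the modulation identity and $\partial_s\e(0)$ from the explicit time-differentiation), solve $\kappa_+(0)=a_+$ by showing that $d_+\mapsto\kappa_+(0)$ has slope $\tfrac12\|\psi\|_{L^2}^2+O(b_0)$ (the paper invokes the implicit function theorem where you use a monotonicity/contraction argument, which is the same idea), and conclude by continuity in time. The two cancellations you isolate, $(\Lambda Q,\psi)=0$ and the disappearance of $b_s(0)(\partial_bP_{B_1},\psi)$ in the combination $\partial_s\e(0)+b_s(0)\partial_bP_{B_1}$, are exactly the ones the paper exploits in Appendix \ref{app_modulation}.

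Two intermediate claims are inaccurate, and the first sits in the step you yourself call the core. First, $\lambda(0)$ and $b(0)$ are \emph{not} Lipschitz in $d_+$ with constant $O(b_0)$: differentiating the orthogonality $((u_0)_{1/\lambda(0)}-Q,\chi_M\Phi)=0$ in $d_+$ gives $d\lambda\simeq-((\psi)_{1/\lambda(0)},\chi_M\Phi)/((\Lambda u_0)_{1/\lambda(0)},\chi_M\Phi)$, which is in general only $O(1)$ (depending on $M$), and similarly $db(0)=O(1)$; consequently your remainder $R_0$ in $\e(0)=\eta_0+d_+\psi+R_0$ has size of order $|\lambda(0)-1|+|b(0)-b_0|/|\log b_0|$ (linear, not the quadratic bound you wrote), and its $d_+$-derivative is $O(1)$ in $\dot H^1$, containing a term proportional to $d\lambda\,\Lambda Q$. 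The slope computation still closes, but only because these $O(1)$ variations enter $\kappa_+(0)$ through pairings with the exponentially localized $\psi$, where $(\Lambda Q,\psi)=0$, $|\lambda(0)-1|\lesssim b_0^2/|\log b_0|$ and $(\partial_bP_{B_1},\psi)=O(b_0)$ produce the $O(b_0)$ gain; this is precisely the explicit computation of $d\lambda$, $db(0)$, $dps_1(0)$, $dps_2(0)$ in the paper, and it cannot be replaced by a blanket ``$O(b_0)$-Lipschitz'' assertion. Second, your mechanism for $\mathcal E(0)$ is misstated: $\partial_tw(0)$ is not $\simeq b_0\Lambda P_{B_1}+\eta_1$, and no appeal to $H_\lambda(\Lambda Q)_\lambda=0$ is needed there. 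Since $u_1=b_0\Lambda P_{B_1(b_0)}+\eta_1$ matches the time derivative of the modulated bulk $(P_{B_1(b(t))})_{\lambda(t)}$ at $t=0$ to leading order, the $b_0\Lambda P_{B_1}$ tails cancel inside $\partial_tw(0)=u_1-\bigl(\tfrac{b_s(0)}{\lambda(0)}\partial_bP_{B_1(b(0))}+\tfrac{b(0)}{\lambda(0)}\Lambda P_{B_1(b(0))}\bigr)_{\lambda(0)}$, which is what gives $\|\nabla\partial_tw(0)\|_{L^2}\lesssim b_0^2/|\log b_0|$ as in \fref{initiale0_2}; moreover, to bound the $(H_\lambda w)^2$ part of $\mathcal E(0)$ you need, besides $\dot H^2$ smallness, the weighted control $\|\e(0)/(1+y^4)\|_{L^2}\lesssim b_0^2/|\log b_0|$ of \fref{initiale0_1}, since $H\e(0)$ contains the potential term $V\e(0)$. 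With these two points repaired along the paper's lines, your argument coincides with the paper's proof.
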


We may now describe the bootstrap regime as follows:

\begin{definition}[Exit time]
\label{defexittime}
Let $K(M)$ denote some large enough constant.

Given $a_+\in[-\frac{b_0^2}{|\log b_0|}, \frac{b^2_0}{|\log b_0|}]$, we let $T(a_+)$ be the life time of the solution to \fref{wave} with initial data \fref{intiialdtata}, and  $T_{1}(a_+)>0$ be the supremum of $T\in(0,T(a_+))$ such that for all $t\in [0,T]$, the following estimates hold:
\begin{itemize}
\item Smallness and positivity of $b$:
\be
\label{controllambdaboot}
0<b(t)<5b_0;
\ee
\item Pointwise bound on $b_s$:
\be
\label{poitwisebsboot}
|b_s|^2\leq K(M)\frac{[b(t)]^{4}}{|\log b(t)|^{2}};
\ee
\item Smallness of the energy norm:
\be
\label{bootneergynorm2}
\|(\nabla w(t),\pa_tw(t)+\frac{b(t)}{\lambda(t)}((1-\chi_{B_1(b(t))})\Lambda Q))_{\lambda(t)}\|_{L^2\times L^2}\leq\sqrt{b_0};
\ee
\item {\it Global} $\dot{H}^2$ bound:
\be
\label{poitnwiseboundboot}
|\mathcal E(t)|\leq K(M)\frac{[b(t)]^{4}}{|\log b(t)|^{2}};
\ee
\item A priori bound on the stable and unstable modes: 
\be
\label{boundunstableboot}
|\kappa_+(t)|\leq  2\frac{[b(t)]^{2}}{|\log b(t)|}, \ \ |\kappa_-(t)|\leq (K(M))^{\frac18}\frac{[b(t)]^2}{|\log b(t)|}.
\ee
\end{itemize}
\end{definition}

The existence of blow up solutions in the regime described by Theorem \ref{thmmain} now follows from the following:

\begin{proposition}
\label{propcle}

There exists $a_+\in \left[-\frac{b^2_0}{|\log b_0|}, \frac{b^2_0}{|\log b_0|}\right]$ such that $$T_1(a_+) =T(a_+)$$ 
and then corresponding solution to \fref{wave} blows up in finite time in the regime described by Theorem \ref{thmmain}.
\end{proposition}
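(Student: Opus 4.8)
The plan is to run a topological (Brouwer-type) shooting argument on the one-dimensional parameter $a_+ = \kappa_+(0)$, so the heart of the matter is to show that the bootstrap regime of Definition \ref{defexittime} is stable under the flow \emph{except} possibly for the unstable bound on $\kappa_+$, and that the sign of $\kappa_+$ at the exit time is rigid. First I would fix $M$ large and $b_0 < b_0^*(M)$ small, and for each $a_+ \in [-\frac{b_0^2}{|\log b_0|}, \frac{b_0^2}{|\log b_0|}]$ consider the solution produced by Lemma \ref{smalldata}, which already verifies all the bootstrap bounds on $[0,T_0]$ with the constant $K(M)$ taken larger than the $C(M)$ there. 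The first task is to prove the \emph{strict improvement} of every bootstrap estimate \fref{controllambdaboot}, \fref{poitwisebsboot}, \fref{bootneergynorm2}, \fref{poitnwiseboundboot} and the stable-mode bound in \fref{boundunstableboot} on $[0,T_1(a_+)]$, i.e. that on this interval each inequality actually holds with a smaller constant, so these five estimates can never be the ones that are saturated first at $T_1(a_+)$. This is the analytic core and I would organize it exactly as in \cite{RaphRod}: modulation equations give $|b_s + c_b b^2| \lesssim b^2/|\log b|^2$ and $|\lambda_s/\lambda + b|$ small from the orthogonality \fref{orthoeetb}; the energy $\mathcal E(t)$ from \fref{poitnwiseboundWbis} is controlled by a monotonicity formula for $H_\lambda$ at the $\dot H^2$ level, using the coercivity of the linearized Hamiltonian modulo the unstable direction $\psi$ and the orthogonality $(\e,\chi_M\Phi)=0$ (here invoking Lemma \ref{lemmacoercitivity}, the algebraic identities \fref{signone}, \fref{signtwo}, and the explicit formula \fref{defqexplicite}); and the $\dot H^1$-type bound \fref{bootneergynorm2} follows by interpolation between the $\dot H^2$ bound and energy conservation for \fref{wave}, the unbounded-tail correction being handled exactly via the $(1-\chi_{B_1})\Lambda Q$ compensation already built into the norm. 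Because $c_b \sim \frac{1}{2|\log b|} > 0$ by \fref{computationcb}, $b$ is strictly decreasing to leading order, which gives the strict improvement of \fref{controllambdaboot}, and reintegrating $\frac{ds}{dt} = \frac1\lambda$, $b = -\lambda_t$ with $b_s \sim -c_b b^2$ yields the blow-up law \fref{lawlambda}; I would postpone that last reintegration to after the shooting step.

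With the five "stable" bounds strictly improved, the only way to exit the regime at a finite $T_1(a_+) < T(a_+)$ is through saturation of the unstable bound $|\kappa_+(t)| \le 2\frac{b^2}{|\log b|}$ in \fref{boundunstableboot}. The second task is the \emph{outgoing / transversality} property: from \fref{eqeqb}, projecting onto $\psi$ and using $H\psi = -\zeta\psi$ together with the definitions \fref{defaplus}, \fref{aplusamoins}, one gets
\[
\frac{d\kappa_+}{ds} = \sqrt{\zeta}\,\kappa_+ + O\!\left(\frac{b^2}{|\log b|^2}\right),
\]
where the error absorbs $\Psi_{B_1}$ (of size $b^4$, essentially orthogonal to $\psi$), the $b_s \Lambda P_{B_1}$ and $\partial_s^2 P_{B_1}$ terms (the latter being exactly what the $\frac{b_s}{2\sqrt\zeta}(\partial_b P_{B_1},\psi)$ correction in $\kappa_+$ is designed to cancel to leading order), the dissipative $b(\partial_s\e + 2\Lambda\partial_s\e)$ term, and the nonlinearity $N(\e)$, all controlled by the already-improved bounds. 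Consequently, whenever $|\kappa_+(s)| = 2\frac{b^2}{|\log b|}$, since $\sqrt\zeta \cdot 2\frac{b^2}{|\log b|}$ dominates the error term for $b$ small, we get $\frac{d}{ds}\big(\kappa_+^2\big) > 0$ strictly (using also $b_s < 0$ so that the moving barrier $2\frac{b^2}{|\log b|}$ is itself decreasing, which only helps). Hence $\kappa_+$ is \emph{strictly outgoing} through the barrier: once it touches, it leaves, and $s \mapsto \kappa_+(s)$ does not return.

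Third, I would close the argument by contradiction. Suppose that for every $a_+$ in the interval we have $T_1(a_+) < T(a_+)$; then by the two previous steps $|\kappa_+(T_1(a_+))| = 2\frac{b(T_1(a_+))^2}{|\log b(T_1(a_+))|}$ and the solution is strictly outgoing there. Continuity of the flow in the initial data (smoothness of \fref{wave} in $H^2\times H^1$, plus continuity of the modulation parameters, via Lemma \ref{definitionadmissible} and Lemma \ref{smalldata}) makes $a_+ \mapsto T_1(a_+)$ lower semicontinuous and, at an exit point, the strict transversality upgrades this to continuity of the sign $s_\pm := \mathrm{sgn}\,\kappa_+(T_1(a_+))$. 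By \fref{intialisaifjeo}, $\kappa_+(0) = a_+$, so for $a_+ = +\frac{b_0^2}{|\log b_0|}$ the barrier is already saturated at $s=0$ with $\kappa_+ > 0$, forcing $T_1 = 0$ and $s_+ = +1$; symmetrically $s_+ = -1$ for $a_+ = -\frac{b_0^2}{|\log b_0|}$. Thus $a_+ \mapsto \mathrm{sgn}\,\kappa_+(T_1(a_+))$ would be a continuous map from the connected interval $[-\frac{b_0^2}{|\log b_0|}, \frac{b_0^2}{|\log b_0|}]$ to $\{\pm1\}$ taking both values — impossible. Therefore there exists $a_+$ with $T_1(a_+) = T(a_+)$; for this $a_+$ all bootstrap bounds hold up to the maximal time, the five improved estimates plus $b_s \sim -c_b b^2$ give $b(t) \to 0$ and, reintegrating $\frac{ds}{dt} = \frac1\lambda$, $-\lambda_t = b$, $T(a_+) < +\infty$ with $\lambda(t) = (T-t)e^{-\sqrt{|\log(T-t)|}(1+o(1))}$; the $\dot H^1\times L^2$ convergence \fref{energyquantization} to some $(u^*,v^*)$ follows from the uniform $\dot H^2$ smallness of $\e$ and a Cauchy-in-time argument on $w$ outside the light cone, exactly as in the quantization results recalled in the introduction. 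The main obstacle is unquestionably the first task — propagating the $\mathcal E(t) \lesssim b^4/|\log b|^2$ bound — because, as the authors stress, the factorization $H = A^*A$ of \cite{RaphRod} is unavailable here and one must substitute the explicit algebra of $Q$ and a coercivity lemma for the relevant quadratic form; the transversality and Brouwer steps are, by comparison, soft.
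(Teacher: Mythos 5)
Your overall architecture coincides with the paper's: improved bounds for every estimate except the unstable mode (the content of Lemma \ref{lemmaimprovedbounds} and Lemma \ref{lemmaunstable}), the strictly outgoing property \fref{outgoingdirection} for $\kappa_+$, a one-dimensional Brouwer/intermediate-value shooting in $a_+=\kappa_+(0)$, and finally reintegration of the $b$ law. Two of your steps, however, do not hold as written. First, the endpoint evaluation: with the barrier of Definition \ref{defexittime}, namely $|\kappa_+|\leq 2b^2/|\log b|$ (which is the convention you yourself use in your transversality step), the data $a_+=\pm b_0^2/|\log b_0|$ sit at roughly \emph{half} the barrier at $t=0$, so the barrier is \emph{not} saturated initially and $T_1(a_+)>0$; your claim that $T_1=0$ at the endpoints is false. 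The signs at exit must instead be read off from the outgoing ODE \fref{outgoingdirection} together with $\kappa_+(0)=a_+$: starting from the extreme positive value, $\kappa_+$ can never drop below the (much smaller) error threshold, hence stays positive, grows, and exits through the positive barrier; symmetrically on the left. This is exactly how the paper fixes the values $\pm 1$ of its map $\Phi$, and it is fixable with the tools you already set up, but the justification you wrote is wrong.

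The more serious gap concerns the assertion that the modulation equations coming from the orthogonality \fref{orthoeetb} give the pointwise law $|b_s+c_bb^2|\lesssim b^2/|\log b|^2$, from which you later extract both the improvement of \fref{controllambdaboot} and the blow-up speed. In this problem the projection of \fref{eqeqb} onto $\chi_M\Phi$ only yields Lemma \ref{roughboundpointw}: $b_s=-(\e,H\Phi)/(\Lambda Q,\Phi)+O(\dots)$, where $(\e,H\Phi)$ is controlled only by $\sqrt{|\mathcal E|}\sim\sqrt{K(M)}\,b^2/|\log b|$, i.e.\ the \emph{same order} as the would-be leading term $c_bb^2$; no sign or leading-order information on $b_s$ follows pointwise (this is precisely the difficulty stressed in Remark \ref{remarkbs}, since the factorization of \cite{RaphRod} that made $b_s$ degenerate there is unavailable here). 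The sharp law is obtained in the paper only through the separate flux computation of Lemma \ref{lemmaalgebra}: projecting onto $\Lambda\pbt$ with $\pbt=\chi_{\frac{B_0}{4}}Q$, exploiting the Pohozaev computation \fref{ourgoingflux}, and introducing the time-integrated corrections $G(b)+\mathcal I$, which produces only the \emph{averaged} identity \fref{firstcontrol} with relative error $O(1/\sqrt{|\log b|})$; finite-time blow up then comes from \fref{eq:idon} and the speed \fref{lawlambda} from reintegrating \fref{firstcontrol}, not from a pointwise ODE for $b$. Without this ingredient, your derivation of the "regime described by Theorem \ref{thmmain}" (and even of the non-degeneracy of $b$ needed to close \fref{controllambdaboot}) is not justified.
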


The proof of Proposition \ref{propcle} relies on a monotonicity argument on the energy $\mathcal E$ which is the core of the analysis, see Proposition \ref{lemmaenergy}, and the strictly outgoing behavior of the unstable mode induced by the non trivial eigenvalue $-\zeta<0$ of $H$, see Lemma \ref{lemmaunstable}. The fact that the regime described by the bootstrap bounds \fref{controllambdaboot}, \fref{poitwisebsboot}, \fref{bootneergynorm2}, \fref{poitnwiseboundboot}, \fref{boundunstableboot} corresponds to a finite blow up solution with a specific blow up speed will then follow from the modulation equations and the sharp derivation of the blow speed as in \cite{RaphRod}.


\section{Improved bounds}
\label{core}


This section is devoted to the derivation of the main dynamical properties of the flow in the bootstrap regime described by Definition \ref{defexittime}. The three main steps are first the derivation of a monotonicity property on $\mathcal E$ which allows us to improve the bounds \fref{controllambdaboot}, \fref{poitwisebsboot}, \fref{bootneergynorm2}, \fref{poitnwiseboundboot}  in $[0,T_1(a_+)]$, second the derivation of the dynamics of the eigenmode and the outgoing behavior of the unstable direction, and eventually the derivation of the sharp law for the parameter $b$ which allows to bootstrap its smallness \fref{controllambdaboot} and will eventually allow us to derive the sharp blow up speed.

\begin{remark}
\label{remarkcofm}
All along the proof, we will introduce various constants $C(M),\delta(M)>0$ {\em which do not depend} on the bootstrap constant $K(M)$. 
An important feature of all these constants is that, up to a smaller choice of  $b^*(M)$ or a larger choice of $K(M)$, we assume that any product of the form $C(M) \, f(b)$ where $\lim_{b \to 0} f(b) = 0$ or any ratio $\delta(M)/K(M)$ is small in the trapped regime. 
This will be used implicitly in this section.

\end{remark}


\subsection{Coercitivity of $\mathcal E$}


Let us start with showing that the linearized energy $\mathcal E$ yields a control of suitable  weighted norms of $(w,\e)$ in the regime $t\in [0,T_1(a_+)]$.

\begin{lemma}[Coercitivity of $\mathcal E$]
\label{coerceenergy}
There exists $M_0\geq 1$ such that for all $M\geq M_0$, there exists\footnote{recall remark \ref{remarkcofm}} $\delta(M)>0$ and $C(M)< \infty$ 
such that in the interval $[0,T_1(a_+)),$ there holds:
\be
\label{estlowerenergy}
\mathcal E  \geq  \frac12\lambda^2\int(H_{\lambda}w)^2+\delta(M)\lambda^2\left[\int(\nabla \pa_t w)^2+\int \frac{(\pa_rw)^2}{r^2}\right]- C(M) [{K(M)}]^{\frac{1}{4}} 
\frac{b^{4}}{|\log b|^{2}}.
\ee
\end{lemma}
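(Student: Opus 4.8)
\textbf{Proof strategy for Lemma \ref{coerceenergy}.}

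The plan is to establish the lower bound by expanding the energy $\mathcal E$ and separating it into a principal quadratic form plus controllable error terms. First I would expand $\mathcal E = \lambda^2\int[(H_\lambda\pa_tw,\pa_tw)+(H_\lambda w)^2]$ into the rescaled variables, writing everything in terms of $\pa_s\e$ and $\e$ via the relation $w = \e_\lambda/\lambda^{N/2-1}$, and recall that $H_\lambda = -\Delta + f'(Q_\lambda)$. The key point is that the quadratic form $\int[(H\pa_t w,\pa_t w)] = \int (H^{1/2}\pa_t w)^2$ associated with $H$ is coercive up to the finitely many bad directions: in the radial sector $H$ has exactly one negative eigenvalue (direction $\psi$, from \fref{defmupsi}) and a kernel resonance $\Lambda Q$ (from \fref{resonance}). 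So I would invoke a subcoercivity estimate of the type in Lemma \ref{lemmacoercitivity} (the coercivity lemma referenced in the introduction, cf. \cite{MartelMerlekdv}, \cite{MR1}): modulo the orthogonality condition $(\e,\chi_M\Phi)=0$ from \fref{orthoeetb} and modulo the projection $(\e,\psi)$ and $(\pa_s\e,\psi)$ onto the unstable mode, one controls $\int(\nabla\pa_t w)^2$ and the Hardy-type weighted norm $\int(\pa_r w)^2/r^2$. The choice of $M$ large is exactly what makes the orthogonality direction $\chi_M\Phi$ close enough to the exact resonance direction to beat the almost-zero eigenvalue.

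Second, I would track where the losses come from. The term $\int(H_\lambda w)^2$ is already nonnegative and I keep half of it, which is the $\frac12\lambda^2\int(H_\lambda w)^2$ on the right. The remaining half, together with the positive part of $\int (H^{1/2}\pa_t w)^2$, gives the $\delta(M)$-coercive control after subtracting the bad directions; this requires the perturbative observation that $H_{B_1} = H_\lambda +$ (small, localized, $O(b^2)$ plus $f'(Q)-f'(P_{B_1})$) so that replacing $H_{B_1}$ by $H$ at the linearized level costs only $O(b^2)$ relative to the main terms, controlled by the bootstrap bounds \fref{bootneergynorm2}, \fref{poitnwiseboundboot}. Third, the contributions of the bad directions must be bounded by $C(M)K(M)^{1/4}b^4/|\log b|^2$: for the unstable direction this follows from the bootstrap bounds \fref{boundunstableboot} on $\kappa_\pm$ together with \fref{aplusamoins}, \fref{defaplus} and the estimate $|b_s|\lesssim K(M)^{1/2}b^2/|\log b|$ from \fref{poitwisebsboot} (so $(\e,\psi)$ and $(\pa_s\e,\psi)$ are each $\lesssim K(M)^{1/8}b^2/|\log b|$, hence their squares $\lesssim K(M)^{1/4}b^4/|\log b|^2$); for the $\chi_M\Phi$ direction the orthogonality \fref{orthoeetb} kills it outright. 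A small subtlety is that the weighted norm $\int(\pa_r w)^2/r^2$ is the scale-invariant quantity that appears naturally when one inverts $H$ against $\Phi$ — the degeneracy $|\Phi|\lesssim 1/(1+y^4)$ from \fref{cancelaationphi} and the Pohozaev nondegeneracy \fref{degenfond} are what make the modulation-corrected quadratic form positive-definite transverse to $\psi$.

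I expect the main obstacle to be precisely the loss, in the power nonlinearity setting, of the factorization $H = A^*A$ with $A = -\pa_y + V$ that underpins the monotonicity/coercivity machinery of \cite{RaphRod}. Without it, the sharp coercivity estimate for $\int (H^{1/2}\pa_t w)^2$ transverse to the resonance cannot be read off from a first-order commutator structure, and must instead be extracted from the explicit formula \fref{defqexplicite} for $Q$: one writes down $\Lambda Q$ and $\Gamma$ explicitly (as in \fref{deflambdaq}, \fref{ezpansionH}), uses the explicit Green's function \fref{defulinearsolver}, and verifies the positivity of the relevant bilinear form by a near-explicit computation — this is the technical heart, and the role of the orthogonality constant $M$ being large is to absorb the residual $\log M$-type contributions (cf. the bound on $c$ in Step 2 of the proof of Proposition \ref{propqb}). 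Once the transverse coercivity is in hand, collecting the error terms and invoking the bootstrap bounds is routine bookkeeping.
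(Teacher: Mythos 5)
Your strategy matches the paper's proof: there too, the projections $(\varepsilon,\psi)$, $(\partial_s\varepsilon,\psi)$ are bounded through the bootstrap control of $\kappa_\pm$ and $b_s$ (producing the $[K(M)]^{1/4}b^4/|\log b|^2$ error), the orthogonality $(\varepsilon,\chi_M\Phi)=0$ removes the near-resonant direction, and the coercivity estimates for $H$ together with the Hardy inequalities are used to keep half of $\int(H_\lambda w)^2$ while extracting the $\delta(M)$ control of $\int(\nabla\partial_t w)^2$ and $\int(\partial_r w)^2/r^2$, absorbing the $O(b^2M^C)$ corrections (including the $b\Lambda\varepsilon$ contribution hidden in $\partial_t w$) just as you indicate. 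The only caveat is that the coercivity input for this lemma is Lemma \ref{lemmahardy1} (proved by a standard subcoercivity-plus-compactness argument in Appendix C), not the quadratic form of Lemma \ref{lemmacoercitivity} requiring the near-explicit/numerical positivity check, which enters only later in the monotonicity Proposition \ref{lemmaenergy}.
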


\medskip

\noindent{\bf Proof of Lemma \ref{coerceenergy}.}\\ This is a consequence of the explicit distribution of the negative eigenvalues of $H$ and the a priori bound on the unstable mode \fref{boundunstableboot}.
Indeed, let $t\in [0,T_1(a_+))$, then first observe from \fref{defaplus}, \fref{aplusamoins},  \fref{boundunstableboot} that 
\bea \label{estpsiscal}
\nonumber |(\e,\psi)|^2+|(\partial_s\e,\psi)|^2 & \lesssim & |\kappa_+|^2+|\kappa_-|^2+|b_s|^2({\partial_b P_{B_1}},\psi)^2\\
& \lesssim & [K(M)]^{\frac14}\frac{b^{4}}{|\log b|^{2}}+C(M)b^2|b_s|^2\lesssim [K(M)]^{\frac14}\frac{b^{4}}{|\log b|^{2}}
\eea
where we used the estimates of Proposition \ref{propqb} and the well localization of $\psi$. This yields 
\bea
\label{estprofuiocaliceni}
\nonumber \frac{1}{\lambda^4}(w,\psi_{\lambda})^2+\frac{1}{\lambda^2}(\pa_tw,\psi_{\lambda})^2 & = & (\e,\psi)^2+(\pa_s\e+b\Lambda \e,\psi)^2\\
& \lesssim  & [K(M)]^{\frac14}\frac{b^4}{|\log b|^{2}}+b^2\left[ \int\frac{\e^2}{ y^4(1+|\log(y)|)^2 }  
+  \int \frac{|\nabla \e|^2}{y^2}  \right]
\eea
and similarly using the orthogonality condition \fref{orthoeetb}:
\bea
\label{estprofuiocalicenibis}
\nonumber \frac{1}{\lambda^4}(w,(\chi_M\Phi)_{\lambda})^2+\frac{1}{\lambda^2}(\pa_tw,(\chi_M\Phi)_{\lambda})^2 & = & (b\Lambda \e,\chi_M\Phi)^2\\
& \lesssim  &b^2M^C\left[ \int\frac{\e^2}{ y^4(1+|\log(y)|)^2 } 
+ \int \frac{|\nabla \e|^2}{y^2} 
\right].
\eea

Moreover, applying Lemma \ref{lemmahardy1} yields:
\bea
\nonumber \lambda^2 \int |H_\lambda w|^2 &=& \int |H \e|^2 \\
\nonumber						&\ge& \delta(M) \left[ \int \dfrac{|\nabla \e|^2}{y^2}  + \dfrac{\e^2}{y^4(1+|\log(y)|)^2}\right]
\eea

Introducing the rescaled version \eqref{coercHlambda} of Lemma \ref{lemmahardy1}, we then conclude:
\bee
\mathcal E & \geq & \frac{1}{2}\int\lambda^2(H_{\lambda}w)^2+\delta_1(M)\left[\lambda^2 \int(\nabla \pa_t w)^2+\int \frac{|\nabla \e|^2}{y^2} + \int \dfrac{\e^2}{y^4(1+|\log(y)|)^2} \right]\\
& & - \ b^2M^C\left[ \int\frac{\e^2}{ y^4(1+|\log(y)|)^2 } +  \int \frac{|\nabla \e|^2}{y^2} \right]-C(M)[K(M)]^{\frac14}\frac{b^{4}}{|\log b|^{2}}\\[4pt]
& \geq &  \frac{1}{2}\int\lambda^2(H_{\lambda}w)^2+\delta(M)\lambda^2\left[\int(\nabla \pa_t w)^2+\int \frac{(\pa_rw)^2}{r^2}\right]-C(M)[K(M)]^{\frac14}\frac{b^{4}}{|\log b|^{2}}
\eee
where we used the Hardy bound \fref{harfylog}, and \fref{estlowerenergy} is proved.
This concludes the proof of Lemma \ref{coerceenergy}.

\begin{remark} Note that \fref{estlowerenergy} together with  the Hardy estimate \fref{hardy0}, the coercitivity estimate \eqref{secondordercontrol}
and \fref{estprofuiocalicenibis} yield the following weighted bound on $\e$ which will be extensively used in the paper: let 
\be
\label{defeta}
\eta(s,y)=\lambda^{\frac{N-2}{2}+1}\pa_tw(t,\lambda y)=\pa_s\e(s,y)+b\Lambda  \e(s,y),
\ee 
then:

\bea
\label{estcepueipprec}
\int\frac{\e^2}{y^4(1+|\log y|^{2})}+ 
\int\frac{\eta^2}{y^2}\
+ \int \frac{|\nabla \e|^2}{y^2}+ \int |\nabla \eta|^2
&\lesssim& c(M)\left[ |\mathcal E|+ [K(M)]^{\frac{1}{4}}\frac{b^{4}}{|\log b|^{2}}\right], \\
\label{estcepueip}
&\lesssim& c(M)|\mathcal E|+ \sqrt{K(M)} \frac{b^{4}}{|\log b|^{2}}.
\eea
\end{remark}


\subsection{First bound on $b_s$}
\label{forstboundnbs}

We now derive a crude bound on $b_s$ which appears as an {\it order one} forcing term in the RHS of the equation for $\e$ \fref{eqeqb}. This bound is a simple consequence of the construction of the profile $Q_b$ and the choice of the orthogonality condition \fref{orthoeetb}.

\begin{lemma}[Rough pointwise bound on $b_s$]
\label{roughboundpointw}
There holds the rough pointwise bound\footnote{recall remark \ref{remarkcofm}}:
\be
\label{estbds}
 \left(b_s+\frac{(\e,H\Phi)}{(\Lambda Q,\Phi)}\right)^2\lesssim \frac1{M}|\mathcal E|+ \sqrt{K(M)}\frac{b^{4}}{|\log b|^{2}}.
\ee
 \end{lemma}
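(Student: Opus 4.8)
The idea is to extract the modulation equation for $b_s$ by projecting the equation \fref{eqeqb} for $\e$ onto the fixed direction $\chi_M\Phi$, using that this is precisely the orthogonality direction \fref{orthoeetb}. First I would take the $L^2$ inner product of \fref{eqeqb} with $\chi_M\Phi$. The term $(\partial_s^2\e,\chi_M\Phi)$ vanishes after differentiating the orthogonality relation $(\e,\chi_M\Phi)=0$ twice in $s$ (here $\chi_M\Phi$ is time-independent), so it produces no contribution. The term $(H_{B_1}\e,\chi_M\Phi)$ is, to leading order, $(\e, H\Phi)$ up to the lower-order corrections coming from replacing $H_{B_1}$ by $H$ (i.e. the $b^2D\Lambda$ piece and the difference $f'(P_{B_1})-f'(Q)$), all of which are controlled using Proposition \ref{propqb} together with the coercivity bound \fref{estcepueipprec} on the weighted norms of $\e$. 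The dangerous forcing term is $-b_s\Lambda P_{B_1}$, whose projection is $-b_s(\Lambda P_{B_1},\chi_M\Phi)$; since $P_{B_1}=Q+\chi_{B_1}b^2T_1$ and $\Lambda Q$ is well localized against the compactly supported $\chi_M\Phi$, we have $(\Lambda P_{B_1},\chi_M\Phi)=(\Lambda Q,\Phi)+O(b^2\log M)$, and $(\Lambda Q,\Phi)=(\Phi,\Lambda Q)=32\neq 0$ by \fref{degenfond}. Dividing by this nonvanishing constant isolates $b_s+\frac{(\e,H\Phi)}{(\Lambda Q,\Phi)}$ plus error terms.

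\textbf{Control of the remaining terms.} The remaining source terms on the right of \fref{eqeqb} must be shown to contribute at most $O(\sqrt{1/M}\,|\mathcal E|^{1/2})$ or $O(K(M)^{1/4}b^2/|\log b|)$ in absolute value after pairing with $\chi_M\Phi$. The term $-\Psi_{B_1}$ is, by \fref{estpsibloin}, $c_bb^2\chi_{B_0/4}\Lambda Q$ plus $O(b^4)$; but $c_bb^2(\chi_{B_0/4}\Lambda Q,\chi_M\Phi)$ equals $c_bb^2(\Lambda Q,\Phi)$ up to exponentially small tail (since $M\ll B_0/4$), which is $O(b^2/|\log b|)$, hence absorbed. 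The terms $b(\partial_sP_{B_1}+2\Lambda\partial_sP_{B_1})$ and $\partial_s^2P_{B_1}$ are handled by writing $\partial_sP_{B_1}=b_s\partial_bP_{B_1}$ and using \fref{estpbcut}: on the compact support of $\chi_M\Phi$ we get $|\partial_bP_{B_1}|\lesssim b$, so these pair to $O(b|b_s|)$ and $O(|b_s|^2+b|b_{ss}|)$; the $|b_s|^2$ piece is absorbed by the bootstrap \fref{poitwisebsboot}, and the $b|b_s|$ piece is absorbed into the left side after a Young inequality (it is $o(1)$ times $b_s$). The damping terms $-b(\partial_s\e+2\Lambda\partial_s\e)-b_s\Lambda\e$ pair against $\chi_M\Phi$ to give, after integration by parts, quantities bounded by $b$ times the weighted norms $\int|\nabla\eta|^2$, $\int\eta^2/y^2$ and $\int|\nabla\e|^2/y^2$, controlled by \fref{estcepueipprec}. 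Finally the nonlinearity $N(\e)$ is quadratic in $\e$ and, being localized against $\chi_M\Phi$, is estimated by Sobolev and Hardy inequalities by the weighted norms of $\e$ squared, hence is higher order.

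\textbf{Main obstacle.} The delicate point is bookkeeping the $M$-dependence: pairing against $\chi_M\Phi$ introduces factors $M^C$ (as already visible in \fref{estprofuiocalicenibis}), and the final estimate must retain the $\frac1M$ gain on the $|\mathcal E|$ term. This forces one to be careful that the genuinely $\e$-linear contributions (beyond $(\e,H\Phi)$ itself) carry an explicit small factor — either $b^2$ from the profile corrections, or $b$ from the damping terms, which beats the $M^C$ loss once $b$ is small relative to $M$ per Remark \ref{remarkcofm} — while the $O(1)$-in-$M$ coercivity constant $\delta(M)$ in \fref{estcepueipprec} degrades the bound only to $c(M)|\mathcal E|$, and the $\frac1M$ must come from a separate, cruder splitting of $(\e,H\Phi)$ against the part of the dynamics not yet absorbed. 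Concretely, one bounds the $\e$-dependent errors by $\frac{b^2M^C}{\delta(M)}$ times $(c(M)|\mathcal E|+\sqrt{K(M)}b^4/|\log b|^2)$, and chooses $b^*(M)$ small enough that $b^2M^C/\delta(M)\lesssim 1/M$ in the trapped regime; this is exactly the mechanism encoded in Remark \ref{remarkcofm}. Once these $M$-weighted estimates are in place, collecting terms and dividing by $(\Lambda Q,\Phi)=32$ yields \fref{estbds}.
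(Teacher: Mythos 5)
Your overall strategy (project \fref{eqeqb} onto $\chi_M\Phi$, use $(\partial_s^2\e,\chi_M\Phi)=0$, isolate $b_s$ through the nondegeneracy $(\Lambda Q,\Phi)=32$) is the same as the paper's, but there is a genuine gap in your treatment of the profile terms. You expand $\partial_s^2P_{B_1}=b_{ss}\partial_bP_{B_1}+b_s^2\partial_b^2P_{B_1}$ and bound its pairing with $\chi_M\Phi$ by $O(|b_s|^2+b|b_{ss}|)$, but then you never say how $b|b_{ss}|$ is handled — and it cannot be handled this way: the bootstrap regime of Definition \ref{defexittime} gives no a priori bound on $b_{ss}$, and estimating it would require differentiating the very modulation equation you are deriving (the paper stresses elsewhere that $b_{ss}$-type terms ``cannot be estimated in absolute value''). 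The point you are missing is an exact algebraic cancellation: the orthogonality $(T_1,\chi_M\Phi)=0$ of Proposition \ref{propqb} holds for \emph{every} $b$, and $\chi_{B_1}\equiv 1$ on the support of $\chi_M\Phi$, so $(P_{B_1(b)}-Q,\chi_M\Phi)\equiv 0$ as a function of $b$; differentiating in $s$ gives $(\partial_s^mP_{B_1},\chi_M\Phi)=0$ for all $m\geq 1$, in particular $(\partial_bP_{B_1},\chi_M\Phi)=0$. Hence the $\partial_s^2P_{B_1}$ and $b\,\partial_sP_{B_1}$ terms contribute nothing (only $2b\,b_s(\Lambda\partial_bP_{B_1},\chi_M\Phi)$ survives from $2b\Lambda\partial_sP_{B_1}$, and it sits harmlessly in the prefactor of $b_s$), and no $b_{ss}$ ever appears in the identity \fref{elgebraba}. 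Using the crude bound $|\partial_bP_{B_1}|\lesssim b$ on the support of $\chi_M$, as you do, throws this cancellation away.

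A secondary inaccuracy concerns the source of the $\frac1M$ gain. You claim all $\e$-linear errors beyond $(\e,H\Phi)$ carry a factor $b$ or $b^2$, with the $\frac1M$ produced by $b^2M^C/\delta(M)\lesssim 1/M$ via Remark \ref{remarkcofm}. This is not quite right: replacing $(H_{B_1}\e,\chi_M\Phi)$ by $(\e,H\Phi)$ leaves the term $(H\e,(1-\chi_M)\Phi)$, which carries no power of $b$ at all. The paper controls it by the decay $|\Phi|\lesssim (1+y)^{-4}$ and the $L^2$ control of $H\e$ through $\mathcal E$ (via \fref{estlowerenergy}), giving $(H\e,(1-\chi_M)\Phi)^2\lesssim \frac{|\mathcal E|}{M}+\sqrt{K(M)}\frac{b^4}{|\log b|^2}$ — this tail term is precisely where the $\frac1M|\mathcal E|$ in \fref{estbds} comes from. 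Your bookkeeping happens to remain consistent with the final bound, but only once this term is identified and estimated; as written, your accounting of the linear errors omits it. The rest of your estimates ($\Psi_{B_1}$, the damping terms in $\e$, the nonlinear term, and the expansion of the prefactor $(\Lambda P_{B_1},\chi_M\Phi)$) match the paper's proof.
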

 
 \begin{remark}
 \label{remarkbs}
 This is in contrast with \cite{RaphRod} where the $b_s$ term could be treated as degenerate with respect to $\e$ thanks to a specific choice of orthogonality conditions and the factorization of the operator $H$ in the wave map case. This difficulty in our case will be treated using a specific algebra generated by our choice of orthogonality condition \fref{orthoeetb} which gives the {\it right sign} to the leading order terms involving $b_s$ in the energy identity \fref{lemmaenergy}, see \fref{lowerboundrone}, \fref{signtwo}.
 \end{remark}

\noindent{\bf Proof of Lemma \ref{roughboundpointw}.}\\  
Let us recall that the equation for $\e$ in rescaled variables is given by \fref{eqeqb}, \fref{defhbl}, \fref{defnw}. Observe also that from (\ref{adjoinctionfrimula}), the adjoint of  $H_{B}$ with respect to the $L^2(\RR^N)$ inner product is given by: 
\be
\label{deflstar}
H_{B_1}^*=H_{B_1}+2b^2D.
\ee 
To compute $b_s$ we take the scalar product of (\ref{eqeqb}) with $\chi_M\Phi$. Using the orthogonality relations $$(\pa_s^m \e,\chi_M\Phi)=(\pa_s^m (P_{B_1}-Q),\chi_M\Phi)=
0,\qquad \forall \ m\ge 0$$ we integrate by parts to get the algebraic identity:
\begin{multline}
\label{elgebraba}
b_s\left[(\Lambda P_{B_1},\chi_M\Phi)+2 b(\Lambda \partial_b P_{B_1},\chi_M\Phi)+(\Lambda\ebo,\chi_M\Phi)\right]\\
=  -(\Psi_{B_1},\chi_M\Phi) -  (\ebo,H_{B_1}^*(\chi_M\Phi))+2 b(\partial_s\ebo,\Lambda(\chi_M \Phi))
+  (N(\ebo),\chi_M\Phi).
\end{multline}
We first derive from the estimates of Proposition \ref{propqb}: 
\be
\label{estpisbun}
(\Psi_{B_1},\chi_M\Phi)^2\lesssim  \frac{b^4}{|\log b|^{2}}.
\ee 
Similarly, using \fref{estcepueipprec} yields:
\be
\label{cbeoheofonboehfeo}
(\partial_s\ebo,\Lambda(\chi_M\Phi))^2\lesssim C(M)\left[ c(M)|\mathcal E|+\sqrt{K(M)}\frac{b^4}{|\log b|^{2}}\right]
\ee
and
$$ (\ebo,H_{B_1}^*(\chi_M\Phi)) =  (\e,H\Phi)-(H\e,(1-\chi_M)\Phi)+O\left(M^Cb^2\sqrt{c(M)|\mathcal E|+\sqrt{K(M)}\frac{b^4}{|\log b|^{2}}}\right).$$ We then use the improved decay \fref{cancelaationphi} and \fref{estcepueip} to estimate:
\bee
(H\e,(1-\chi_M)\Phi)^2 & \lesssim & \left(\int_{y\geq M} \frac{|H\e|}{1+y^{N}}\right)^2\lesssim \frac{|\mathcal E|}{M}+\sqrt{K(M)}\frac{b^4}{|\log b|^{2}}
\eee
Thus:
\be
\label{opeunkvrogu}
\left| (\ebo,H_{B_1}^*(\chi_M\Phi))-  (\e,H\Phi)\right|^2\lesssim \frac{1}{M}|\mathcal E|+\sqrt{K(M)}\frac{b^{4}}{|\log b|^{2}}.
\ee
similarly,
\begin{multline}
\label{cneokeoonokehoieru}
(\Lambda P_{B_1},\chi_M\Phi)+2 b(\Lambda \partial_b P_{B_1} ,\chi_M\Phi) +(\Lambda\ebo,\chi_M\Phi)\\
\begin{array}{rcl}
&=& (\Lambda Q,\Phi)+O\left(\dfrac{b}{\log(b)} + M^C \sqrt{|\mathcal E|+\sqrt{K(M)}\frac{b^{4}}{|\log b|^{2}}}\right)\\[8pt]
&=&(\Lambda Q,\Phi)+O\left(\dfrac{b}{\log(b)}  \right)\\
\end{array}
\end{multline} 
  where we have used that in the trapped regime $\mathcal{E} \le {K(M)}b^4/[\log(b)]^2.$ 
Finally, on the support of $\chi_M$ and for $b<b_0^*(M)$ small enough, the term $Q$ dominates in $Q_b=Q+b^2T_1$. 
Hence, for the nonlinear term, we have from Sobolev and \fref{estcepueip}: 
 \bee
|(N(\e),\chi_M\Phi)| & \lesssim & \int \left(\dfrac{\e^2}{1+y^6}+\dfrac{\e^{3}}{1+y^4}\right)\lesssim \int \dfrac{|\e|^2}{(1+y^5)} \left[ 1+ \|y \e\|_{L^{\infty}} \right]\\
& \lesssim & C(M)\left[ \mathcal E+\sqrt{K(M)}\frac{b^{4}}{|\log b|^{2}}\right].
\eee
Injecting this together with \fref{estpisbun}, \fref{cbeoheofonboehfeo}, \fref{opeunkvrogu}, \fref{cneokeoonokehoieru} into \fref{elgebraba} yields \fref{estbds}\footnote{recall remark \ref{remarkcofm}} and concludes the proof of Lemma \ref{roughboundpointw}.


\subsection{Global ${\dot{H}}^2$ bound}  
\label{sectionhtwo}


We derive in the section a monotonicity statement for the energy $\mathcal E$ which provides a global $\dot{H}^2$ estimate for the solution. The monotonicity statement involves suitable repulsivity properties of the rescaled Hamiltonian $H_{\lambda}$ in the focusing regime under the orthogonality condition \fref{eqeqb} and the a priori control of the unstable mode \fref{boundunstableboot}, which themselves rely on the positivity of an explicit quadratic form, see Lemma \ref{lemmacoercitivity}.

\begin{proposition}[$H^2$ control of the radiation]
\label{lemmaenergy}
In the trapped regime, there exists a function $\mathcal F$ satisfying 
\be
\label{estimatef}
\mathcal{F} \lesssim \frac{\mathcal E}{M}+\sqrt{K(M)}\frac{b^4}{|\log b|^2}
\ee
and such that, for some $0<\alpha<1$ close enough to 1, there holds:
\be
\label{vhoheor}
\frac{d}{dt}\left\{\frac{\mathcal E + \mathcal F}{\lambda^{2(1-\alpha)}}\right\}
\leq   \frac{b}{\lambda^{3-2\alpha}}\left[\sqrt{K(M)}\frac{b^4}{|\log b|^{2}}\right].
\ee

\end{proposition}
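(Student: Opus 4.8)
The plan is to differentiate the energy $\mathcal E$ directly in time and to exhibit enough structure in the error terms so that, after adding a suitable lower–order correction $\mathcal F$, the time derivative becomes a manageable forcing term of size $b^5/|\log b|^2$ (up to the rescaling weight $\lambda^{-2(1-\alpha)}$). First I would compute $\frac{d}{dt}\mathcal E$ using the equation \fref{oeioehoe} for $w$. The Hamiltonian $H_\lambda$ depends on time only through $\lambda$, so two kinds of terms appear: genuine dynamical terms coming from $\pa_t^2 w + H_\lambda w = F_{B_1}$, which produce the scalar product $2\lambda^2\int (H_\lambda \pa_t w) F_{B_1}$ together with the integration–by–parts identities \fref{adjoinctionfrimula}, and the "metric" terms coming from $\pa_t\lambda = -b/\lambda\cdot\lambda = -b$ and from $\pa_t(\lambda^2)$ acting on the prefactor in \fref{poitnwiseboundWbis} and on the potential $f'(Q_\lambda)$ inside $H_\lambda$. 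The latter contribute a term of the schematic form $\frac{b}{\lambda}\lambda^2\int\big[(H_\lambda \pa_t w, \Lambda\text{-type})+ \ldots\big]$ which, crucially, must be shown to have a favorable sign — this is the repulsivity of $H_\lambda$ in the focusing regime, and it is exactly the content that the authors flag via \fref{signone}, \fref{signtwo} and Lemma \ref{lemmacoercitivity}. The combination with the factor $\lambda^{-2(1-\alpha)}$, whose derivative contributes $2(1-\alpha)\frac{b}{\lambda}\cdot\frac{\mathcal E}{\lambda^{2(1-\alpha)}}$ with the correct sign for $\alpha<1$ close to $1$, is what allows the dangerous $\frac{b}{\lambda}\mathcal E$ term to be absorbed.

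The heart of the argument is then to estimate the forcing $2\lambda^2\int (H_\lambda \pa_t w)\,F_{B_1}$. Recall from \fref{oeioehoe} that $F_{B_1}$ splits into: (i) the profile error $-\frac{1}{\lambda^2}[\Psi_{B_1}]_\lambda$, controlled by Proposition \ref{propqb}, in particular \fref{estpsibloin}, which after the crucial cancellation of the $c_b b^2 \chi_{B_0/4}\Lambda Q$ piece is $O(b^4)$ on compact sets and decays like $b^2/y^4$ past the light cone; (ii) the modulation terms $-\frac{1}{\lambda^2}[b\Lambda\pa_s P_{B_1}+b_s\Lambda P_{B_1}]_\lambda$, where $\Lambda P_{B_1}\sim\Lambda Q$ has the slow $1/y^2$ decay and hence must be paired against $H_\lambda\pa_t w$ using the Hardy–type weighted bounds \fref{estcepueipprec}–\fref{estcepueip} rather than naive Cauchy–Schwarz, and where the $b_s$ factor is controlled by Lemma \ref{roughboundpointw}; (iii) the total time–derivative term $-\pa_t[\frac1\lambda(\pa_s P_{B_1})_\lambda]$, which is precisely what $\mathcal F$ is designed to absorb — one moves it to the left-hand side as part of $\frac{d}{dt}$, producing a correction $\mathcal F$ built from $\int (H_\lambda w)\cdot(\text{something involving }\pa_b P_{B_1})$ of size $\mathcal E/M + \sqrt{K(M)}b^4/|\log b|^2$ as in \fref{estimatef}; and (iv) the potential–difference term $-[f'(Q_\lambda)-f'((P_{B_1})_\lambda)]w$ plus the nonlinearity $N_\lambda(w)$, both of which are small because $P_{B_1}-Q = O(b^2)$ and $w$ is small in the weighted norms, so they contribute to the error at the right order using Sobolev embeddings and \fref{estcepueip}.

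I expect the main obstacle to be the treatment of the quadratic (in $\e$) terms generated by the "metric" part of $\frac{d}{dt}\mathcal E$ together with the $b_s\Lambda P_{B_1}$ modulation term: these are *not* small in absolute value and cannot be handled by Cauchy–Schwarz — the whole point is that they must combine into an explicit quadratic form in $(\e,\eta)$ whose positivity (in the subspace orthogonal to $\chi_M\Phi$ and after accounting for the unstable mode via \fref{boundunstableboot}) is the substitute for the factorization $H=A^*A$ used in \cite{RaphRod}. So the proof reduces, after bookkeeping, to invoking Lemma \ref{lemmacoercitivity} for the positivity of that quadratic form, using \fref{signone}–\fref{signtwo} to identify the sign of the $b_s$–linear terms, and choosing $M$ large, then $b^*(M)$ small, then $\alpha$ close to $1$, in that order, so that all the error constants $C(M)f(b)$ and $\delta(M)/K(M)$ are negligible per Remark \ref{remarkcofm}. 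The remaining terms are then bounded by $\frac{b}{\lambda}\cdot\sqrt{K(M)}\frac{b^4}{|\log b|^2}$, divided by $\lambda^{2-2\alpha}$, which is \fref{vhoheor}.
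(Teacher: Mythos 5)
Your outline follows the paper's own route: the energy identity \fref{computationeergy} for the weighted quantity $\lambda^{2\alpha}\big[\int(\pa_{tr}w)^2-\int \tilde V(\pa_tw)^2+\int(H_\lambda w)^2+\dots\big]$, the coercivity of the explicit quadratic form (Lemma \ref{lemmacoercitivity}) to handle the order-one quadratic terms produced by $\pa_t\tilde V$, the sign identities \fref{signone}, \fref{signtwo} combined with the modulation equation \fref{estbds} for the $b_s$-linear terms, a time integration by parts of the $\pa_t\big[\tfrac1\lambda(\pa_sP_{B_1})_\lambda\big]$ contribution to generate $\mathcal F$, and Proposition \ref{propqb} together with the weighted bounds \fref{estcepueipprec}--\fref{estcepueip} for everything else. (One structural detail: in the paper the term $-\int H_\lambda w\,H_\lambda F_2$ is \emph{not} absorbed into $\mathcal F$; after the time integration by parts it is kept and shown to have a favorable sign, which is the content of \fref{signtwo} -- you attribute \fref{signtwo} to the $b_s$-linear terms, which is consistent.)

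There is, however, one concrete step that is wrong as stated, and it is precisely the step you designate as the absorption mechanism. Since $b=-\lambda_t>0$, one has $\frac{d}{dt}\lambda^{-2(1-\alpha)}=+2(1-\alpha)\frac b\lambda\,\lambda^{-2(1-\alpha)}$, so the derivative of the explicit weight contributes $+2(1-\alpha)\frac b\lambda\,\frac{\mathcal E}{\lambda^{2(1-\alpha)}}$, a \emph{growth} term of exactly the dangerous type; it cannot absorb anything. The damping that actually kills the $\frac b{\lambda^3}\frac{|\mathcal E|}{M}$-type errors (which your last sentence silently drops before concluding \fref{vhoheor}) is the term $-\frac{2\alpha b}{\lambda}\big[\int(\pa_{tr}w)^2-\int\tilde V(\pa_tw)^2+\int(H_\lambda w)^2\big]$ appearing in $-R_1$ of \fref{firclaucrun}: in your framing it is the net of the $-\frac{2b}{\lambda}\mathcal E$ hidden inside $\dot{\mathcal E}$ through the $\lambda^2$ prefactor of \fref{poitnwiseboundWbis} and the harmful $+\frac{2(1-\alpha)b}{\lambda}\mathcal E$ coming from the weight. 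Moreover this bracket is not manifestly nonnegative (it contains $-\int\tilde V(\pa_tw)^2$ and must be combined with the $\frac32\int\pa_t\tilde V(\pa_tw)^2$ and $b_s$ terms), which is why Lemma \ref{lemmacoercitivity} is needed: the resulting form $\alpha\int(\pa_y\eta)^2+\int[(3-\alpha)V+\frac32 y\cdot\nabla V]\eta^2$ is coercive modulo $(\eta,\psi)$, $(\eta,\Phi)$ only for $\alpha$ close to $1$ (the lemma is proved at $\alpha=1$ and extended by continuity), and not, as you suggest, because of negligibility of constants; $\alpha<1$ strictly is then imposed by the later time integration in Lemma \ref{lemmaimprovedbounds}. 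Once the lower bound \fref{lowerquadra}, $R_1\geq \frac b{\lambda^3}\big[c_1(b_s^2+|\mathcal E|)+O(\sqrt{K(M)}\frac{b^4}{|\log b|^2})\big]$ with $c_1$ universal, is in hand, the $|\mathcal E|/M$ errors are absorbed for $M$ large and the right-hand side reduces to $\frac{b}{\lambda^{3-2\alpha}}\sqrt{K(M)}\frac{b^4}{|\log b|^2}$; without correcting this sign bookkeeping, the passage from your term-by-term estimates to \fref{vhoheor} does not close.
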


{\bf Proof of Proposition \ref{lemmaenergy}}\\

{\bf step 1} Energy identity.\\

Let $$\vt(t,r)=\frac{N+2}{N-2}Q^{\frac{4}{N-2}}_{\lambda}(r)=\frac{1}{\lambda^2}V\left(\frac{r}{\lambda} \right), \ \ V(y)=\frac{N+2}{N-2}Q^{\frac{4}{N-2}}(y).$$
We first have the following algebraic energy identity which follows by integrating by parts from \fref{oeioehoe}:
\be
\frac{1}{2}\frac{d}{dt}\left\{\int(\pa_{tr}w)^2-\int \vt(\pa_tw)^2+\int(H_{\lambda}w)^2\right\}
=  -\int\pa_t \vt\left[\frac{(\pa_t w)^2}{2}+w H_{\lambda}w\right]+\int\pa_twH_{\lambda}F_{B_1}.
\ee
We now use the $w$ equation and integration by parts to compute:
\bea  -\int \pa_t\vt wH_{\l}w &=&  -\int\pa_t\vt w(F_{B_1}-\pa_{tt}w)\\
& = & \frac{d}{dt}\left\{\int\pa_t\vt w\pa_t w\right\}-\int\pa_t\vt wF_{B_1}-\int\pa_t\vt(\pa_tw)^2-\int 
\pa_{tt}\vt w\pa_tw
\eea
We next pick $0<\alpha<1$ close enough to 1 and combine the above identities to get:
\begin{multline}
\label{computationeergy}
\frac{1}{2\l^{2\alpha}}\frac{d}{dt}\left\{\l^{2\alpha}\left[\int(\pa_{tr}w)^2-\int \vt(\pa_tw)^2+\int(H_{\lambda}w)^2-
 2 \int\pa_t\tilde{V} w\pa_t w \right]\right\}\\
  =  -R_1+ R_2+  \frac{2\alpha b}{\lambda}\int\pa_t\vt w\pa_t w-  \int \pa_{tt}\tilde{V} w\pa_tw
\end{multline}
where $R_1$ collects the quadratic terms:
\bea
\label{firclaucrun}
\nonumber R_1&= & \frac{\alpha b}{\lambda}\left[\int(\pa_{tr}w)^2-\int \vt(\pa_tw)^2+\int(H_{\lambda}w)^2\right]+\frac{3}{2}\int\pa_t \vt(\pa_t w)^2-\nonumber \frac{b_s}{\l^2}\int\pa_t \vt(\Lambda Q)_{\l}w\\
\nonumber & = & \frac{b}{\l^3}\left[\alpha\int(\pa_{y}\eta)^2-\alpha\int V\eta^2+\alpha \int(H\e)^2+\frac{3}{2}\int(2V+y\cdot\nabla V)\eta^2\right .\\
& - & \left .b_s\int \e(2V+y\cdot\nabla V)\Lambda Q\right]
\eea 
and $R_2$ collects the nonlinear higher order terms:
\bea
\label{defrtwo}
R_2= \int\pa_twH_{\lambda}F_{B_1} - \int\pa_t\vt w\left[F_{B_1}+\frac{b_s}{\l^2}(\Lambda Q)_{\l}\right]
\eea

{\bf step 2} Derivation of the quadratic terms and treatment of the $b_s$ term.\\
Let us now obtain a suitable {\it lower bound} for the quadratic term $R_1$. The main enemy is the $b_s$ term which is order one in $\e$ and will be treated using a specific algebra generated by the choice of orthogonality condition \fref{orthoeetb}. 

Observe from $H(\Lambda Q)=0$ that $(\Lambda Q/\lambda)_{\lambda}(y)=(1/\lambda)^{\frac{N}{2}}(\Lambda Q)(y/\lambda)$ satisfies: $$-\Delta (\Lambda Q/\lambda)_{\lambda}(y)-(1/\lambda)^2V(y/\lambda)(\Lambda Q/\lambda)_{\lambda}(y)=0.$$ Differentiating this relation at $\lambda=1$ yields: 
$$H\Phi=H(D\Lambda Q)=(2V+y\cdot\nabla V)\Lambda Q.$$ 

We inject this into the modulation equation \fref{estbds} to get:

\be
\label{signone}
-b_s\int\e(2V+y\cdot\nabla V)\Lambda Q   =    b_s^2(\Phi,\Lambda Q)+|b_s|O\left( \frac{|\mathcal E|}{M}+\sqrt{K(M)}\frac{b^4}{|\log b|^{2}}\right)^{\frac{1}{2}}.
\ee 

We thus conclude using the sign $$(\Phi,\Lambda Q)>0$$ and \fref{firclaucrun}, \fref{estbds} that:
\begin{multline}
\label{lowerboundrone}
R_1 \geq  \frac{b}{\l^3}\left[\alpha\int(\pa_y\eta)^2+\int[(3-\alpha)V+\frac{3}{2}y\cdot\nabla V]\eta^2+\alpha\int(H\e)^2\right .
+   c_1(b_s)^2\\
+\left.
O\left(\frac{|\mathcal E|}{M}+\sqrt{K(M)} \frac{b^4}{|\log b|^{2}}\right)
\right]
\end{multline} 
for some universal constant $c_1>0$ {\it independent of $M$}. \\

{\bf step 3} Coercitivity of the quadratic form.\\
We now claim the following coercitivity property of the quadratic form in $\eta$ appearing in the RHS of \fref{lowerboundrone} in the limit case $\alpha=1$,  see Appendix \ref{app_coerc}:

\begin{lemma}[Coercitivity of the quadratic form]
\label{lemmacoercitivity}
There exists a universal constant $c_0>0$ such that  for all $\eta \in \dot{H}_{rad}^1$, there holds: 
$$\int(\pa_{y}\eta)^2+\int \left[2V+\frac{3}{2}y\cdot\nabla V\right]\eta^2\geq c_0\int (\pa_y\eta)^2-\frac{1}{c_0}\left[(\eta,\psi)^2+(\eta,\Phi)^2\right].$$
\end{lemma}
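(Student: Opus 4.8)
The plan is to rewrite the quadratic form as $b(\eta):=\int(\partial_y\eta)^2+\int W\eta^2=(\tilde H\eta,\eta)$ with $W:=2V+\tfrac32 y\cdot\nabla V$, $\tilde H:=-\Delta+W$, and to exploit that in $N=4$ everything is explicit: \fref{defqexplicite} gives $Q=(1+y^2/8)^{-1}$, $V=3Q^2$, and a direct computation gives $W=\tfrac32(4-y^2)(1+y^2/8)^{-3}=O(\langle y\rangle^{-4})$. First I would make the soft reduction to the constrained estimate: it suffices to find $c_0>0$ with $b(\eta)\ge c_0\int(\partial_y\eta)^2$ for every $\eta\in\dot H^1_{rad}$ satisfying $(\eta,\psi)=(\eta,\Phi)=0$. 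Indeed $\psi$ (exponentially localized) and $\Phi=D\Lambda Q$ ($=O(\langle y\rangle^{-4})$) are linearly independent in $\dot H^1_{rad}\cap L^{4/3}$, so their $L^2$ pairings against $\eta$ are $\lesssim\|\partial_y\eta\|_{L^2}$ by $\dot H^1(\RR^4)\hookrightarrow L^4$; decomposing $\eta=\tilde\eta+a_1\psi+a_2\Phi$ with $\tilde\eta\perp_{L^2}\{\psi,\Phi\}$ and $|a_1|+|a_2|\lesssim|(\eta,\psi)|+|(\eta,\Phi)|$ and expanding $b$ bilinearly with Cauchy--Schwarz recovers the lemma from the constrained estimate, with the loss $-\tfrac1{c_0}[(\eta,\psi)^2+(\eta,\Phi)^2]$.

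For the constrained coercivity I would use the four dimensional conformal structure, which makes the problem essentially explicit thanks to the formula for $Q$. Since $-\Delta Q=Q^3$, the metric $g_1:=Q^2 g_0$ conformal to the flat metric $g_0$ is (isometric to) a round sphere $S^4$, with conformal Laplacian $L_{g_1}=-\Delta_{g_1}+1$; conformal covariance of $-\Delta$ then gives, on writing $\eta=Qf$, the exact identities $\int(\partial_y\eta)^2=(L_{g_1}f,f)_{g_1}$ and, using $WQ^{-2}=9Z-3$ with $Z:=\Lambda Q/Q=(1-y^2/8)/(1+y^2/8)$ the first zonal harmonic, $b(\eta)=(\mathcal M f,f)_{g_1}$, $\mathcal M:=-\Delta_{g_1}+9Z-2$ (likewise $H$ corresponds to $-\Delta_{g_1}-2$). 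Radiality of $\eta$ makes $f$ zonal, so $\mathcal M$ restricts to an explicit one dimensional Sturm--Liouville operator with linear potential on $[-1,1]$, while the orthogonality conditions become $f\perp_{g_1}Q^{-3}\psi$ and $f\perp_{g_1}Q^{-3}\Phi$. Then I would prove two things: (i) the non-positive spectrum of $\mathcal M$ in the zonal sector is at most two dimensional --- an explicit finite computation, via the tridiagonal (Jacobi) matrix representing $\mathcal M$ in the Gegenbauer basis, or a Sturm nodal count for $\mathcal M u=0$ whose regular solution is again explicit from \fref{defqexplicite}; and (ii) $Q^{-3}\psi$ and $Q^{-3}\Phi$ pair non-degenerately with this eigenspace, using $H\psi=-\zeta\psi$ and the identity $H\Phi=(2V+y\cdot\nabla V)\Lambda Q$ together with the conformal dictionary --- noting that $Q^{-3}\Phi$ lies only in $L^{4/3}(S^4)$, not $L^2(S^4)$, which is the incarnation of the borderline decay of $Q$ in dimension four and of why the resonance $\Lambda Q$ genuinely forces the second orthogonality. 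From (i)--(ii) one gets positivity of $\mathcal M$ on the codimension two subspace, and the quantitative $\mathcal M\ge c_0 L_{g_1}$ there from the spectral gap of $\mathcal M$ (or, more softly, from a concentration-compactness argument on the compact $S^4$, where mass cannot escape). Transporting back through $\eta=Qf$ gives $b(\eta)\ge c_0\int(\partial_y\eta)^2$ on the constrained set, hence the lemma at $\alpha=1$; the nearby $\alpha$ appearing in \fref{lowerboundrone} are absorbed since $(3-\alpha)V+\tfrac32 y\cdot\nabla V\ge W$ pointwise for $\alpha\le1$.

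The hard part will be the spectral bookkeeping for $\mathcal M$ --- showing its non-positive spectrum is (at most) two dimensional and identifying it with the span generated by the bound state $\psi$ and the resonance $\Phi$ --- since the factorization $H=A^*A$ used in the wave map setting is unavailable here; everything involved is explicit, so this is elementary but delicate. An alternative that sidesteps part of it is the direct compactness/contradiction route: a minimizing sequence $\eta_n$ with $\|\partial_y\eta_n\|_{L^2}=1$, $(\eta_n,\psi)=(\eta_n,\Phi)=0$, $b(\eta_n)\to\ell\le0$ has a nonzero weak limit $\eta_*$ (weak continuity of $\eta\mapsto\int W\eta^2$ on $\dot H^1_{rad}(\RR^4)$, together with the impossibility of mass escaping to spatial infinity --- the north pole of $S^4$ --- at finite gradient energy), which still satisfies the constraints and has $b(\eta_*)\le0$, reducing again to the same non-degeneracy statement for $\mathcal M$.
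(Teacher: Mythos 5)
Your reduction to the constrained problem and your conformal reformulation on $S^4$ are fine (and the index count for the zonal operator $\mathcal M$, equivalently for $\mathcal B=-\Delta+W$, can indeed be made essentially explicit — the paper does exactly this by a Sturm comparison reducing to zeroes of Bessel functions). The genuine gap is in the step ``(i)+(ii) $\Rightarrow$ positivity of $\mathcal M$ on the codimension two subspace''. Knowing that the index is at most $2$ and that the two constraint functionals pair non-degenerately with the non-positive eigenspace does \emph{not} imply that the quadratic form is nonnegative on the constrained set: already in $\mathbb R^2$, take $A=\mathrm{diag}(-1,\delta)$ with $0<\delta\ll1$ and the single constraint $v\perp(1,1)$; the constraint pairs non-degenerately with the negative eigenvector, yet $(Av,v)<0$ on the constrained line. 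What is actually needed (and is the content of the method of \cite{FMR} that the paper follows in Appendix B) is the stronger statement that $B$ restricted to ${\rm Span}\{\mathcal B^{-1}\psi,\mathcal B^{-1}\Phi\}$ is negative definite, i.e.\ that the matrix $\mathbb B$ with entries $(\mathcal B^{-1}\psi,\psi)$, $(\mathcal B^{-1}\Phi,\psi)$, $(\mathcal B^{-1}\Phi,\Phi)$ satisfies $(\mathcal B^{-1}\psi,\psi)<0$ and $\det\mathbb B>0$. This is a quantitative sign condition involving a real cancellation (in the paper's computation $\det\mathbb B\approx(-4.63)(-574.25)-(32.65)^2$), and it is established there by a numerical check; none of the soft identities you invoke ($H\psi=-\zeta\psi$, $H\Phi=(2V+y\cdot\nabla V)\Lambda Q$, the conformal dictionary) produces it. Your fallback compactness argument has the same hole: compactness only reduces the quantitative coercivity to the qualitative nonnegativity on the constrained set, and that nonnegativity is precisely what your criterion (i)+(ii) fails to give.

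So the architecture of your proof matches the paper's up to the change of variables, but the decisive verification — negative definiteness of the $2\times2$ matrix built from $\mathcal B^{-1}\psi$ and $\mathcal B^{-1}\Phi$ (or some equivalent quantitative input) — is missing, and the implication you substitute for it is false in general. To complete the argument along your lines you would have to either carry out that computation (analytically or numerically, as the paper does), or find a different mechanism (e.g.\ a factorization or an explicit test-function/commutator identity) certifying nonnegativity of $B$ under the two orthogonality conditions; note also that your inversion of $\mathcal B$ against $\Phi$ must be handled with care, since $\mathcal B^{-1}\Phi$ only decays like $\log r/r^2$, which is why the paper specifies the admissible growth classes in \eqref{eq_antepsi}--\eqref{eq_anteDLQ}.
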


From a simple continuity argument, there exists $0<\alpha^*<1$ such that given $0<\alpha^*<\alpha\leq 1$, 
for all $\eta\in\dot{H}^1_{rad}$, there holds:
 $$\alpha\int(\pa_{y}\eta)^2+\int \left[(3-\alpha)V+\frac{3}{2}y\cdot\nabla V\right]\eta^2\geq \frac{c_0}{2}\int (\pa_y\eta)^2-\frac{2}{c_0}\left[(\eta,\psi)^2+(\eta,\Phi)^2\right].$$ 
 We now pick once and forall such an $\alpha<1$ and control the negative directions.
 
Using \fref{estprofuiocaliceni} and \fref{estcepueip}, it yields:
 $$
 (\eta,\psi)^2   \lesssim b|\mathcal E|+\sqrt{K(M)}\frac{b^4}{|\log b|^{2}}
 $$
 Similarly, we compute $(\eta,\Phi) =   (\eta, \chi_M \Phi) + (\eta, (1-\chi_M) \Phi)$ for which
  \fref{estprofuiocalicenibis} and \fref{estcepueip} yield
 $$(\eta, \chi_M \Phi)^2  \lesssim b|\mathcal E|+\sqrt{K(M)}\frac{b^4}{|\log b|^{2}}$$  
and we have, applying \fref{hardy0}:
$$
(\eta, (1-\chi_M) \Phi)^2 \le \| y \eta\|_{L^{\infty}}^2  \left[ \int_{y \geq M/2} \dfrac{|\Phi|}{y} \right]^2 \lesssim  \frac{1}{M} \int |\partial_y\eta|^2
$$

 This together with  \fref{lowerboundrone} yields the lower bound on quadratic terms:
 \be
 \label{lowerquadra}
 R_1\geq  \frac{b}{\l^3}\left[c_1((b_s)^2+|\mathcal E|)+O\left(\sqrt{K(M)}\frac{b^4}{|\log b|^{2}}\right)\right]
 \ee
 for some universal constant $c_1>0$. 
Indeed, a straightforward integration by parts in \eqref{poitnwiseboundWbis} yields: 
$$
\mathcal{E} \lesssim \int |\partial_y \eta|^2 + \int |H\e|^2.
$$

{\bf step 4} Control of lower order quadratic terms.\\

The lower order quadratic terms in \fref{computationeergy} are controlled similarly:
\bea
\nonumber \left|\int\pa_t\vt w\pa_t w\right|\lesssim \frac{b}{\l^2}\left[\int\frac{\e^2}{1+y^6}+\int\frac{\eta^2}{y^2}\right]
&\lesssim & \dfrac{1}{\lambda^2}\left(bC(M)|\mathcal E|+\sqrt{K(M)}\frac{b^4}{|\log b|^{2}}\right),\\
\label{commutator} & \lesssim &   \frac{1}{\l^2}\left(\frac{|\mathcal E|}{M}+\sqrt{K(M)}\frac{b^4}{|\log b|^{2)}}\right),
\eea

and, with the help of \eqref{poitwisebsboot},

\bee
\left|\int \pa_{tt}\vt w\pa_tw\right| & \lesssim & \left( \frac{b^2}{\l^3} + \frac{|b_s|}{\lambda^3} \right)\left[\int\frac{\e^2}{1+y^6}+\int\frac{\eta^2}{y^2}\right],
\\
& \lesssim & \frac{b}{\l^3}\left(bC(M)|\mathcal E|+\sqrt{K(M)}\frac{b^4}{|\log b|^{2)}}\right). \\[4pt]
\eee

\begin{remark}
\label{remarkcommutation}
We note here that \fref{commutator} is sufficient for the proof of our theorem. Indeed, the estimated term $\int \pa_t\vt w\pa_t w$ has been integrated by parts with respect to time, so that it becomes a part of $\mathcal F.$
Furthermore, we note that to compute \fref{vhoheor}, we multiply $\mathcal F$ by $\lambda^{2\alpha}.$ Consequently, the commutator $b\alpha/\lambda \int \pa_t\vt w\pa_t w$ appears on the right-hand side.
However, \fref{commutator} yields that, in the trapped regime, this
supplementary term is controlled by $b/\lambda^{3} \sqrt{K(M)} b^4/|\log b|^2.$ 
Similar arguments will be repeated implicitly below for the terms which require an integration by parts with respect to time.\\
\end{remark}

{\bf step 5} Rewriting of the nonlinear $R_2$ terms.\\

It remains to control the nonlinear $R_2$ terms in \fref{computationeergy} given by \fref{defrtwo}. According to \fref{oeioehoe}, this term contains $b_{ss}$ type of terms which cannot be estimated in absolute value and require a further integration by parts in time. Let 
\be
\label{decompFbone}
F_{B_1}=F_1-\pa_tF_2\ \  \mbox{with} \ \ F_2=\frac{1}{\lambda}(\pa_sP_{B_1})_{\lambda}
\ee and rewrite: 
$$
R_2 =  \int\pa_tw H_{\lambda}F_{1}-\int\pa_t\vt w\left[F_{1}+\frac{b_s}{\l^2}(\Lambda Q)_{\l}\right]
 - \int\pa_tw H_{\lambda}\pa_tF_2+\int\pa_t\vt w\pa_tF_{2}
$$
 We now integrate by parts in time to treat the $F_2$ term:
\begin{multline*}
 -\int\pa_twH_{\lambda}\pa_tF_2+\int\pa_t\vt w\pa_tF_{2}\\
 = -\frac{d}{dt}\left\{\int\pa_twH_{\lambda}F_2-\int \pa_t\vt wF_2\right\}
-  \int ( \pa_{tt}\vt w+  2 \pa_t\vt \pa_t w)F_2+ \int\pa_{tt}w H_{\lambda}F_2.
\end{multline*}
The last term is rewritten using \fref{oeioehoe} and integration by parts:
\bee
 \int\pa_{tt}w H_{\lambda}F_2 
&=&\int[F_1-\pa_tF_2-H_{\lambda}w] H_{\lambda}F_2\\
&=&  -\frac{1}{2}\frac{d}{dt}\left\{\int|\nabla F_2|^2-\int  \vt F_2^2\right\}-\frac12\int \pa_t\vt F_2^2
+  \int[F_1-H_{\lambda}w]H_{\lambda}F_2.
\eee
 eventually arrive at a manageable expression for $R_2$: 
\bea
\label{estmanegeable}
R_2&= &  -\frac{d}{dt}\left\{\int\pa_twH_{\lambda}F_2  -\int \pa_t\vt wF_2
+\frac{1}{2}\int|\nabla F_2|^2-\frac 12\int \vt F_2^2\right\}\\
\nonumber & - & \int\pa_t\vt  w\left[F_{1}+\frac{b_s}{\l^2}(\Lambda Q)_{\l}\right]+\int\pa_twH_{\lambda}F_{1}-   \int ( \pa_{tt}\vt w+ 2 \pa_t\vt \pa_t w)F_2\\
\nonumber & - & \frac12\int \pa_t\vt F_2^2+\int[F_1-H_{\lambda}w] H_{\lambda}F_2.
\eea
We now aim at estimating all the terms in the RHS of \fref{estmanegeable}. According to \fref{oeioehoe}, we split $F_1$ into four terms:
\be
\label{decompositionfone}
F_1+\frac{b_s}{\l^2}(\Lambda Q)_{\l}=-\frac{1}{\l^2}\left[\Psi_{B_1}+F_{1,1}+F_{1,2}+N(\e)\right]_{\lambda}
\ee
with 
\be
\label{decompfonone}
F_{1,1}=b\Lambda\partial_sP_{B_1}+b_s(\Lambda P_{B_1}-\Lambda Q), \ \ F_{1,2}=\left[f'(Q)-f'(P_{B_1})\right]\e.
\ee

{\bf step 6} $F_1$ terms.\\

The $F_1$ terms are the leading order terms.\\
{\em $\Psi_{B_1}$ terms}: We first extract from \fref{estpsibloin} the rough bound:
 \be\label{roughpsiboine}
 |\Psi_{B_1}|\lesssim  \frac{b^2}{|\log b|(1+y^2)}+C(M) b^4{\bf 1}_{y\leq 2B_1}
 \ee 
 which yields: $$\int \frac{1+|\log y|^2}{1+y^4}|\Psi_{B_1}|^2\lesssim \frac{b^4}{|\log b|^2}
$$
 and thus from \fref{estcepueip}:
 \bee
 \left|\int \pa_t\vt w\frac{1}{\l^2}(\Psi_{B_1})_{\l}\right| & 
\lesssim & 
\frac{b}{\l^3} \int \dfrac{|\e||\Psi_{B_1}|}{(1+y^4)} 
  \\
 &\lesssim & \frac{b}{\l^3}\frac{b^2}{|\log b|}C(M)\sqrt{|\mathcal E|+\sqrt{K(M)}\frac{b^4}{|\log b|^2}}\\
 & \lesssim & \frac{b}{\l^3}\sqrt{K(M)}\frac{b^4}{|\log b|^{2}}.
 \eee
Next, we use the fundamental cancellation $H(\Lambda Q)=0$ and \fref{estpsibloin} to estimate:
\bee
|H\Psi_{B_1}| &\lesssim&   \frac{b^4}{1+y^2}\left[\frac{1+|\log(by)|}{|\log b|}{\bf 1}_{2\leq y\leq 2B_0}+\frac{1}{b^2y^2|\log b|}{\bf 1}_{\frac{B_0}{2}\leq y\leq 2B_1}+\frac{\log(M)+|\log (1+ y)|}{1+y^2}{\bf 1}_{y\leq 2B_1}\right]\\
 &+&  \frac{b^2}{(1+y^4)|\log b|}{\bf 1}_{ y\geq B_1/2} ,
\eee
and thus 
\be
\label{coeoione}
\int (1+y^2)|H(\Psi_{B_1})|^2\lesssim \frac{b^6}{|\log b|^2}.
\ee  
 Hence:
\bee
 \left|\int\pa_tw H_{\lambda}(\frac{1}{\lambda^2}(\Psi_{B_1})_{\lambda})\right| &\lesssim  &
 \dfrac{b}{\l^3} \|\eta/y\|_{L^2}  \left[ \int \dfrac{1}{b^2} (1+y)^2|H(\Psi_{B_1})|^2 \right]^{\frac{1}{2}}
 \\
&\lesssim&  \frac{b}{\l^3}\sqrt{K(M)}\frac{b^4}{|\log b|^{2}}.
\eee

{\em $F_{1,1}$ terms}: From \fref{esttone}, \fref{estpbcut},  there holds :
$$
|F_{1,1}|   \lesssim  |b_s|{b^2}\left[\frac{1+|\log(by)|}{|\log b|}{\bf 1}_{2\leq y\leq \frac{B_0}{2}}+\frac{1}{b^2y^2|\log b|}{\bf 1}_{\frac{B_0}{2}\leq y \leq 2B_1}+\frac{\log(M)+|\log y|}{1+y^2}\right]
$$
and, recalling that differentiation w.r.t. $y$ acts as a multiplication by $1/(1+y)$ : 
$$
\left|HF_{1,1}\right|\\
 \lesssim  C(M)\frac{|b_s|b^2}{1+y^2}\left[\frac{1+|\log(by)|}{|\log b|}{\bf 1}_{2\leq y\leq \frac{B_0}{2}}+\frac{1}{b^2y^2|\log b|}{\bf 1}_{\frac{B_0}{2}\leq y \leq 2B_1}+\frac{\log(M)+|\log y|}{1+y^2}\right]
$$
from which 
\be
\label{coeoionebis}\int  (1+y^2)|H(F_{1,1})|^2\lesssim |b_s|^2 \frac{b^2}{|\log b|^2},
\qquad 
\int \dfrac{(1+|\log y|^2)}{(1+y^4)} |F_{1,1}|^2 \lesssim |b_s|^2 {b^2}.
\ee
 Hence similar arguments as with the $\Psi_{B_1}$ terms yield:
\bee
\left|\int \pa_t \tilde{V} w F_{1,1}\right| & \lesssim&  \frac{b}{\l^3}b |b_s| C(M)\sqrt{|\mathcal E|+\sqrt{K(M)}\frac{b^4}{|\log b|^2}}
\\
&\lesssim&  \frac{b}{\l^3}\sqrt{K(M)}\frac{b^4}{|\log b|^{2}},
\eee
and

\bee
\left|\int\pa_twH_{\l}F_{1,1}\right| & \lesssim &  \frac{C(M)b}{\l^3}\frac{|b_s|}{|\log b|}\sqrt{|\mathcal E|+\sqrt{K(M)}\frac{b^{4}}{|\log b|^2}}\lesssim \frac{b}{\l^3}\left[\frac{|b_s|^2}{|\log b|}+\frac{\mathcal E}{|\log b|}+\sqrt{K(M)}\frac{b^4}{|\log b|^2}\right]\\
& \lesssim & \frac{b}{\l^3}\sqrt{K(M)}\frac{b^4}{|\log b|^{2}}.
\eee

{\em $F_{1,2}$ terms}: The explicit expansion of the cubic nonlinearity and the bound \fref{esttone} yield: 
\be
\label{estimtevkdljdlrm}
|F_{1,2}|\lesssim \frac{C(M)b^2}{1+y^2}|\e|, \ \ |\nabla F_{1,2}|\lesssim \frac{C(M)b^2}{1+y^3}|\e|+\frac{C(M)b^2}{1+y^2}|\nabla \e|
\ee
from which:
$$
\frac{1}{\l^2}  \left|  \int\pa_t\vt w(F_{1,2})_{\l}\right|\lesssim \frac{C(M)b^3}{\l^3}\int\frac{\e^2}{1+y^6}\lesssim \frac{b}{\l^3}\left(b|\mathcal E|+\sqrt{K(M)}\frac{b^4}{|\log b|^2}\right),
$$
 and, after integration by parts of the laplacian term:
\bee
\frac{1}{\l^2}\left|\int\pa_twH_{\lambda}(F_{1,2})_{\lambda}\right| & \lesssim & 
\frac{C(M)}{\l^3}\left[\int \frac{|\eta|}{1+y^4}\frac{b^2}{1+y^2}|\e|+\int|\nabla \eta|\left(\frac{b^2}{1+y^3}|\e|+\frac{b^2}{1+y^2}|\nabla \e|\right)\right]\\
& \lesssim & \frac{b}{\l^3}\left[\dfrac{|\mathcal E|}{M}+\sqrt{K(M)}\frac{b^4}{|\log b|^2}\right].
\eee
{\em Nonlinear term $N(\e)$}: We expand the nonlinearity: $$N(\e)=3P_{B_1}\e^2+\e^3.$$ This yields using \fref{bootneergynormsd}, \fref{hardy0} the rough bound: $$|N(\e)|\lesssim \frac{\e^2}{1+y}.$$ In what follows, we will use the following bound on $\eta$ which follows from \fref{estcepueipprec}, \fref{hardy0}:
$$ |y\eta|_{L^{\infty}}\lesssim |\nabla \eta|_{L^2}\lesssim \left(c(M)|\mathcal E|+ \sqrt{K(M)} \frac{b^{4}}{|\log b|^{2}}\right)^{\frac 12}.$$
We then estimate:
\bee
\left|\frac{1}{\l^2}\int \pa_t\tilde{V}w(N(\e))_\l\right| & \lesssim & \frac{b}{\l^3}\int \frac{|\e|^3}{1+y^5}\lesssim  \frac{b}{\l^3}|\nabla \e|_{L^2}\left(c(M)|{\mathcal E}|+\sqrt{K(M)}\frac{b^4}{|\log b|^2}\right)\\
& \lesssim & \frac{b}{\l^3}\sqrt{K(M)}\frac{b^4}{|\log b|^{2}}
\eee
for $b_0<b^*(M)$ small enough. We split the second term:
\be
\label{cnehoeoei}
\int \pa_tw H_{\l}\left(\frac{(N(\e))_{\l}}{\l^2}\right)=\int \nabla\pa_tw\cdot\nabla\left(\frac{(N(\e))_{\l}}{\l^2}\right)-\int \tilde{V}\pa_tw\left(\frac{(N(\e))_{\l}}{\l^2}\right).
\ee
The second term is estimated in brute force:
\bee
\left|\int \tilde{V}\pa_tw\left(\frac{(N(\e))_{\l}}{\l^2}\right)\right|& \lesssim & \frac{1}{\l^3}\int \frac{|\eta||\e|^2}{1+y^5}\lesssim \frac{1}{\l^3}|y\eta|_{L^{\infty}}\int \frac{|\e|^2}{1+y^6}\\
& \lesssim & \frac{1}{\l^3}\left(c(M)|\mathcal E|+\sqrt{K(M)}\frac{b^4}{|\log b|^2}\right)^{\frac{3}{2}}\\
& \lesssim & \frac{b}{\l^3}\frac{b^4}{|\log b|^{2}}.
\eee
The first term in \fref{cnehoeoei} is split into two parts:
$$\int \nabla\pa_tw\cdot\nabla\left(\frac{(N(\e))_{\l}}{\l^2}\right)=\int \nabla\pa_tw\cdot\left[\nabla (w^3)+3(P_{B_1})_{\l}\nabla(w^2)\right]+\frac{3}{\l^3}\int\e^2\nabla \eta\cdot\nabla P_{B_1}.$$ The last term is integrated by parts in space and then estimated in brute force:
\bee
 \left|\frac{3}{\l^3}\int\e^2\nabla \eta\cdot\nabla P_{B_1}\right| &= &\frac{3}{\l^3}\left|\int \eta\left[\e^2\Delta P_{B_1}+2\e\nabla P_{B_1}\cdot\nabla \e\right]\right|\\
 & \lesssim & \frac{1}{\l^3}\int |\eta|\left[\frac{\e^2}{1+y^4}+\frac{|\e||\nabla \e|}{1+y^3}\right]\lesssim \frac{1}{\l^3}|y\eta|_{L^{\infty}}\left[\int \frac{\e^2}{1+y^5}+\int\frac{|\nabla\e|^2}{y^2}\right]\\
 & \lesssim & \frac{1}{\l^3}\left(c(M)|\mathcal E|+\sqrt{K(M)}\frac{b^4}{|\log b|^2}\right)^{\frac{3}{2}}\lesssim  \frac{b}{\l^3}\frac{b^4}{|\log b|^{2}}.
\eee
The first term is the most delicate one and requires first a time integration by parts:
\bee
& & \int \nabla\pa_tw\cdot\left[\nabla (w^3)+3(P_{B_1})_{\l}\nabla(w^2)\right]=\frac{d}{dt}\left\{\int |\nabla w|^2\left[\frac{3}{2}w^2+3(P_{B_1})_{\l}w\right]\right\}\\
& - & 3\int w\pa_tw|\nabla w|^2-3\int |\nabla w|^2\left[w\pa_t(P_{B_1})_{\l}+(P_{B_1})_{\l}\pa_tw\right].
\eee
We may now estimate all terms in brute force:
$$
\left|\int |\nabla w|^2\left[\frac{3}{2}w^2+3(P_{B_1})_{\l}w\right]\right| \lesssim  \frac{1}{\l^2}[|y\e|_{L^{\infty}} + |yP_{B_1}|_{L^{\infty}}] |y\e|_{L^{\infty}}  \int \frac{|\nabla \e|^2}{y^2}\lesssim  \frac{1}{\l^2}\frac{b^4}{|\log b|^2},
$$
\bee
\left|\int w\pa_tw|\nabla w|^2\right| & \lesssim & \frac{1}{\l^2}|y\e|_{L^{\infty}}|y\eta|_{L^{\infty}}\int\frac{|\nabla \e|^2}{y^2}\lesssim \left(c(M)|\mathcal E|+\sqrt{K(M)}\frac{b^4}{|\log b|^2}\right)^{\frac{3}{2}}\\
& \lesssim & \frac{b}{\l^3}\frac{b^4}{|\log b|^{2}},
\eee
\bee
\left|\int |\nabla w|^2w\pa_t(P_{B_1})_{\l}\right| & \lesssim &\frac{|yw|_{L^{\infty}}}{\l^3}\int \frac{|\nabla w|^2}{y}\left[\frac{b}{1+y^2}+C(M)b|b_s|{\bf 1}_{y\leq B_1}\right]\\
& \lesssim & \frac{b}{\l^3}|\nabla \e|_{L^2}\left(1+C(M)|b_s|\frac{|\log b|}{b}\right)\int\frac{|\nabla \e|^2}{y^2}\\
& \lesssim & \frac{b}{\l^3}\frac{b^4}{|\log b|^{2}}
\eee
where we used the rough bound extracted from \fref{estpbcut}: $|\pa_bP_{B_1}|\lesssim C(M)b{\bf 1}_{y\leq B_1},$ and finally:
\bee
\left|\int |\nabla w|^2(P_{B_1})_{\l}\pa_tw\right|& \lesssim &\frac{1}{\l^3} |y\eta|_{L^{\infty}}\int\frac{|\nabla \e|^2}{1+y^3}\lesssim \left(C(M)\mathcal E+\sqrt{K(M)}\frac{b^4}{|\log b|^2}\right)^{\frac{3}{2}}\\
& \lesssim & \frac{b}{\l^3}\frac{b^4}{|\log b|^{2}},
\eee
for $b_0<b^*(M)$ small enough. The above chain of estimates together with remark \ref{remarkcommutation} closes the control of the nonlinear term $N(\e)$.\\

{\bf step 9} $F_2$ terms.\\

We  estimate from \fref{estpbcut}:
\be
\label{estpartialpb}
\int\left| \frac{\partial_b P_{B_1}}{(1+y)}\right|^2+\int\left|{\nabla  \partial_b P_{B_1}}\right|^2\lesssim \frac{1}{|\log b|^2}, \ \  \int\frac{1}{1+y^3} \left|{\partial_b P_{B_1}}\right|^2\lesssim \dfrac{b}{|\log b|^2}
\ee
 and hence:
  \bee
 \left|\int\pa_twH_{\lambda}F_2\right|  
 & \lesssim & \frac{|b_s|}{\l^2} \left[\int \dfrac{|\eta| |\partial_b P_{B_1}|}{(1+y^4)}  + \int {|\nabla \eta||\nabla \partial_b P_{B_1}|}\right]
  \\
&\lesssim &  \frac{1}{\l^2}\frac{|b_s|}{|\log b|}\sqrt{c(M)|\mathcal E|+\sqrt{K(M)}\frac{b^4}{|\log b|^2}}\\
 & \lesssim &\frac{1}{\l^2}\left[ \dfrac{|\mathcal E|}{M}+\sqrt{K(M)}\frac{b^4}{|\log b|^2}\right],
 \eee
 
 \bee
 \left|\int \pa_t \vt w F_2  \right| &\lesssim& \dfrac{|b_s| b}{\l^2} \int \dfrac{|\partial_b P_{B_1}||\e|}{(1+y^4)} 
 												\lesssim \dfrac{|b_s| b}{\l^2 |\log b|} \left[c(M) |\mathcal{E}| + \sqrt{K(M)} \dfrac{b^4}{|\log b|^2} \right]^{\frac{1}{2}}\\
 												&\lesssim& \dfrac{1}{\l^2} \left[\dfrac{|\mathcal E|}{M} + \sqrt{K(M)} \dfrac{b^4}{|\log b|^2} \right],
 \eee
 \bee
 \int|\nabla F_2|^2+\left|\int VF_2^2\right|  
 & \lesssim & \frac{|b_s|^2}{\l^2} \left[ \int \dfrac{|\partial_b P_{B_1}|^2}{(1+y^4)} + \int |\nabla \partial_b P_{B_1}|^2 \right]
  \\
&\lesssim &  \frac{1}{\l^2}\frac{(b_s)^2}{|\log b|^2}\lesssim \frac{1}{\l^2}\frac{b^4}{|\log b|^2}.
 \eee
 
 similarly:
 \begin{multline*}
 \left| \displaystyle{\int} ( \pa_{tt}\vt w+  2\pa_t\vt \pa_t w)F_2\right|+\left|\int \pa_t\vt F_2^2\right| 
  \lesssim  
 \dfrac{|b_s|}{\l^3} 
 \left[\displaystyle{\int} \dfrac{((|b_s| + b^2)|\e|+b|\eta|) |\partial_b P_{B_1}|}{(1+y^4)} + |b_s|b \displaystyle{\int} \dfrac{|\partial_b P_{B_1}|^2}{1+y^4} \right]
 \\
 \begin{array}{cl}
 \lesssim &
 \dfrac{|b_s|}{\l^3}\left[\dfrac{(|b_s| + b)}{|\log b|}\sqrt{c(M)|\mathcal E|+\sqrt{K(M)}\dfrac{b^4}{|\log b|^2}}+\dfrac{b^2}{|\log b|^2}|b_s|\right]
 \\[12pt]
 \lesssim & \dfrac{b}{\l^3}\left[\dfrac{|\mathcal E|}{M}+\sqrt{K(M)}\dfrac{b^4}{|\log b|^2}\right].
 \end{array}
 \end{multline*}
Eventually, \fref{coeoione}, \fref{coeoionebis} ensure:
$$\int (1+y^2)|H(\Psi_{B_1}+F_{1,1})|^2\lesssim  \left[ \frac{b^6}{|\log b|^2}+\frac{b^2|b_s|^2}{|\log b|^2} \right]\lesssim \frac{b^6}{|\log b|^2}$$ 
which together with \fref{estpartialpb} yields:
$$\left|\int\frac{1}{\l^2}(\Psi_{B_1}+F_{1,1})_{\l}H_{\lambda}F_2\right| \lesssim  \frac{1}{\l^3}\frac{b^3|b_s|}{|\log b|^2}\lesssim \frac{b}{\l^3}\frac{b^4}{|\log b|^2}.
$$
We similarly estimate from \fref{estimtevkdljdlrm}  and after integration by parts:
\bee
\left|\int\frac{1}{\l^2}(F_{1,2})_{\l}H_{\lambda}F_2\right| 
 &\lesssim& 
 \dfrac{|b_s|}{\l^3} \left[\int \dfrac{b^2 |\e||\partial_b P_{B_1}|}{1+y^6} + \int |\nabla \partial_b P_{B_1}|\left( \dfrac{b^2|\e|}{1+y^3} + \dfrac{b^2|\nabla \e|}{1+y^2}\right)     \right]
\\
	 & \lesssim & C(M) \frac{b^4}{\l^3|\log b|}\left(\int\frac{\e^2}{1+y^{6}}+\int\frac{|\nabla \e|^2}{1+y^4}\right)^{\frac12}\lesssim  \frac{b}{\l^3}\frac{b^4}{|\log b|^2}.
	 \eee
	 
For the nonlinear term, we extract from \fref{estpbcut} the rough bound $$|H(\pa_bP_{B_1})|\lesssim [C(M) + \log(b)] \frac{b}{1+y^2}{\bf 1}_{y\leq B_1},$$ which together with \eqref{hardy0} ensures:
\bee
 \left|\int\frac{1}{\l^2}(N(\e))_{\l}H_{\lambda}F_2\right| & \lesssim & \frac{[C(M) + \log(b)]}{\l^3}|b_s|\int\frac{b}{1+y^2} \frac{\e^2}{1+y} {\bf 1}_{y\leq B_1}\\
 & \lesssim & C(M) \frac{|b_s||\log b|^4}{\l^3}\int \frac{\e^2}{(1+y^4)|\log y|^2}\\
 & \lesssim & \frac{b}{\l^3}\sqrt{b}\left(c(M)|\mathcal E|+\sqrt{K(M)}\frac{b^4}{|\log b|^2}\right)\lesssim \frac{b}{\l^3}\frac{b^4}{|\log b|^2}.
 \eee

{\bf step 10} The remaining $F_2$ term has the right sign.\\

It remains to estimate the term $$-\int H_{\l}wH_{\l} F_2$$ in the RHS of \fref{estmanegeable}. Let us stress onto the fact that this term is a priori no better $O(\frac{1}{\l^3}\mathcal E)$ due to the $b_s$ contribution and the bound \fref{estbds}, recall remark \ref{remarkbs}.\\
We now claim that the {\it main contribution has the right sign again}.
 
Indeed, we first compute from the $T_1$ equation \fref{eqtone}:
\be
\label{vnoiogure}HT_1=-\Phi+c_b\chi_{\frac{B_0}{4}}\Lambda Q, \quad 
H \partial_{b} T_1 =  O\left(\dfrac{1}{b|\log b|} \dfrac{\mathbf{1}_{2 \leq y \leq \frac{B_0}{2}}}{(1+y^2)}\right)
\ee
We then apply the decomposition \eqref{decompdbT1}:
$$
H (\partial_b P_{B_1})  =  H\Big(2b T_1 + 2b (\chi_{B_1}-1) T_1 +  b^2 \partial_b \log(B_1) \rho_{B_1} T_1 + b^2 \chi_{B_1} \partial_b T_1\Big)=-2b\Phi  +  \Sigma
$$ 
and estimate using \fref{estpbcut}, \fref{esttbine}, \fref{vnoiogure}:
$$|\Sigma|\lesssim  \frac{b}{1+y^{2}}\left[\frac{1}{|\log b|}{\bf 1}_{2 \leq y \leq \frac{B_0}{2}}+\frac{1}{b^2y^2|\log b|}{\bf 1}_{\frac{B_0}{2}\leq y}\right].$$

In particular, $$\int \Sigma^2\lesssim \frac{b^2}{|\log b|}$$ and thus using the modulation equation \fref{estbds} : 
\bea
\label{signtwo}
& & \nonumber  -\int H_{\l}wH_{\l} F_2 =  -\frac{b_s}{\l^3}\int (H\e) H(\partial_b P_{B_1})=-\frac{b_s}{\l^3}\int H\e\left(-2b\Phi+\Sigma\right)\\
\nonumber  & = & 2\frac{b}{\l^3}b_s(\e,H\Phi)+\frac{b}{\l^3}O\left(\frac{|b_s|}{\sqrt{|\log b|}}\sqrt{|\mathcal E|+\sqrt{K(M)}\frac{b^4}{|\log b|^2}}\right)\\
\nonumber & = & 2\frac{b}{\l^3}\left[-\frac{(\e,H\Phi)}{(\Lambda Q,\Phi)}+O\left(\sqrt{\frac{|\mathcal E|}{M}+\sqrt{K(M)}\frac{b^4}{|\log b|^2}}\right)\right](\e,H\Phi)+\frac{b}{\l^3}O\left(\frac{b^4}{|\log b|^2}\right)\\
& = & -\frac{2b}{\l^3}\frac{(\e,H\Phi)^2}{(\Lambda Q,\Phi)}+O\left(\frac{|\mathcal E|}{M}+\sqrt{K(M)}\frac{b^4}{|\log b|^2}\right) + \frac{b}{\l^3}O\left(\frac{b^4}{|\log b|^2}\right)\\
& \leq & O\left(\frac{|\mathcal E|}{M}+\sqrt{K(M)}\frac{b^4}{|\log b|^2}\right).
\eea
The recollection of all above estimates yields \fref{vhoheor} and concludes the proof of Proposition \ref{lemmaenergy}.


\subsection{Improved bound}


We now claim that the a priori bound on the unstable direction \fref{boundunstableboot} coupled with the monotonicity property of Proposition \ref{lemmaenergy} imply the following improved bounds:

\begin{lemma}[Improved bounds under the a priori control \fref{boundunstableboot}]
\label{lemmaimprovedbounds}
There holds in $[0,T_1(a_+)]$:
\be
\label{energyboundbootted}
\|(\nabla w(t),\pa_tw(t)+\frac{b(t)}{\lambda(t)}((1-\chi_{B_1(b(t))})\Lambda Q))_{\lambda(t)}\|_{L^2\times L^2} \lesssim  b_0 |\log b_0|,
\ee
\be
\label{eq:idon}
 \frac{b^{4}(t)}{|\log b(t)|^2\lambda^{2(1-\alpha)}(t)}\geq \frac{b^{4}(0)}{|\log b(0)|^2\lambda^{2(1-\alpha)}(0)},
\ee
\be
\label{poitwisebsbootimproved}
|b_s|^2\leq \frac{K(M)}{2}\frac{b^{4}}{|\log b|^2},
\ee
\be
\label{poitnwiseboundbootimproved}
|\mathcal E(t)|\leq  \frac{K(M)}{2}\frac{b^{4}}{(\log b)^2}.
\ee
\end{lemma}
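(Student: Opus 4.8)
The four assertions feed one another and are proved in the order \eqref{eq:idon}, \eqref{poitnwiseboundbootimproved}, \eqref{poitwisebsbootimproved}, \eqref{energyboundbootted}, the last requiring a different input (conservation of $E(u)$). The engine for the middle ones is the monotonicity formula of Proposition \ref{lemmaenergy}; the whole point is to integrate it while tracking that its right-hand side carries only $\sqrt{K(M)}$, not $K(M)$. Start with \eqref{eq:idon}. Set $\tilde N(t)=b^4/(|\log b|^2\lambda^{2(1-\alpha)})$, where $\alpha\in(0,1)$ is the number fixed just after Lemma \ref{lemmacoercitivity}, so that $1-\alpha$ is a positive universal constant. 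Using $b=-\lambda_t$ and $b_t=b_s/\lambda$ one computes
\[
\frac{d}{dt}\log\tilde N=\frac{4b_t}{b}-\frac{2(|\log b|)_t}{|\log b|}+2(1-\alpha)\frac{b}{\lambda},
\]
and \eqref{poitwisebsboot} shows the first two terms are $O(\sqrt{K(M)}\,b/(\lambda|\log b|))$, beaten by $2(1-\alpha)b/\lambda$ as soon as $|\log b|\gg\sqrt{K(M)}/(1-\alpha)$, i.e. for $b<b^*(M)$ small. Hence $\frac{d}{dt}\log\tilde N\geq(1-\alpha)b/\lambda>0$, which is \eqref{eq:idon} and also yields the pointwise lower bound $\frac{d}{dt}\tilde N\geq(1-\alpha)\frac{b}{\lambda^{3-2\alpha}}\frac{b^4}{|\log b|^2}$.

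\textbf{The $\dot H^2$ bound \eqref{poitnwiseboundbootimproved}.} Pick a constant $\Lambda_*\sim\sqrt{K(M)}/(1-\alpha)$; then Proposition \ref{lemmaenergy} together with the lower bound on $\frac{d}{dt}\tilde N$ above gives $\frac{d}{dt}\{(\mathcal E+\mathcal F)/\lambda^{2(1-\alpha)}-\Lambda_*\tilde N\}\leq0$. Integrating on $[0,t]$, discarding $-\Lambda_*\tilde N(0)\leq0$, and multiplying by $\lambda^{2(1-\alpha)}(t)$,
\[
(\mathcal E+\mathcal F)(t)\leq\lambda^{2(1-\alpha)}(t)\,\frac{(\mathcal E+\mathcal F)(0)}{\lambda^{2(1-\alpha)}(0)}+\Lambda_*\,\frac{b^4(t)}{|\log b(t)|^2}.
\]
By the initial bounds of Lemmas \ref{definitionadmissible} and \ref{smalldata} and by \eqref{estimatef}, $|(\mathcal E+\mathcal F)(0)|\lesssim(C(M)+\sqrt{K(M)})b_0^4/|\log b_0|^2$ and $\lambda(0)=1+o(1)$; by \eqref{eq:idon}, $b_0^4\lambda^{2(1-\alpha)}(t)/|\log b_0|^2\lesssim b^4(t)/|\log b(t)|^2$. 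Combined with the lower bound $\mathcal E\geq-C(M)[K(M)]^{1/4}b^4/|\log b|^2$ of Lemma \ref{coerceenergy} and with $|\mathcal F|\lesssim|\mathcal E|/M+\sqrt{K(M)}b^4/|\log b|^2$, this gives $|\mathcal E(t)|\lesssim(C(M)[K(M)]^{1/4}+\sqrt{K(M)})b^4/|\log b|^2$. Since $C(M)$ does not depend on $K(M)$ and the powers of $K(M)$ here are at most $\tfrac12$, Remark \ref{remarkcofm} (i.e. choosing $K(M)$ large enough relative to $M$) turns the right-hand side into $\leq\tfrac12K(M)b^4/|\log b|^2$, which is \eqref{poitnwiseboundbootimproved}.

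\textbf{The bound on $b_s$ \eqref{poitwisebsbootimproved} and the $\dot H^1$ bound \eqref{energyboundbootted}.} For $b_s$, Lemma \ref{roughboundpointw} gives $(b_s+(\e,H\Phi)/(\Lambda Q,\Phi))^2\lesssim\tfrac1M|\mathcal E|+\sqrt{K(M)}b^4/|\log b|^2$; since $H\Phi=(2V+y\cdot\nabla V)\Lambda Q=O(y^{-6})$, Cauchy--Schwarz against the fast-decaying weight and \eqref{estcepueip} give $(\e,H\Phi)^2\lesssim c(M)|\mathcal E|+\sqrt{K(M)}b^4/|\log b|^2$; inserting the bound on $|\mathcal E|$ obtained above (before the crude rounding), $|b_s|^2\lesssim(c(M)C(M)[K(M)]^{1/4}+c(M)\sqrt{K(M)})b^4/|\log b|^2$, whose powers of $K(M)$ are again at most $\tfrac12$ with $K(M)$-free prefactors, hence $\leq\tfrac12K(M)b^4/|\log b|^2$ for $K(M)$ large. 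For \eqref{energyboundbootted} one uses conservation of $E(u)$: expanding $E(u(t))$ around the modulated profile $(P_{B_1(b(t))})_{\lambda(t)}$, the quadratic part is a Hessian controlling $\|\nabla w\|_{L^2}^2+\|\partial_tw+\tfrac b\lambda((1-\chi_{B_1})\Lambda Q)_\lambda\|_{L^2}^2$ up to the single negative mode, which is controlled by \eqref{estpsiscal}; the profile energy equals $E(Q)+O(b^2|\log b|)$, the cross and higher-order terms are dominated using the $\dot H^2$ bound just proved, and the initial excess energy is $O(b_0^2|\log b_0|)$ by \eqref{intiialdtata}. Conservation of $E(u)$ then yields $\|\nabla w\|_{L^2}^2+\|\partial_tw+\text{tail}\|_{L^2}^2\lesssim b_0^2|\log b_0|\lesssim(b_0|\log b_0|)^2$.

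\textbf{Expected main obstacle.} Everything quantitatively hard is already inside Proposition \ref{lemmaenergy} and Lemma \ref{coerceenergy}; within the present lemma the delicate point is purely the bookkeeping forced by the two-tier constant hierarchy of Remark \ref{remarkcofm}: one must check that at no step does an $M$-dependent constant $C(M),c(M),\delta(M)$ get multiplied by $K(M)$, so that each improved bound always carries a power of $K(M)$ strictly below $1$ (in fact $\leq\tfrac12$) with an $M$-dependent but $K(M)$-independent prefactor. This is exactly why the auxiliary monotone quantity $\tilde N$ with its freely chosen coefficient $\Lambda_*$ is introduced, and why Proposition \ref{lemmaenergy} was arranged so that $\sqrt{K(M)}$ (rather than $K(M)$) appears on its right-hand side; the secondary subtlety is that \eqref{energyboundbootted} cannot be closed by the $\dot H^2$ monotonicity alone and instead rests on the conservation of $E(u)$ and the smallness of the initial super-critical energy.
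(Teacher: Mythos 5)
Your proposal is correct and follows essentially the same route as the paper: energy conservation plus the coercivity bound \fref{coercH} and \fref{estpsiscal} for \fref{energyboundbootted}, the sign of $\frac{d}{ds}\big(b^4/(|\log b|^2\lambda^{2(1-\alpha)})\big)$ via \fref{poitwisebsboot} for \fref{eq:idon}, integration of the monotonicity \fref{vhoheor} combined with \fref{eq:idon}, the initial bounds and Lemma \ref{coerceenergy} for \fref{poitnwiseboundbootimproved}, and Lemma \ref{roughboundpointw} for \fref{poitwisebsbootimproved}, with the $\sqrt{K(M)}$-versus-$K(M)$ bookkeeping of Remark \ref{remarkcofm} closing the loop. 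The only (harmless) deviation is cosmetic: you control the time integral $\int_0^t b\,\lambda^{-(3-2\alpha)}b^4|\log b|^{-2}$ by comparison with the monotone majorant $\tilde N$ instead of the paper's integration by parts in time using $b=-\lambda_t$, and you prove the four bounds in a different order, which causes no circularity since each step uses only the bootstrap bounds on $[0,T_1(a_+)]$.
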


{\bf Proof of Lemma \ref{lemmaimprovedbounds}}\\

{\bf step 1} Energy bound.\\

The energy bound  \fref{energyboundbootted} is a consequence of the conservation of the energy. Indeed, the conservation of the energy and the initial bounds of Lemma \ref{definitionadmissible} ensure $$E(u,\pa_t u)=E(u_0,u_1)=E(Q)+  O(b_0\sqrt{|\log b_0|}), 
$$ 
(see Appendix \ref{definitionadmissible}) and thus: 
\bea
\label{expansionenergyone}
& & E(Q)+O(b_0|\log b_0|) 
\\
\nonumber & = &  \frac 12\int\left[\pa_t(P_{B_1})_{\lambda}+\pa_tw\right]^2+\frac 12\int\left|\nabla (P_{B_1})_{\lambda}+\nabla w\right|^2-\frac{1}{4}\int\left[(P_{B_1})_{\lambda}+w\right]^4.
\eea
We lower bound the first term by  expanding, 
\bee
\partial_t (P_{B_1})_{\lambda} + \partial_ t w &=& \partial_t w + \dfrac{b}{\lambda}((1-\chi_{B_1})\Lambda Q)_{\lambda} + \dfrac{b}{\lambda} (\chi_{B_1}\Lambda Q)_{\lambda} + \dfrac{b^3}{\lambda} (\Lambda [\chi_{B_1}T_1])_{\lambda} + \dfrac{b_s}{\lambda} (\partial_b P_{B_1})_{\lambda}\\
&=&  \partial_t w + \dfrac{b}{\lambda}((1-\chi_{B_1})\Lambda Q)_{\lambda} + \Sigma,
\eee
with 
$$
\int \Sigma^2 \lesssim b_0^2 |\log b_0|,
$$ 
where we used the bootstrap bounds \fref{controllambdaboot}, \fref{poitwisebsboot}. 
Finally :
\be
\label{pouetzero}
 \int\left[\pa_t(P_{B_1})_{\lambda}+\pa_tw\right]^2  \geq  \frac12 \int\left[\dfrac{b}{\lambda}((1-\chi_{B_1})\Lambda Q)_{\lambda}+\pa_tw\right]^2- O(b_0^2|\log b_0|).
\ee

We then expand the second term:
\bee
& & \frac 12\int\left[\nabla (P_{B_1})_{\lambda}+\nabla w\right]^2-\frac{1}{4}\int\left[(P_{B_1})_{\lambda}+w\right]^4=\frac 12\int\left[\nabla P_{B_1}+\nabla \e\right]^2-\frac{1}{4}\int\left[P_{B_1}+ \e\right]^4\\
& = & \frac 12\int\left|\nabla P_{B_1}\right|^2-\frac{1}{4}\int\left|P_{B_1}\right|^4 -(\e,\Delta P_{B_1}+P_{B_1}^3)+\frac12\left(\int |\nabla \e|^2-3\int P_{B_1}^2\e^2\right)\\
& - & \frac14\left(4P_{B_1} \e^3 + \e^4 
\right)
\eee
From the construction of $P_{B_1}$, 

\be
\label{pouetdeux}
\frac 12\int\left|\nabla P_{B_1}\right|^2-\frac{1}{4}\int\left|P_{B_1}\right|^4=E(Q)+
O(b^2 |\log b|).
\ee 

The linear term is treated using \fref{estpsibloin}, the improved decay \fref{cancelaationphi} and \fref{roughpsiboine}:
\bea
\nonumber
\left |(\e,\Delta P_{B_1}+P_{B_1}^3)\right|&=&\left|(\e,b^2D\Lambda P_{B_1}-\Psi_{B_1})\right|\\
\label{pouetun}
&\lesssim& \|\e/y\|_{L^2} \|y(b^2D\Lambda P_{B_{1}} - \Psi_{B_1})\|_{L^2}\lesssim b|\nabla \e|_{L^2} 
\eea
We now rewrite the quadratic term as a small deformation of $H$ and use the coercivity bound \fref{coercH} to ensure:
\be
\label{pouettrois}
\int |\nabla \e|^2-3\int P_{B_1}^2\e^2\geq c_0\int|\nabla \e|^2 + Def,
\ee
with
$$
Def :=  3 \int (Q^2 - P_{B_1}^2) \e^2 - \dfrac{(\e, \psi)^2}{c_0}.
$$
Collecting \eqref{esttone} and \fref{hardy0}, on the one hand, and \fref{estpsiscal} on the other hand, we compute:
\begin{equation} \label{yae}
\left|\int (Q^2 - P_{B_1}^2) \e^2\right| \leq \|y^2(Q^2-P_{{B}_1}^2)\|_{L^{\infty}} \|\nabla \e\|_{L^2}^2 \lesssim b \|\nabla \e\|^2_{L^2},
\qquad 
|(\e,\psi)|^2 \lesssim {b^2}{|\log b|}.
\end{equation}
The nonlinear term is easily estimated from Sobolev:
\be
\label{pouetfour}
\int \left| (3 P_{B_1} + \e)\e^3 \right|\leq \|y P_{B_1} \|_{L^{\infty}}\|y  \e \|_{L^{\infty}} \|\nabla \e\|_{L^2}^2 \lesssim \sqrt{b_0}  \|\nabla \e\|_{L^2}^2
\ee
Injecting \fref{pouetzero}, \fref{pouetun}, \fref{pouetdeux}, \fref{yae}, \fref{pouettrois}, \fref{pouetfour} into \fref{expansionenergyone} yields \fref{energyboundbootted}.\\

{\bf step 2} Lower bound on $b$.\\

We now turn to the proof of \fref{eq:idon}. First observe from the bootstrap estimate \fref{poitwisebsboot} that 
\be
\label{rgouboundbs}
|b_s|\leq \sqrt{K(M)}\frac{b^2}{|\log b|}\leq \frac{1-\alpha}{10}b^2
\ee
This implies:
\bee
\frac{d}{ds}\left(\frac{b^{4}}{(\log b)^2\lambda^{2(1-\alpha)}}\right)=\frac{4b^3}{\lambda^{2(1-\alpha)}(\log b)^2}\left[b_s\left(1-\frac{1}{2\log b}\right)+\frac{1-\alpha}{2}b^2\right]>0
\eee
and \fref{eq:idon} follows.\\

{\bf step 3} Improved $\dot{H}^2$ bound.\\

We now turn to  the proof of  \fref{poitnwiseboundbootimproved}. We integrate \fref{vhoheor} in time and conclude from \fref{estlowerenergy}, \fref{estimatef}:
\bea
\label{chooehewcveieho}
|\mathcal E(t)| & \lesssim & \left(\frac{\lambda(t)}{\lambda(0)}\right)^{2(1-\alpha)}|\mathcal E(0)|\\
\nonumber &+ & (K(M))^{\frac 12}\left[\frac{b^4(t)}{|\log b(t)|^2}+[\lambda(t)]^{2(1-\alpha)}\int_0^t\frac{b(\tau)}{[\lambda(\tau)]^{3-2\alpha}}\frac{b^4(\tau)}{|\log b(\tau)|^2}d\tau\right]
\eea
We then derive from \fref{rgouboundbs}:
\bee
&& \int_0^t\frac{b(\tau)}{[\lambda(\tau)]^{3-2\alpha}}\frac{b^4(\tau)}{|\log b(\tau)|^2}d\tau =-\int_0^t\frac{\lambda_t}{\lambda^{3-2\alpha}}\frac{b^4}{|\log b|^2}d\tau\\
& \leq & \frac{1}{2(1-\alpha)}\frac{b^4(t)}{\lambda^{2(1-\alpha)}(t)|\log b(t)|^2} -\frac{1}{2(1-\alpha)}\int_0^t\frac{b_s}{\lambda^{3-2\alpha}}\frac{b^3}{|\log b|^2}\left[1-\frac{2}{|\log b|^2}\right]\\
& \lesssim & \frac{b^4(t)}{\lambda^{2(1-\alpha)}(t)|\log b(t)|^2} +\sqrt{K(M)}\int_0^t\frac{b(\tau)}{[\lambda(\tau)]^{3-2\alpha}}\frac{b^4(\tau)}{|\log b(\tau)|^2}\frac{1}{|\log b(\tau)|}d\tau
\eee
and hence the bound:
$$\lambda^{2(1-\alpha)}(t)\int_0^t\frac{b(\tau)}{[\lambda(\tau)]^{3-2\alpha}}\frac{b^4(\tau)}{|\log b(\tau)|^2}d\tau\lesssim \frac{b^4(t)}{|\log b(t)|^2}.
$$
Injecting this into \fref{chooehewcveieho} and using the initial bound \fref{initiale0_1},\fref{initiale0_2} and the monotonicity \fref{eq:idon} yields:
\bea
\label{borneimtermemergy}
\nonumber \mathcal{E}(t) & \lesssim & \left(\frac{\lambda(t)}{\lambda(0)}\right)^{2(1-\alpha)}\frac{b^{4}(0)}{|\log b(0)|^2}+(K(M))^{\frac 12}\frac{b^4(t)}{|\log b(t)|^2}\\
& \lesssim & \sqrt{K(M)}\frac{b^4(t)}{|\log b(t)|^2}
\eea
and   \fref{poitnwiseboundbootimproved} follows. \fref{poitwisebsbootimproved} now follows from \fref{roughboundpointw} and \fref{borneimtermemergy}.\\
This concludes the proof of Lemma \ref{lemmaimprovedbounds}.


\subsection{Dynamic of the unstable mode}


We now focus onto the dynamic of the unstable mode. We recall the decomposition 
\be
\label{decompy}
Y(t)=\left|\begin{array}{ll}(\e,\psi)\\(\partial_s\e,\psi)\end{array} \right .=\tilde{a}_+(t)V_++\tilde{a}_-(t)V_-,
\ee
and the variables given by \fref{aplusamoins}:
$$\kappa_+(s)=\tilde{a}_+(s)+\frac{b_s}{2\sqrt{\zeta}}({\partial_b P_{B_1}},\psi), \ \ \kappa_-(s)=\tilde{a}_-(s)-\frac{b_s}{2\sqrt{\zeta}}({\partial_b P_{B_1}},\psi).
$$

\begin{lemma}[Control of the unstable mode]
\label{lemmaunstable}
There holds: for all $t\in [0,T_1(a_+)]$, 
\be
\label{estaminusbis}
|\kappa_-(t)|\leq \frac{1}{2}(K(M))^{\frac18}\frac{b^2}{|\log b|}
\ee
and $\kappa_+$ is {\it strictly outgoing}:
\be
\label{outgoingdirection}
\left|\frac{d \kappa_+}{ds}-\sqrt{\zeta}\kappa_+\right|\leq \sqrt{b}\frac{b^2}{|\log b|}.
\ee
\end{lemma}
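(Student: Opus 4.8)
The plan is to project equation \fref{eqeqb} onto the eigenmode $\psi$ and reduce the dynamics of the unstable direction to a scalar second order ODE. Taking the $L^2(\RR^N)$ inner product of \fref{eqeqb} with $\psi$, using that $H$ is self-adjoint with $H\psi=-\zeta\psi$, that $H_{B_1}-H$ is a small, well localized perturbation, and the key algebraic cancellation $(\psi,\Lambda Q)=0$ — which follows from $H(\Lambda Q)=0$, $H\psi=-\zeta\psi$ and the exponential localization of $\psi$ — one obtains
$$\frac{d^2}{ds^2}(\e,\psi)-\zeta(\e,\psi)=-(\partial_s^2P_{B_1},\psi)+\mathcal G(s),$$
where $\mathcal G$ collects the remaining terms on the right-hand side of \fref{eqeqb}. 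Since $P_{B_1}$ depends on $s$ only through $b$, $\partial_s^2P_{B_1}=b_{ss}\,\partial_bP_{B_1}+b_s^2\,\partial_b^2P_{B_1}$; the contribution $b_s^2(\partial_b^2P_{B_1},\psi)$ is harmless, but $b_{ss}(\partial_bP_{B_1},\psi)$ cannot be estimated pointwise and is the only genuinely dangerous term.

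First I would show that $\mathcal G$ and the term $b_s^2(\partial_b^2P_{B_1},\psi)$ are both $\le\sqrt b\,\tfrac{b^2}{|\log b|}$ — routine but lengthy bookkeeping. The $\Psi_{B_1}$ term is $O(C(M)b^4)$ by \fref{estpsibloin}, \fref{computationcb} and $(\chi_{\frac{B_0}{4}}\Lambda Q,\psi)=O(e^{-c/b})$ (again using $(\Lambda Q,\psi)=0$); the terms built on $\Lambda P_{B_1}$ and $\partial_sP_{B_1}=b_s\partial_bP_{B_1}$ are small thanks to $(\Lambda Q,\psi)=0$, the estimates \fref{estpbcut} and the localization of $\psi$; the terms containing $\partial_s\e$ are handled via $\partial_s\e=\eta-b\Lambda\e$ (cf. \fref{defeta}) and the $\psi$-weighted bounds \fref{estpsiscal}, \fref{estprofuiocaliceni} together with the weighted coercivity \fref{estcepueipprec}, \fref{estcepueip}; and the nonlinearity $N(\e)=3P_{B_1}\e^2+\e^3$ by Sobolev and \fref{estcepueipprec}. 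All of these are bounded by $C(M)\sqrt{K(M)}\,\tfrac{b^3}{|\log b|}+K(M)\tfrac{b^4}{|\log b|^2}$, which by the smallness of $b$ with respect to $M$ and $K(M)$ (Remark \ref{remarkcofm}) is $\le\sqrt b\,\tfrac{b^2}{|\log b|}$.

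Next I would pass to the variables $\tilde a_\pm$ of \fref{defaplus}, which diagonalize the linear part and yield
$$\frac{d\tilde a_\pm}{ds}=\pm\sqrt\zeta\,\tilde a_\pm\mp\frac{b_{ss}}{2\sqrt\zeta}(\partial_bP_{B_1},\psi)+O\Big(\sqrt b\,\tfrac{b^2}{|\log b|}\Big).$$
The definitions \fref{aplusamoins} of $\kappa_\pm$ are built precisely so that differentiating the correction $\pm\tfrac{b_s}{2\sqrt\zeta}(\partial_bP_{B_1},\psi)$ produces the term $\pm\tfrac{b_{ss}}{2\sqrt\zeta}(\partial_bP_{B_1},\psi)$ that cancels the dangerous contribution; the residual $\pm\tfrac{b_s^2}{2\sqrt\zeta}(\partial_b^2P_{B_1},\psi)$ and the term $-\tfrac{b_s}{2}(\partial_bP_{B_1},\psi)=O(C(M)|b_s|\,b)$ produced when rewriting $\sqrt\zeta\tilde a_\pm$ through $\kappa_\pm$ are again $\le\sqrt b\,\tfrac{b^2}{|\log b|}$ by \fref{poitwisebsboot} and Remark \ref{remarkcofm}. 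This gives at once
$$\Big|\frac{d\kappa_+}{ds}-\sqrt\zeta\,\kappa_+\Big|\le\sqrt b\,\frac{b^2}{|\log b|},\qquad\Big|\frac{d\kappa_-}{ds}+\sqrt\zeta\,\kappa_-\Big|\le\sqrt b\,\frac{b^2}{|\log b|},$$
the first of which is exactly \fref{outgoingdirection}.

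Finally, \fref{estaminusbis} follows by integrating the second inequality: writing $\kappa_-(s)e^{\sqrt\zeta s}=\kappa_-(0)+\int_0^se^{\sqrt\zeta\sigma}h(\sigma)\,d\sigma$ with $|h|\le\sqrt b\,\tfrac{b^2}{|\log b|}$, and using that $b$ varies slowly (by \fref{poitwisebsbootimproved}, $|b_s|\ll b$, so $\sqrt b\,b^2/|\log b|$ changes only by a factor $1+o(1)$ on the $O(1)$ timescale of the exponential weight, whence the convolution essentially reproduces its value at $s$), one gets $|\kappa_-(s)|\lesssim e^{-\sqrt\zeta s}|\kappa_-(0)|+\sqrt b\,\tfrac{b^2}{|\log b|}$; since $b(s)$ decays at most polynomially in $s$ whereas $e^{-\sqrt\zeta s}$ decays exponentially, and $|\kappa_-(0)|\le (C(M))^{\frac18}\tfrac{b_0^2}{|\log b_0|}$ by the initialization \fref{boundunstablebootsd}, the right-hand side is $\le\tfrac12(K(M))^{\frac18}\tfrac{b^2}{|\log b|}$ once $K(M)$ is chosen large and $b^*(M)$ small, which is \fref{estaminusbis}. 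The main obstacle throughout is the $b_{ss}$ term: by the modulation identity \fref{estbds} it is only $O(\mathcal E)$-size, hence far too large to be an error term, and the whole purpose of the shifted stable/unstable coordinates $\kappa_\pm$ is to absorb it into a total $s$-derivative; the remaining, purely technical difficulty is the term-by-term verification that all other source terms in the projected equation are genuinely lower order, which relies on the exponential localization of $\psi$, the cancellation $(\psi,\Lambda Q)=0$, and the weighted coercivity estimates of the preceding subsections.
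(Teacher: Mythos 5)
Your proposal is correct and follows essentially the same route as the paper: projection of \fref{eqeqb} onto $\psi$, the orthogonality $(\psi,\Lambda Q)=0$, the shifted variables $\kappa_\pm$ of \fref{aplusamoins} absorbing the dangerous $(\partial_s^2P_{B_1},\psi)=\frac{d}{ds}\left[b_s(\partial_bP_{B_1},\psi)\right]$ contribution as a total derivative, and the bound $|E_\pm|\lesssim \sqrt{b}\,b^2/|\log b|$ on the remaining sources, yielding \fref{outgoingdirection} and, after integrating the stable ODE, \fref{estaminusbis}. The only differences are cosmetic: the paper closes the $\kappa_-$ bound via a differential inequality for $\mathcal G=\kappa_-^2|\log b|^2/b^4$ rather than your Duhamel/slow-variation argument, and the $b_s^2(\partial_b^2P_{B_1},\psi)$ residual you estimate in fact cancels identically against the corresponding piece of $(\partial_s^2P_{B_1},\psi)$, so no second $b$-derivative estimate on $P_{B_1}$ is actually needed.
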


{\bf Proof of Lemma \ref{lemmaunstable}}\\

We compute the equation satisfied by the unstable direction $(\e,\psi)$ by taking the inner product of \fref{eqeqb} with the well localized direction $\psi$ to get:
\be
\label{defunstbakle}
\frac{d^2}{ds^2}(\e,\psi)-\zeta(\e,\psi)=E(\e)-(\partial^2_sP_{B_1},\psi)
\ee 
with
\bea
\label{defereste}
\nonumber  E(\e) & = &   -(\Psi_{B_1},\psi)-b_s(\Lambda P_{B_1},\psi)-b(\partial_sP_{B_1}+2\Lambda\partial_sP_{B_1},\psi) -  b(\partial_s\eb+2\Lambda\partial_s\eb,\psi)\\
&-& b_s (\Lambda \eb,\psi)+(N(\eb),\psi)   + b^2(\Lambda \e,D\psi) + ((f'(P_{B_1}) - f'(Q))\e,\psi).
\eea 
Simple algbebraic manipulations using \fref{decompy}, \fref{aplusamoins} and the initial condition yield the equivalent system:
\be
\label{dymniamocsncos}
\frac{d}{ds}\kappa_+=\sqrt{\zeta}\kappa_+(s)+\frac{E_+ (s)}{2\sqrt{\zeta}},
\quad
\frac{d}{ds}\kappa_-=-\sqrt{\zeta}\kappa_-(s)-\frac{E_-(s)}{2\sqrt{\zeta}}\\ \kappa_-(0)
\ee
with 
\be
\label{defetwojvopfj}
E_+(s)=E(s)- \frac{b_s}{2}({\partial_b P_{B_1}},\psi)
\qquad
E_-(s) = E(s)+ \frac{b_s}{2}({\partial_b P_{B_1}},\psi)
\ee
We now have from the explicit formula \fref{defereste}, \fref{defetwojvopfj}, the exponential localization of $\psi$, the orthogonality $$(\psi,\Lambda Q)=0,$$ the estimates of Proposition \ref{propqb} and the bootstrap estimate \fref{poitwisebsboot} the bound:
\be
\label{controleone}
\frac{1}{\sqrt{\zeta}}|E_{\pm}|\lesssim |b|(|b_s|+\sqrt{|\mathcal E|}+\sqrt{K(M)}\frac{b^2}{|\log b|})\leq \sqrt{b}\frac{b^2}{|\log b|},
\ee 
which together with \fref{dymniamocsncos} yields \fref{outgoingdirection}. Let then $$\mathcal G=\kappa_-^2\frac{|\log b|^2}{b^4},$$ then from \fref{dymniamocsncos}, \fref{controleone}, \fref{poitwisebsboot}, we estimate:
\bee
\frac{d\mathcal G}{ds} & = & 2\kappa_-\frac{d\kappa_-}{ds}\frac{|\log b|^2}{b^4}+
\kappa_-^2b_s\left[-\frac{4|\log b|^2}{b^5}+\frac{2\log b}{b^5}\right]\\
& = & 2\frac{|\log b|^2}{b^4}\left[\kappa_-\left(-\sqrt{\zeta}\kappa_--\frac{E_-}{\sqrt{\zeta}} \right)  \right]+\kappa_-^2 \, \frac{|\log b|^2}{b^4}\, O\left(\frac{|b_s|}{b}\right) \\
& \leq & -\frac{\sqrt{\zeta}}{2}\frac{|\log b|^2}{b^4}\kappa_-^2+\frac{|\log b|^2}{b^4}\kappa_-\sqrt{b}\frac{b^2}{|\log b|}\lesssim  -\frac{\sqrt{\zeta}}{2}\mathcal G+1.
\eee
We integrate this in time
$$\mathcal G(s)\leq \mathcal  G(0)e^{-\frac{\sqrt{\zeta}}{2}s}+\int_{0}^s e^{-\frac{\sqrt{\zeta}}{2}(s-\sigma)}d\sigma\lesssim 1 
$$
where we used the initial inequality \fref{initialG} yielding that $\mathcal G(0)\lesssim 1.$ This concludes the proof of \fref{estaminusbis} and of Lemma \ref{lemmaunstable}.


\subsection{Derivation of the sharp law for $b$}
\label{sectionbsharp}

We now turn to the derivation of the sharp law for $b$ which will yield the required monotonicity statement on $b$ to close the smallness bootstrap estimate \fref{controllambdaboot}, and will eventually lead to the derivation of the sharp blow up speed \fref{lawlambda}.\\

\begin{lemma}[Sharp derivation of the $b$ law]
\label{lemmaalgebra}
Let 
\be
\label{defpbtzero}
\pbt=\chi_{\frac{B_0}{4}}Q,
\ee
\be
\label{deffb}
G(b)=b|\Lambda \pbt|_{L^2}^2+\int_0^b \tilde b({\partial_b \pbt},\Lambda \pbt)d\tilde b,
\ee
\be
\label{defiun}{\mathcal I}(s)=(\partial_s\eb,\Lambda \pbt)+b(\e+2\Lambda\eb,\Lambda  \pbt)+b_s({\partial_b  \pbt},\Lambda  \pbt)-b_s \left({\partial_b}(P_{B_1}- \pbt), \Lambda  \pbt)\right ),
\ee
then there holds:
\be
\label{estkeyfb}
G(b)=64b |\log b| +O(b), \ \ |{\mathcal{I}}|\lesssim K(M)b,
\ee
\be
\label{firstcontrol}
\left|\frac{d}{ds}\{G(b)+{\mathcal{I}}(s)\}+32b^{2}\right|\lesssim K(M)\frac{b^{2}}{\sqrt{|\log b|}}.
\ee
\end{lemma}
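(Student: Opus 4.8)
\medskip
\noindent\textbf{Proof strategy.} The plan is to test the $\e$--equation \fref{eqeqb} against the almost kernel element $\Lambda\pbt$ of $H$ and to reorganize the resulting identity so that every term which cannot be bounded in absolute value --- those proportional to $b_s$ and $b_{ss}$, together with the time--derivative terms $\partial_s^2\e$ and $\partial_s^2 P_{B_1}$ --- is absorbed either into the exact derivative $\frac{d}{ds}G(b)$, through the definition \fref{deffb}, or into $\frac{d}{ds}\mathcal{I}$, through an integration by parts in time. The static estimates \fref{estkeyfb} should be dispatched first, being comparatively soft.

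\smallskip
For $G(b)$ I would expand $\Lambda\pbt=\chi_{\frac{B_0}{4}}\Lambda Q+(y\cdot\nabla\chi_{\frac{B_0}{4}})Q$ and combine the explicit asymptotics \fref{deflambdaq}, \fref{ezpansionLQ} of $\Lambda Q$ with the Pohozaev flux \fref{degenfond} (equivalently $y^4|\Lambda Q|^2\to 64$) to obtain $|\Lambda\pbt|_{L^2}^2=\int\chi_{\frac{B_0}{4}}^2|\Lambda Q|^2+O(1)=64|\log b|+O(1)$; since $\partial_b\pbt$ is supported in the transition zone $y\sim B_0$, where $\Lambda Q\sim y^{-2}$, one gets $(\partial_b\pbt,\Lambda\pbt)=O(b^{-1})$, hence $\int_0^b\tilde b\,(\partial_b\pbt,\Lambda\pbt)\,d\tilde b=O(b)$, and $G(b)=64b|\log b|+O(b)$. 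For $\mathcal{I}$ I would estimate each of the four terms in \fref{defiun} by Cauchy--Schwarz against the weights of \fref{estcepueipprec}: using $\|y\Lambda\pbt\|_{L^2}\sim b^{-1}$, the bootstrap bounds \fref{poitwisebsboot}, \fref{poitnwiseboundboot} and \fref{estcepueipprec} (so that $\|\eta/y\|_{L^2},\|\nabla\e/y\|_{L^2}\lesssim\sqrt{K(M)}\,b^2/|\log b|$ and $|b_s|\lesssim\sqrt{K(M)}\,b^2/|\log b|$), together with the rough bound $|\partial_bP_{B_1}|\lesssim C(M)\,b\,\mathbf{1}_{y\le B_1}$ from \fref{estpbcut}, every term comes out $\lesssim K(M)b$.

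\smallskip
For the dynamic estimate \fref{firstcontrol} I would take the inner product of \fref{eqeqb} with $\Lambda\pbt$ and integrate by parts in $s$: using $\partial_s\pbt=b_s\partial_b\pbt$ this turns $(\partial_s^2\e,\Lambda\pbt)$ into $\frac{d}{ds}(\partial_s\e,\Lambda\pbt)-b_s(\partial_s\e,\Lambda\partial_b\pbt)$, and one checks that $\frac{d}{ds}\{b|\Lambda\pbt|_{L^2}^2\}$ produces exactly the $b_s|\Lambda\pbt|_{L^2}^2$ cancelling the $-b_s(\Lambda\pbt,\Lambda\pbt)$ piece of the source $-b_s\Lambda P_{B_1}$ (after the splitting $P_{B_1}=\pbt+(P_{B_1}-\pbt)$). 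The leftover $b\,b_s$--terms of $\frac{d}{ds}G$ --- from $\frac{d}{db}|\Lambda\pbt|_{L^2}^2$ and from the $\int_0^b$ integral --- are matched, via the integration by parts formulas \fref{adjoinctionfrimula}, against the $b\,b_s$--terms of $-b(\partial_sP_{B_1}+2\Lambda\partial_sP_{B_1},\Lambda\pbt)$ and of $\frac{d}{ds}\{b(\e+2\Lambda\e,\Lambda\pbt)\}$, while the $b_{ss}$ part of $-(\partial_s^2P_{B_1},\Lambda\pbt)=-b_{ss}(\partial_bP_{B_1},\Lambda\pbt)-b_s^2(\partial_b^2P_{B_1},\Lambda\pbt)$ is exactly what the last two $b_s$--terms of $\mathcal{I}$ in \fref{defiun} are built to swallow under $\frac{d}{ds}$, leaving a residual $\lesssim|b_s|^2\,O(b^{-2})\lesssim K(M)b^2/|\log b|^2$. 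Once these cancellations are done the sole surviving leading term is $-(\Psi_{B_1},\Lambda\pbt)$, for which I would use the \emph{refined} bound \fref{estpsibloin} rather than the rough \fref{roughpsiboine}: write $\Psi_{B_1}=c_bb^2\chi_{\frac{B_0}{4}}\Lambda Q+(\mathrm{err})$, note $\Lambda\pbt$ is supported in $\{y\le B_0/2\}$ so the $\mathbf{1}_{y\ge B_1/2}$ part of the error drops, and use $(\chi_{\frac{B_0}{4}}\Lambda Q,\Lambda\pbt)=|\Lambda\pbt|_{L^2}^2+O(1)$ together with \fref{computationcb} to get $(\Psi_{B_1},\Lambda\pbt)=32b^2+O(b^2/|\log b|)$ --- the $32$ being $2(\Phi,\Lambda Q)$ from \fref{degenfond}. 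The remaining pieces --- $(H_{B_1}\e,\Lambda\pbt)$, the uncancelled part of $b(\partial_s\e+2\Lambda\partial_s\e,\Lambda\pbt)$, $b_s(\Lambda\e,\Lambda\pbt)$, $b_s(\partial_s\e,\Lambda\partial_b\pbt)$, $(N(\e),\Lambda\pbt)$ --- I would bound by Cauchy--Schwarz against the weights of \fref{estcepueipprec}, by Sobolev for the cubic term, and by the bootstrap bounds; the worst is $|(H_{B_1}\e,\Lambda\pbt)|\lesssim\|H\e\|_{L^2}\|\Lambda\pbt\|_{L^2}+(\text{lower order})\lesssim\sqrt{K(M)}\,\frac{b^2}{|\log b|}\cdot\sqrt{|\log b|}$, which is exactly where the $\sqrt{|\log b|}$ loss in \fref{firstcontrol} comes from.

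\smallskip
The main obstacle is the bookkeeping of the $b_s$-- and $b_{ss}$--terms: since, unlike in the wave map case (cf. Remark \ref{remarkbs}), the rough modulation bound \fref{estbds} only gives $b_s=O(\e)$, the contributions $-b_s\Lambda P_{B_1}$ and $-\partial_s^2P_{B_1}$ are a priori of the same size as the main term $32b^2$ and can be controlled only through exact cancellation combined with time integration by parts; it is the requirement that the residuals be $O(b^2/\sqrt{|\log b|})$ that pins down the precise shapes of $G(b)$ and $\mathcal{I}(s)$, and carrying these cancellations out cleanly --- in particular tracking the $b\,b_s$-- and $b_{ss}$--terms exactly --- is the technical heart of the lemma. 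A secondary but essential point is that the coefficient $32$ can be extracted with an $o(b^2)$ error only by invoking the full strength of \fref{estpsibloin} together with the non--vanishing flux \fref{degenfond}.
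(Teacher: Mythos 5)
Your proposal is correct and follows essentially the same route as the paper: testing \fref{eqeqb} against $\Lambda\pbt$, absorbing the $b_s$-, $b_{ss}$- and $\partial_s^2$-terms into $\frac{d}{ds}\{G(b)+\mathcal I\}$ via time integration by parts, extracting $(\Psi_{B_1},\Lambda\pbt)=32b^2(1+O(1/|\log b|))$ from the refined bound \fref{estpsibloin} together with \fref{computationcb}, and paying the $\sqrt{|\log b|}$ loss precisely on the $(H\e,\Lambda\pbt)$-type linear terms, exactly as in the paper's Steps 1--2 and 5. Only a cosmetic slip: the constant $32$ is $(\Phi,\Lambda Q)$ itself from \fref{degenfond} (i.e.\ $\tfrac12\lim_{y\to\infty}y^4|\Lambda Q|^2$), not $2(\Phi,\Lambda Q)$, since $c_b\,b^2\int\chi_{\frac{B_0}{4}}|\Lambda Q|^2\simeq b^2(D\Lambda Q,\Lambda Q)=32b^2$.
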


\begin{remark} Observe that \fref{estkeyfb}, \fref{firstcontrol} essentially yield a pointwise differential equation $$b_s\sim -\frac{b^2}{2|\log b|}	$$
								 which will allow us to derive the sharp scaling law via the 
								 relationship $-\lsl=b$. 
\end{remark}

{\bf Proof of Lemma \ref{lemmaalgebra}}\\

The proof is inspired by the one in \cite{RaphRod}. We multiply (\ref{eqeqb}) with $ \Lambda \pbt$ and compute:

\bea
\nonumber & & (b_s\Lambda \qbtb+b(\partial_s\qbtb+2\Lambda\partial_s\qbtb)+\partial_s^2P_{B_1},\Lambda  \pbt)=-(\Psi_{B_1}, \Lambda \pbt)-(H_{B_1}\e,\Lambda  \pbt)\\
& - &  \left( \partial_s^2\eb+b(\partial_s\eb+2\Lambda\partial_s\eb)+b_s \Lambda \eb,\Lambda  \pbt\right)+(N(\e),\Lambda  \pbt)\notag
\eea
We further rewrite this as follows:
\bea\label{vhoheoheo}
\nonumber & & (b_s\Lambda  \pbt+b(\partial_s  \pbt+2\Lambda\partial_s  \pbt)+\partial_s^2 \pbt,\Lambda  \pbt)=-(\Psi_{B_1},\Lambda \pbt)\\&-&
(b_s\Lambda (P_{B_1}- \pbt)+b(\partial_s (P_{B_1}- \pbt)+2\Lambda\partial_s (P_{B_1}- \pbt))+\partial_s^2(P_{B_1}- \pbt),\Lambda  \pbt)\notag
\\
& - & (H_{B_1}\e,\Lambda \pbt)- \left( \partial_s^2\eb+b(\partial_s\eb+2\Lambda\partial_s\eb)+b_s \Lambda \eb,\Lambda \pbt\right)+(N(\e),\Lambda\pbt).
\eea

We now estimate all terms in the above identity.\\

{\bf step 1} $b$ terms.\\

An integration by parts in time allows us to rewrite the left-hand side of \fref{vhoheoheo} as follows:
\be
\label{rewritnghgllhs}
(b_s\Lambda  \pbt+b(\partial_s  \pbt+2\Lambda\partial_s  \pbt)+\partial_s^2 \pbt,\Lambda  \pbt)\\ 
  =  \frac{d}{ds}\left[G(b)+b_s({\partial_b  \pbt},\Lambda  \pbt)\right]+|b_s|^2|{\partial_b  \pbt}|_{L^2}^2
 \ee
 with $G$ given by \fref{deffb}. Observe from \fref{poitwisebsboot} the bound $$|b_s|^2|{\partial_b  \pbt}|_{L^2}^2\lesssim \frac{|b_s|^2}{b^2}\lesssim (K(M))^2\frac{b^2}{|\log b|^2}\lesssim \frac{b^2}{\sqrt{|\log b|}}.$$
We now turn to the  key step in the derivation of the sharp $b$ law which corresponds to the following outgoing flux computation\footnote{see again \cite{RaphRod} for more details about the flux computation statemement and its connection to the Pohozaev integration by parts formula}:
\be
\label{ourgoingflux}
(\Psi_{B_1},\Lambda \pbt)=32b^{2}\left(1+O\left(\frac{1}{{|\log b|}}\right)\right) \ \ \mbox{as} \ \ b\to 0.
\ee
Indeed, we first estimate from \fref{estpsibloin}:
\bee
\left|(\Psi_{B_1} - c_b b^2\chi_{\frac{B_0}{4}} \Lambda Q,\Lambda  \pbt)\right|  & \lesssim  & b^4\int_{y\leq \frac{B_0}{2}}\left[\frac{1+|\log (by)|}{|\log b|(1+y^2)}+\frac{1+|\log (1+y)|}{(1+y^2)^2}\right]\\
& \lesssim & \frac{b^2}{|\log b|}.
\eee
The remainder term is computed from \fref{computationcb} and the explicit formula for $Q$ \fref{defqexplicite}:
\bee
(c_bb^2\chi_{\frac{B_0}{4}}\Lambda Q,\Lambda \pbt) & = & \frac{b^2}{2|\log b|}\left(1+O\left(\frac{1}{|\log b|}\right)\right)\left[\int_{y\leq \frac{1}{2b}}(\Lambda Q)^2+O(1)\right]\\
& = & 32b^2\left(1+O\left(\frac{1}{|\log b|}\right)\right),
\eee
and \fref{ourgoingflux} follows.\\
We now estimate the lower order terms in $b$ which correspond to the second line of \eqref{vhoheoheo}. One term is reintegrated by parts in time:
$$-(\partial_s^2(P_{B_1}- \pbt),\Lambda  \pbt)=-\frac{d}{ds}\left\{b_s(\partial_b(P_{B_1}-\pbt),\Lambda \pbt)\right\}+b_s^2(\partial_b(P_{B_1}-\pbt),{\partial_b \Lambda \pbt}).$$ The remaining terms are estimated in brute force using \fref{estpbcut} and \fref{poitwisebsboot} which yield:
\bee
& & \left|(b_s\Lambda (P_{B_1}- \pbt)+b(\partial_s (P_{B_1}- \pbt)+2\Lambda\partial_s (P_{B_1}- \pbt))),\Lambda  \pbt)\right|\\
& + & b_s^2\left|({\partial_b}(P_{B_1}-\pbt),{\partial_b \Lambda \pbt})\right| \lesssim  |b_s|+\frac{|b_s|^2}{b^2}\lesssim K(M)\frac{b^2}{|\log b|}.
\eee

{\bf step 2} $\e$ terms .\\

We are left with estimating the third line on the RHS of \fref{vhoheoheo}. We first treat the linear term from \fref{estlowerenergy}, \fref{estcepueip}, \fref{poitnwiseboundboot}:
\be
\left|(H_{B_1}\e,\Lambda \pbt)\right|  \lesssim  |(H\e,\Lambda \pbt)|+\int|\e||P_{B_1}^2-Q^2||\Lambda \pbt| + b^2 \left|(D\Lambda \e, \Lambda \pbt)\right|
\ee
On the one hand, \eqref{estcepueip} together with bootstrap estimates yield: 
\bee
\int|\e||P_{B_1}^2-Q^2||\Lambda \pbt| &\lesssim &  b^2 \int_{y \leq B_0}  \frac{|\e|}{(1+y^2)^2} \leq b^{\frac 32} \left(
\int \dfrac{|\e|^2}{(1+y)^5}\right)^{\frac{1}{2}}\\
							& \lesssim & \dfrac{b^{2}}{|\log(b)|}
\eee
On the other hand, after integration by parts, we repeat the same arguments and apply \eqref{harfysanslog}.
This  yield:
\bee
b^2\left|(D\Lambda \e, \Lambda \pbt)\right| & \leq & b^2 \int_{y\leq B_0} \dfrac{|\e|}{(1+y^4)} +
b^2 \int_{B_0/4 \le y\leq B_0/2}\frac{|\e|}{(1+y^2)} + b^2\int_{y\leq B_0} |\nabla\e|\frac{y}{1+y^2} \\
&\lesssim & b^{\frac 32} \left(
\int \dfrac{|\e|^2}{(1+y^5)}\right)^{\frac{1}{2}} +\left( \int_{B_0/4 \le y\leq B_0/2}\frac{|\e|^2}{(1+y^4)}\right)^{\frac 12}
+  \left(\int_{y\leq B_0} \dfrac{|\nabla\e|^2}{1+y^2}\right)^{\frac 12}\\
&\lesssim & \sqrt{|\log (b)|} \left(c(M) |\mathcal E| + \sqrt{K(M)} \dfrac{b^4}{|\log b|^2} \ \right)^{\frac 12} \\
&\lesssim & \sqrt{K(M)} \dfrac{b^2}{\sqrt{| \log (b)|}}.  
\eee
Finally:
\bee
|(H\e, \Lambda \pbt)|
& \lesssim &  |H\e|_{L^2}\sqrt{|\log b|}+\sqrt{K(M)} \dfrac{b^2}{\sqrt{ \log (b)}} \\
& \lesssim & \sqrt{|\log b|}\sqrt{|\mathcal E|+\sqrt{K(M)}\frac{b^4}{|\log b|^2}}\lesssim \sqrt{K(M)}\frac{b^2}{\sqrt{|\log b|}}
\eee
We further integrate by parts in time to obtain:
\bee
\nonumber \left( \partial_s^2\eb+b(\partial_s\eb+2\Lambda\partial_s\eb)+b_s \Lambda \eb,\Lambda \pbt\right)
&=&\frac{d}{ds}\left[(\partial_s\eb,\Lambda \pbt)+b(\e+2\Lambda\eb,\Lambda \pbt)\right]\\
&-& b_s\left[(\partial_s\eb+b\Lambda \e,\Lambda {\partial_b \pbt})+(\e,\Phi_b)\right]
\eee
with
$$\Phi_b=-\Lambda \pbt-\Lambda^2\pbt-b\Lambda{\partial_b \pbt}-b\Lambda^2{\partial_b \pbt}.
$$
We thus estimate from \fref{estlowerenergy}, \fref{defeta}, \fref{estcepueip}, \fref{poitwisebsboot}, \fref{poitnwiseboundboot}:
\bee
& & |b_s|\left|(\partial_s\eb+b\Lambda \e,\Lambda {\partial_b \pbt})+(\e,\Phi_b)\right|\lesssim  |b_s|\left[\int_{\frac{B_0}{4}\leq y\leq B_0} \frac{|\eta|}{y}+\int_{y\leq B_0}\frac{|\e|}{1+y^2}\right]\\
& \lesssim & \frac{|b_s||\log b|}{b^2}C(M)
\sqrt{|\mathcal E|+\sqrt{K(M)}\frac{b^4}{|\log b|^2}}\lesssim K(M)\frac{b^2}{\sqrt{|\log b|}}
\eee

The non linear term is estimated as previously. Indeed, we have:
\bee
\left|(N(\e),\Lambda \pbt)\right|  & \lesssim & \int (|P_{B_1}| + |\e|)\e^2 |\Lambda \pbt|\\
												& \lesssim & \dfrac{1}{b^2} \|y (|P_{B_1}| +|\e|)\|_{L^{\infty}} \|(1+y^2)\Lambda \pbt\|_{L^{\infty}}\int_0^{B_0} \dfrac{|\e|^2}{y(1+y^4)} \\
												& \lesssim &  \dfrac{C(M)}{b^2} \left[ \mathcal E + K(M)\dfrac{b^4}{|\log b|^2}\right]
												 \lesssim  K(M)\frac{b^2}{\sqrt{|\log b|}}.
\eee

{\bf step 5} Control of $G(b)$ and ${\mathcal{I}}$.\\

Injecting the estimates of step 1 and step 2 into \fref{vhoheoheo} yields  \fref{firstcontrol}. It remains to prove \fref{estkeyfb}. The estimate for $G(b)$ is a straightforward consequence of the choice \fref{defpbtzero} and the explicit formula \fref{defqexplicite}. It remains to control $\mathcal I$. We integrate by parts in space in \fref{defiun} to rewrite:
$${\mathcal I}(s)=(\partial_s\e+b\Lambda \e,\Lambda \pbt)-b(\e,\Lambda \pbt+\Lambda^2\pbt)+b_s({\partial_b \pbt},\Lambda \pbt)-b_s \left({\partial_b}(P_{B_1}-\pbt), \Lambda \pbt)\right ).
$$
The $b$ terms are estimated as in step 1:
$$|b_s|\left|({\partial_b \pbt},\Lambda \pbt)-({\partial_b}(P_{B_1}-\pbt), \Lambda \pbt)\right|\lesssim  \frac{|b_s|}{b}\lesssim b.$$ 
The linear term is estimated using \fref{estlowerenergy}, \fref{defeta}, \fref{estcepueip}, \fref{poitwisebsboot}, \fref{poitnwiseboundboot}:
\bee
& & \left|(\partial_s\e+b\Lambda \e,\Lambda \pbt)-b(\e,\Lambda \pbt+\Lambda^2\pbt)\right|\lesssim   \int_{y\leq B_0}\frac{|\eta|}{y^2}+b\int_{y\leq B_0}\frac{|\e|}{y^2}\\
& \lesssim & \frac{1}{b}\left(\int\frac{|\eta|^2}{y^2}\right)^{\frac{1}{2}}+\frac{|\log b|}{b^2}\left(\int_{y\leq B_0}\frac{|\e|^2}{y^4(1+|\log y|^2)}\right)^{\frac{1}{2}}\lesssim  {K(M)} b
\eee
and \fref{estkeyfb} is proved.\\

This concludes the proof of Lemma \ref{lemmaalgebra}.


\section{Sharp description of the singularity formation}
\label{sectionfour}


We are now in position to conclude the proof of Proposition \ref{propcle} and Theorem \ref{thmmain} as a simple consequence of the a priori bounds obtained in the previous section.The proof relies on a topological argument which closes the bootstrap argument, and then the sharp description of the blow up dynamic is a consequence of the a prori bounds obtained on the solution and in particular the modulation equation \fref{firstcontrol}.\\

{\bf Proof of Proposition \ref{propcle}}\\

We argue by contradiction and assume that for all $a_+\in\left[-\frac{b_0^2}{|\log b_0|},\frac{b_0^2}{|\log b_0|}\right],$ $$T_1(a_+)<T(a_+).$$
In view of the Definition \ref{lemmaimprovedbounds} of the bootstrap regime and the improved bounds of Lemma \ref{lemmaimprovedbounds} and Lemma \ref{lemmaunstable}, a simple continuity argument ensures that $T_1(a_+)$ is attained at the first time $t$ where
\be
\label{cneoheoheoei}
|\kappa_+(t)| = \dfrac{|b(t)|^2}{2|\log(b(t))|}.
\ee
The fundamental fact now is the outgoing behaviour \fref{outgoingdirection} which together with \fref{cneoheoheoei} ensures $$\left|\frac{d\kappa_+}{dt}(T_1(a_+))\right|>0.$$ Thus from standard argument\footnote{see \cite[Lemma 6]{cotemartelmerle} for a complete exposition}, the map $$
\begin{array}{rcl}
\left[-\frac{b_0^2}{|\log b_0|},\frac{b_0^2}{|\log b_0|}\right] & \to & \Bbb R*_+ \\
a_+  & \mapsto & T_1(a_+)
\end{array} \mbox{is continuous}.
$$
We may thus consider the continuous map:
$$
\begin{array}{cccc}
\Phi : & [-\frac{b_0^2}{|\log b_0|}, \frac{b_0^2}{|\log b_0|}] & \to &  \mathbb R\\
	     & a_+ & \to &  \kappa_+(T_1(a_+))\frac{2|\log b(T_1(a_+))|}{b^2(T_1(a_+))}
\end{array}
$$ 
On the one hand, \fref{cneoheoheoei} implies:
$$
\Phi \left(  \left[-\frac{b_0^2}{|\log b_0|}, \frac{b_0^2}{|\log b_0|}\right]\right) \subset \{-1,1\}. 
$$
On the other hand, the outgoing behavior \fref{outgoingdirection} together with the initialization $\kappa_+(0)=a_+$ ensures:
$$
\Phi\left(-\frac{b_0^2}{|\log b_0|}\right)=-1, \ \ \Phi\left(\frac{b_0^2}{|\log b_0|}\right)=1
$$ 
and a contradiction follows.\footnote{This topological argument is of course the one dimensional version of Brouwer's fixed point argument used in \cite{cotemartelmerle}.}This concludes the proof of Proposition \ref{propcle}.\\

{\bf Proof of Theorem \ref{thmmain}}\\

{\bf step 1} Finite time blow up and derivation of the blow up speed.\\

\noindent Let from Proposition \ref{propcle} an initial data with $T_1(a_+)=T(a_+)$. We first claim that $u$ blows up in finite time 
\be
\label{taplusfintie}
T=T(a_+)<+\infty.
\ee
Indeed, from \fref{eq:idon}, $$\lambda^{2(1-\alpha)}\lesssim b^3\ \  \mbox{and thus} \ \ \lambda^{\frac 23}\lesssim \lambda^{\frac{2(1-\alpha)}{3}}\lesssim b=-\lambda_t.$$ Integrating this differential inequation yields $$t\lesssim \lambda^{\frac 13}(0)-\lambda^{\frac 13}(t)\lesssim 1$$ and \fref{taplusfintie} follows. The $(\dot{H}^1\cap \dot{H}^2)\times (L^2\cap \dot{H^1})$ bounds \fref{bootneergynorm2}, \fref{poitnwiseboundboot} on $(\e,\pa_t\e)$ and hence on $(u,\pa_tu)$ in the bootstrap regime and standard $H^2$ local well posedness theory ensure that blow up corresponds to $$\lambda(t)\to  0 \ \ \mbox{as} \ \ t\to T(a_+).$$ We now derive the blow up speed by reintegrating the ODE \fref{firstcontrol} and briefly sketch the proof which follows as in \cite{RaphRod}.\\
First recall the standard scaling lower bound $$\lambda (t)\leq C(u_0)(T-t)$$ which implies that the rescaled time is global: $$s(t)=\int_0^t\frac{d\tau}{\lambda(\tau)}\to +\infty \ \ \mbox{as} \ \ t\to T.$$ Let $$\mathcal J=G+\mathcal I$$ so that from \fref{estkeyfb}:
\be
\label{estmathcaJ}
\mathcal J=64 b|\log b|\left(1+O\left(\frac{1}{|\log b|}\right)\right) \ \ \mbox{ie} \ \ b=\frac{\mathcal J}{64|\log \mathcal J|}\left(1+O\left(\frac{1}{\sqrt{|\log \mathcal J|}}\right)\right)
\ee
and $\mathcal J$ satisfies from \fref{firstcontrol} the ODE: 
$$\mathcal J_s+\frac{\mathcal J^2}{128|\log \mathcal J|^2}\left(1+O\left(\frac{1}{\sqrt{|\log \mathcal J|}}\right)\right)=0.
$$We multiply the above by $\frac{|\log \mathcal J|^2}{\mathcal J^2}$, integrate in time and obtain to leading order:$$\mathcal J=\frac{128(\log s)^2}{s}\left(1+O\left(\frac{1}{\sqrt{|\log s|}}\right)\right) \\  \ \mbox{ie} \ \ -\lsl=b=\frac{2\log s}{s}\left(1+O\left(\frac{1}{\sqrt{|\log s|}}\right)\right).$$ where we used \fref{estmathcaJ}. Integrating this once more in time yields: $$-\log \lambda=(\log s)^2\left(1+O\left(\frac{1}{\sqrt{|\log s|}}\right)\right)$$ and thus $$b=-\lambda_t=\exp\left(-\sqrt{|\log \lambda|}\left(1+O\left(\frac{1}{|\log \lambda|^{\frac 14}}\right)\right)\right).$$ Integrating this from $t$ to $T$ where $\lambda(T)=0$ yields the asymptotic $$\lambda(t)=(T-t)\exp\left(-\sqrt{|\log \lambda(t)|}\left(1+O\left(\frac{1}{|\log \lambda(t)|^{\frac 14}}\right)\right)\right)$$ which yields \fref{lawlambda}.\\

{\bf step 2} Energy quantization.\\

It remains to prove \fref{energyquantization} which can be derived exactly as in \cite{RaphRod}, this is left to the reader.\\ 
This concludes the proof of Theorem \ref{thmmain}.


\begin{appendix}


\section{Modulation theory}
\label{app_modulation}

This appendix is devoted to the proof of Lemmas \ref{definitionadmissible} and \ref{smalldata}.
The arguments are standard in the framework of modulation theory and we briefly sketch the main computations.\\

\subsection{Proof of Lemma \ref{definitionadmissible}}
First note that the bounds  $$|\nabla ({P_{B_1}} -Q) |_{L^2}+b|\Lambda P_{B_1} - b (1-\chi_{B_1}) \Lambda Q|_{L^2}\lesssim b|\log b|$$ ensure that our initial data are of the form 
$$u_0=Q+\tilde{\eta}_0, \ \ u_1=\tilde{\eta}_1$$
for a small excess of energy in the sense that:
\be \label{petitesse}
\|\nabla\tilde{\eta}_0,\tilde{\eta}_1\|_{L^2\times L^2} \lesssim b_0 |\log b_0|, 
\ \ 
\|\nabla^2 \tilde{\eta}_0,\nabla\tilde{\eta}_1\|_{L^2\times L^2}\lesssim b_0,
\ee
Hence the continuity of the flow associated to \eqref{wave} ensures the existence of a time $T_0>0$ (uniform in $\tilde{\eta}_0,\tilde{\eta_1}$)
for which the solution $u$ to \eqref{wave}  $(u_0,u_1)$ satisfies on $[0,T_0]$:
\be \label{boundsol}
\sup_{[0,T_0]} \|\nabla(u-Q),\partial_tu \|_{L^2\times L^2} \lesssim b_0  |\log b_0|, . 
\ee

\noindent{\bf Step 1 :} Modulation near $Q.$ \\ 
The non degeneracy
$(\Lambda Q,\Phi)\neq 0$ ensures\footnote{as a direct consequence of the implicit function theorem and the smoothness of the flow \fref{wave}} that $u$  admits on $[0,T_0]$ a decomposition 
\be
\label{defdecomp}
u(t)=(Q+\tilde{\e}(t))_{\lambda(t)}
\ee 
with: 
\be
\label{orthotildebis}
(\tilde{\e}(t),\chi_M\Phi)=0. 
\ee 
Moreover, $\lambda \in \mathcal{C}^2([0,T_0]; \mathbb R^*_+)$ and noting that $\tilde{\eta}_0$ satisfies 
$$
|(\tilde{\eta}_0,\chi_M \Phi) | \lesssim \frac{b_0^2}{|\log b_0|},
$$
we obtain the bound: 
\be
\label{firstbound}
|\lambda(0)-1|\lesssim\frac{b_0^2}{|\log b_0|}.
\ee
We then let $b(t)=-\lambda_t(t)$ on $[0,T_0].$\\

\noindent{\bf Step 2 :} Positivity of $b.$ \\ 
Straightforward computations yield:
$$
\partial_t \tilde{\e}(t) = \left(\partial_t u - \dfrac{b(t)}{\lambda(t)} \Lambda u \right)_{\frac{1}{\lambda(t)}}  .
$$
Taking the scalar product with $\chi_M \Phi,$ we obtain at the initial time:
\be
\label{nitnisnodbweor}
b(0) = \lambda(0) \dfrac{((u_1)_{\frac{1}{\lambda(0)}}, \chi_M \Phi)}{ ( (\Lambda u_0)_{\frac{1}{\lambda(0)}} , \chi_M\Phi)},
\ee
where  \eqref{defpb} together with \eqref{firstbound} imply:
\bea
((u_1)_{\frac{1}{\lambda(0)}}, \chi_M \Phi) &=& b_0 (\Lambda Q,\chi_M \Phi) + O\left( \frac{b_0^2}{|\log(b_0)|}\right), 
\\
((\Lambda u_0)_{\frac{1}{\lambda(0)}}, \chi_M\Phi) &=& (\Lambda Q, \chi_M \Phi) + O\left( {b_0^2}{|\log(b_0)|}\right).
\eea
This yields the positivity of $b(0)$ and moreover, the positivity of $b(t)$ for small time together with:
\be \label{controlb0precis}
b(t) = b_0 + O\left(\dfrac{b_0^2}{|\log(b_0)|}\right)
\ee
As $b>0,$ we may introduce the decomposition:
\be
\label{defdecompbis}
u(t)=(Q+\tilde{\e})_{\lambda(t)}=(P_{B_1(b(t))}+\e)_{\lambda(t)} \ \ \mbox{ie} \ \ \e(t)=\tilde{\e}(t)-(P_{B_1(b(t))}-Q).
\ee
Observe from \fref{orthotone}, \fref{orthotildebis} that 
\be
\label{orthoeappendix}
\forall \, t\in [0,T_0], \ \  (\e(t),\chi_M\Phi)=0.
\ee
The uniqueness of such a decomposition is guaranteed by the (local) uniqueness of $(\lambda,\tilde{\e}).$\\

\noindent{\bf Step 3 :} Smallness of $\e.$ \\ 
To complete the proof, we obtain the smallness of $\e$ in $\dot{H}^1$ and $\dot{H}^2.$ To this end, we note that:
$$
\e(0) = \left( u_0 \right)_{\frac{1}{\lambda(0)}} - P_{B_1(b(0))} = \left[ \left( P_{B_1(b_0)}\right)_{\frac{1}{\lambda(0)}} -  P_{B_1(b(0))} \right] + \left(\eta_0  +d_+ \psi \right)_{\frac{1}{\lambda(0)}}  .
$$
Simple computations based on the estimates of Proposition \ref{propqb}  yield the expected result :
\be \label{initiale0_1}
\|\nabla \e(0)\|_{L^2} \lesssim b_0|\log(b_0)|\qquad  \left\|\dfrac{\e(0)}{1+y^4}\right\|_{L^2} + \|\nabla^2 \e(0)  \|_{L^2} \lesssim \dfrac{b_0^2}{|\log(b_0)|}.
\ee

\subsection{Proof of Lemma \ref{smalldata}}
 The proof of this lemma is divided into two steps.
First, given $(\eta_0,\eta_1,d_+)$ satisfying smallness condition
\eqref{defmotionintia} for small $b_0,$ we prove that $b,b_s$ and $w$ satisfy \eqref{controllambdaboot}--\eqref{poitnwiseboundboot}. Then, we show that,
given $(b_0,\eta_0,\eta_1)$, we can apply the inverse mapping theorem to  $d_+ \mapsto \kappa_+(0)$  close to $0.$
The arguments are standard and we refer to \cite{cotemartelmerle} for a detailed proof in a similar setting.\\

\noindent{\bf Step 1:} Smallness of initial modulation given $(\eta_0,\eta_1,d_+)$.\\
Given  $(\eta_0,\eta_1,d_+)$ satisfying smallness condition \eqref{defmotionintia} we can apply Lemma \ref{defmotionintia} this yields $T_0$ and $b,\e,w$ such that
\eqref{controllambdaboot} holds and 
\be \label{firstboundw}
\|\nabla w(t)\|_{L^2} \lesssim b_0|\log(b_0)| \quad \|\nabla^2w (t)\|_{L^2} \lesssim \dfrac{b_0^2}{|\log(b_0)|^2} 
\ee 
We emphasize in particular that Lemma \ref{definitionadmissible} implies $ b_0/2<b(0) < 2b_0$ for sufficiently small $b_0.$

As previously, we focus now on bounds satisfied initially. We first compute $b_s(0)$ using \fref{wave} and the orthogonality condition \fref{orthoeappendix}.  
Recalling that $(\partial_b^{k} P_{B_1}, \chi_M \Phi) = (\partial_s^{k-1} \e, \chi_M \Phi) = 0$ for any integer $k,$ we get like for \fref{elgebraba}:
\begin{multline*} b_s\left[(\Lambda P_{B_1},\chi_M\Phi)+2 b( \Lambda \partial_b P_{B_1},\chi_M\Phi)+(\Lambda\ebo,\chi_M\Phi)\right]\\
=  -(\Psi_{B_1},\chi_M\Phi) -  (\ebo,H_{B_1}^*(\chi_M\Phi))+b(\partial_s\ebo, \Lambda (\chi_M\Phi)) + (N(\ebo),\chi_M\Phi)
\end{multline*}
where, denoting LHS and RHS the left-hand and right-hand side at initial time, we compute, for  sufficiently small $b_0$ w.r.t. $M$ : 
\be
\label{cwouweofieo}
|RHS| \leq  C(M) \left( \dfrac{b_0^2}{ |\log(b_0)|} + \|\partial_s \e\|_{L^2(y < M)} \right) , \quad \frac{|b_s(0)|}{2} (\Lambda Q, \chi_M \Phi) \leq |LHS|. 
\ee
On the other hand, after time-differentiation, we obtain :
\be
\label{trhidfrimula}
\partial_s \e (0) = \lambda(0)\partial_t\e(0) =  -b_s(0){\partial_b P_{B_1(b(0))}}-b(0)\Lambda u_0
 +  \lambda(0) \left(b_0\Lambda P_{B_1(b_0)}\right)_{\frac{1}{\lambda(0)}}.
\ee
Observe now from \fref{estpbcut} that $$\left\|{\partial_b P_{B_1(b_0)}}\right\|_{L^2(y\leq 2M)}\lesssim C(M) b_0\leq \sqrt{b_0}$$ 
which together with  \fref{firstbound}, \fref{controlb0precis} and \fref{defmotionintia} yields: 
\be
\label{cnekoheioheof}
\|\pa_s\e(0)\|_{L^2(y\leq 2M)}=\lambda(0)\|\partial_t\e(0)\|_{L^2(y\leq 2M)} \lesssim \frac{b_0^2}{|\log b_0|} + |b_s(0)|\sqrt{b_0},
\ee 
which together with \fref{cwouweofieo} concludes the proof of the initial bound \eqref{poitwisebsbootsd} on $b_s$.

Then, we compute :
$$
\partial_t w(0) = u_1 - \left( \dfrac{b_s(0)}{\lambda(0)} \partial_b P_{B_1(b(0))} + \dfrac{b(0)}{\lambda(0)} \Lambda P_{B_1(b(0))} \right)_{\lambda(0)}
$$ 
so that, introducing \fref{trhidfrimula} and previous estimates on $b(0),$ we compute : 
$$
\|\partial_tw (0)+\dfrac{b(0)}{\lambda(0)}((1-\chi_{B_1(b(0))})\Lambda Q)_{\lambda(0)}\|_{L^2}\lesssim b_0 |\ln(b_0)| \leq \sqrt{b_0}. 
$$
and 
\be \label{initiale0_2}
\|\nabla\partial_t w(0)\|_{L^2} \lesssim \frac{b_0^2}{|\log b_0|},
\ee
Together with \eqref{firstboundw}, this yields \eqref{bootneergynormsd} and  \eqref{poitnwiseboundbootsd}.

Finally, straightforward computations yield:
$$
\kappa_- = \dfrac{1}{2} (\e, \psi ) - \frac{1}{\zeta}(\partial_s \e, \psi) - \frac{b_s}{2 \zeta} \left(\partial_b P_{B_1},\psi \right),
$$  
Consequently, we apply \eqref{poitnwiseboundbootsd}, noting that $w(t) = (\e(t))_{\lambda(t)}$, 
and \eqref{trhidfrimula} because of the exponential decay of  $\psi$ to compute
\be \label{initialG}
|\kappa_-(0)| \lesssim \dfrac{b_0^2}{|\log b_0|}
\ee

\noindent{\bf Step 2:} Computation of $d_+.$\\
We now claim from an explicit compuation that given $a_+$, the initialization \fref{intialisaifjeo} can be reformulated in the form 
\be
\label{movemtndplus}
F(d_+)=a_+ \ \ \mbox{with} \ \ \frac{\partial F}{\partial d_+}|_{d_+=0}=\frac{\|\psi\|_{L^2}^2}{2}+O(b_0)
\ee which from the implicit function theorem concludes the proof of Lemma \ref{smalldata}.\\
Let us briefly justify \fref{movemtndplus}. We want to study the mappping
$$
\begin{array}{rcl}
\mathcal{V} & \rightarrow & \mathbb R^4 \\
d_+ &\longmapsto& \left[b(t), b_s(t), (\varepsilon(0),\psi), (\partial_s \varepsilon(t),\psi) \right] 
\end{array}
$$
where $\mathcal{V}$ is a neighborhood of $0.$ To this end, it is necessary to study the dependencies of all initial
parameters on $d_+.$ For conciserness, we denote by $d$ differentiation w.r.t. $d_+$ in what follows\\
{\noindent {\bf Computation of $(\lambda(0),\tilde{\varepsilon}(0))$}}. As a first step in the modulation theory, we proved that $(\lambda(0),\tilde{\varepsilon}(0)) = \Phi(u_0)$ where
$\Phi$ is a smooth mapping $\dot{H}^1(\mathbb R^N) \to \mathbb R \times \dot{H}^1(\mathbb R^N)$ defined in a neighborhood
of $Q.$ Due to the exponential decay of $\psi \in \mathcal{C}^{\infty}(\mathbb R^N)$ we thus have that $\lambda(0)$
is a smooth function of $d_+$ with differential $d\lambda(0) = d\lambda \in \mathbb R.$  We have the same result for $\varepsilon$
with differential $d\tilde{\varepsilon}(0) = d\tilde{\varepsilon} \in \dot{H}^1(\mathbb R^N).$ By definition, we have 
$$
\tilde{\varepsilon}(0) = u_0 - Q_{\frac{1}{\lambda}} 
$$
so that:
$$
d \tilde{\varepsilon} = \psi + \frac{d\lambda}{\lambda(0)} (\Lambda Q)_{\frac{1}{\lambda(0)}}. 
$$
{\noindent {\bf Computation of $b(0)$}}: From \fref{nitnisnodbweor}, $b(0)$ is a $\mathcal C^1$ mapping with:
\bee
db(0)  & = &  d\lambda \left[  \dfrac{((u_1)_{\frac{1}{\lambda(0)}}, \chi_M \Phi)}{ ( (\Lambda u_0)_{\frac{1}{\lambda(0)}} , \chi_M\Phi)}
			+ \dfrac{( (\Lambda^2 u_0)_{\frac{1}{\lambda(0)}} , \chi_M\Phi) - ((\Lambda u_1)_{\frac{1}{\lambda(0)}}, \chi_M \Phi)}{ ( (\Lambda u_0)_{\frac{1}{\lambda(0)}} , \chi_M\Phi)^2}\right]\\
			&
			- & \lambda(0)  \dfrac{((u_1)_{\frac{1}{\lambda(0)}}, \chi_M \Phi) ( (\Lambda \psi)_{\frac{1}{\lambda(0)}} , \chi_M\Phi)}{ ( (\Lambda u_0)_{\frac{1}{\lambda(0)}} , \chi_M\Phi)^2}
\eee
where (A.6) and (A.7) ensure that, for some $db \in \mathbb R,$ there holds : 
$$
db(0) = db + O(b_0).
$$

{\noindent {\bf Computation of $\varepsilon(0)$}}: Next,
$$
\varepsilon(0)=\tilde{\varepsilon}(0)-(P_{B_1(b(0))}-Q)
$$
Consequently, $(\varepsilon(0),\psi)$ is also a smooth function of $d_+$ with derivative $dps_1(0)$ satisfying
$$
d ps_1(0)= (d\tilde{\varepsilon},\psi) - db(0) (\partial_b P_{B_1(b(0))},\psi)
$$
Replacing $d\tilde{\varepsilon}$ by its values, and applying that $(\Lambda Q ,\psi) = 0$ together with $|\lambda(0)-1| \lesssim b_0^2/|\log(b_0)|,$
we get:
$$
(d\tilde{\varepsilon},\psi) = \|\psi\|_{L^2}^2  + O (b_0) 
$$
so that:
$$
d ps_1(0)= \|\psi\|^2_{L^2} + O(b_0).
$$
{\noindent {\bf Computation of $\partial_s\varepsilon(0) + b_s(0){\partial_b P_{B_1(b(0))}}$}}: From \fref{trhidfrimula},$$
\partial_s\varepsilon(0) =  -b_s(0){\partial_b P_{B_1(b(0))}}-b(0)\Lambda u_0 +  \lambda(0) \left(b_0\Lambda P_{B_1(b_0)}\right)_{\frac{1}{\lambda(0)}}
$$
so that  $(\partial_s\varepsilon(0) + b_s(0){\partial_b P_{B_1(b(0))}},\psi) $ is a smooth function of $d_+$ with derivative :
$$
d ps_2(0) = -db(0) (\Lambda u_0,\psi) + d\lambda \left( \left[ \left(b_0\Lambda P_{B_1(b_0)}\right)_{\frac{1}{\lambda(0)}} + \left(b_0\Lambda^2 P_{B_1(b_0)}\right)_{\frac{1}{\lambda(0)}}\right] ,\psi\right) -
 b(0)(\Lambda \psi,\psi),
$$
where, for the same orthogonality reason $(\Lambda Q,\psi) = 0,$ we have: 
$$
(\Lambda u_0, \psi) = (\Lambda Q,\psi) + O(b_0) = O(b_0)
$$
Consequently $d ps_2(0) = O(b_0).$\\
{\noindent {\bf Conclusion}}: Finally, there holds 
$$
\kappa_+(0)  = \dfrac{1}{2} \left[ (\varepsilon(0), \psi) + \frac{1}{\sqrt{\zeta}} (\partial_s \varepsilon(0) + b_s (0) \partial_b P_{B_1(b(0))}, \psi) \right]. 
$$
and $\kappa_+ (0) = a_+$ reduces to a simple 1D equation $F(d_+) = a_+$ with $F$ computed as combination of the above functions so that
it is smooth in a neighborhood of $0.$ Moreover, there holds:
$$
d F = \dfrac{1}{2} \left[ d ps_1(0)  + \dfrac{1}{\sqrt{\zeta}} d ps_2(0)\right] = \dfrac{\|\psi\|_{L^2}^2}{2} + O(b_0),
$$
and \fref{movemtndplus} is proved. This concludes the proof of Lemma \ref{smalldata}.


\section{Coercivity estimates} \label{app_coerc}
The aim of this section is a proof of the coercivity properties
of the quadratic form:
$$
B(\eta,\eta) =(\mathcal Bv,v)= \int_{\mathbb{R}^4} |\partial_r \eta|^2  + \int_{\mathbb{R}^4} W \eta^2
$$
where
$$
W(r) = 2V  + \frac{3}{2} \ r  V' = \dfrac{6}{(1+{r^2}/{8})^2}-\dfrac{9}{4}  \dfrac{r^2}{(1+{r^2}/{8})^3}
$$
We use the elementary method developed in \cite{FMR}. The coercitivity property of Lemma \ref{lemmacoercitivity} is a consequence of the two following facts. First the index of $B$ on $\dot{H}^1_{r}=\{u\ \ \mbox{radial} \ \ \mbox{with} \ \ \int|\nabla u|^2+\int\frac{u^2}{r^2}<+\infty\}$ is at most 2. From standard Sturm Liouville oscillation theorems, see Theorem XIII.8 \cite{ReedSimon}, this is equivalent to counting the number of zeroes of \begin{equation} \label{eq_Z}
\left\{
\begin{array}{ll}
\mathcal BU= 0 & \text{on $(0,\infty)$}, \\[10pt]
U(0) =1 \quad U'(0)=0, & 
\end{array}
\right.
\end{equation}
and this can be analytically reduced to counting the number of zeroes of a Bessel function. Then we need to show that the orthogonality conditions $(\eta,\psi)=(\eta,\Phi)=0$ are enough to treat the two negative directions. Arguing exactly as in \cite{FMR}, see also \cite{MS}, this is equivalent to first invert the operator $\mathcal B$ on $\dot{H}^1_{rad}$, and then show that $B$ restricted to $Span\{\mathcal B^{-1}\psi, \mathcal B^{-1} \Phi\}$ is definite negative, which is an elementary numerical check. We shall check these two facts below and refer to \cite{FMR} for the proofs that this implies the claimed coercitivity property. Note that the proofs in \cite{FMR} are given for exponentiallly decaying functions and potentials, but one checks easily that the decay of the potential $|W(r)|\sim \frac{1}{r^4}$ at infinity and $|\Phi(r)|\sim \frac{1}{r^4}$ are more than enough to have all proofs go through.\\

\begin{figure}[h] 
\begin{center}
\includegraphics[scale=.7]{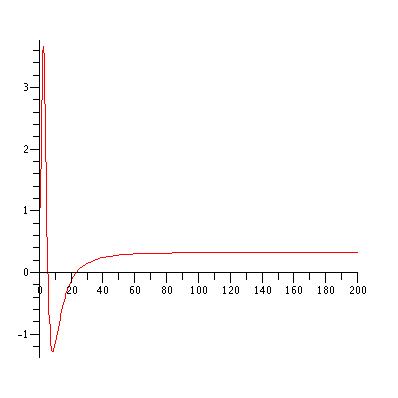}
\end{center}
\caption{Solution to \eqref{eq_Z} computed by MAPLE}
\label{fig_U}
\end{figure}

\subsection{Computation of the index of $B$}
We claim:
\begin{lemma}[Derivation of the index]
\label{lemmaindex}
The index of $\mathcal B$ on $\dot{H}^1_r$ is at most 2..
\end{lemma}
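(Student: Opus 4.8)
The plan is to identify the Morse index of $\mathcal B$ with the oscillation index of the Cauchy problem \eqref{eq_Z} and then to bound the number of zeros of that distinguished solution. First, since $W$ is bounded and satisfies $|W(r)|\lesssim (1+r)^{-4}$, the radial operator $\mathcal B=-\Delta+W$ on $\dot H^1_r$ has essential spectrum $[0,+\infty)$ and only finitely many negative eigenvalues; by the Sturm--Liouville oscillation theorem in the form of Theorem XIII.8 of \cite{ReedSimon}, adapted to the $4$-dimensional radial Laplacian (the endpoint $r=0$ poses no problem since $W(0)=6<\infty$ and the relevant solution is the one bounded at the origin), this number equals the number of zeros on $(0,\infty)$ of the solution $U$ of \eqref{eq_Z}. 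The normalization is fixed by the comparison operator $-\Delta$ itself, for which $U\equiv 1$ has no zero and the index is $0$; hence \emph{the index of $\mathcal B$ is exactly the number of interior zeros of $U$}, and it suffices to show that $U$ has at most two of them.

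Second, I would put $\mathcal B U=0$ into an explicitly integrable normal form. Because $W$ is a rational function of $r^2$, the substitution $s=r^2$ turns \eqref{eq_Z} into a Fuchsian equation whose only singular points are $s=0$, $s=-8$ and $s=\infty$; the M\"obius change of variable $\xi=\frac{s/8}{1+s/8}\in(0,1)$ then sends the two physical endpoints $r=0$ and $r=+\infty$ to $\xi=0$ and $\xi=1$, and after factoring out the decay of $U$ at infinity through a prefactor $U=(1-\xi)^{\mu}g(\xi)$ one is left with a second order ODE for $g$ on the bounded interval $(0,1)$ with regular singular points at both endpoints. Passing to the Liouville normal form $-\phi''+W_{\mathrm{eff}}\phi=0$ (with $\phi=r^{3/2}U$, so that $W_{\mathrm{eff}}=\tfrac{3}{4r^2}+W$, the $\tfrac{3}{4r^2}$ being the centrifugal term of the radial Laplacian in dimension $4$) and unwinding the changes of variables, the zeros of $U$ on $(0,\infty)$ are in bijection with the zeros, on a \emph{bounded} interval, of a solution of a Bessel-type equation — this is the reduction announced before \eqref{eq_Z}. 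The structural point making the count finite and small is that $W>0$ on $(0,2)$ and $W<0$ on $(2,\infty)$, and more precisely that $W_{\mathrm{eff}}<0$ only on a bounded interval $(r_1,r_2)$ with $2<r_1<r_2<\infty$.

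Third comes the actual zero count. On $(0,r_1)$ one has $W_{\mathrm{eff}}\ge 0$, so $U$ cannot vanish there; on $(r_2,\infty)$ the same positivity forbids any further oscillation; it remains to bound the number of sign changes of $U$ on the bounded interval $(r_1,r_2)$. There I would use Sturm's comparison theorem against the explicitly solvable equation obtained by replacing $W_{\mathrm{eff}}$ by a constant (or $\tfrac{c}{r^2}$) lower bound — equivalently, against a Bessel function whose argument ranges over a bounded interval — and a direct estimate of the "travel time" $\int_{r_1}^{r_2}\sqrt{-W_{\mathrm{eff}}}$ shows it stays below the threshold that would force a third zero. Hence $U$ has at most two zeros and the index is $\le 2$, which is exactly what Lemma \ref{lemmaindex} asserts and precisely the number of negative directions ($\psi$ and, essentially, $\Phi$) that Lemma \ref{lemmacoercitivity} must handle. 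The explicit numerical integration displayed in Figure \ref{fig_U} corroborates this and shows that $U$ has in fact exactly two zeros.

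The main obstacle is this last step: making the zero count rigorous in the presence of the sign change of $W$ (hence of $W_{\mathrm{eff}}$). Sturm comparison has to be run piecewise, with comparison potentials chosen separately on $(0,r_1)$, $(r_1,r_2)$ and $(r_2,\infty)$, and one must check that no spurious oscillation is introduced near the transition radii where $W_{\mathrm{eff}}$ vanishes and the naive comparison degenerates. The alternative of identifying $g$ with a concrete special function is no easier, since after the M\"obius reduction the remaining coefficient is not of hypergeometric type (the equation is Heun-like), so quoting its oscillation properties is itself delicate; following \cite{FMR}, \cite{MS} I would therefore rely on the robust Sturm-theoretic argument and use the numerical picture only as a guide and consistency check.
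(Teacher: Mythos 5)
Your first step (reducing the index bound to counting zeros of the normalized solution $U$ of \fref{eq_Z}) is the same as the paper's, and your observation that $U$ cannot vanish where the effective potential is nonnegative is sound. The genuine gap is the zero count on the region where $W_{\mathrm{eff}}=\frac{3}{4r^2}+W<0$, which you yourself flag as the main obstacle: the ``travel time'' criterion you invoke is not a theorem as stated. The modified Pr\"ufer estimate gives, between consecutive zeros, $\pi\leq \int\sqrt{-W_{\mathrm{eff}}}+\frac12\int\bigl|\partial_r\log(-W_{\mathrm{eff}})\bigr|$, and the correction term does not vanish and degenerates precisely at the transition radii where $W_{\mathrm{eff}}$ changes sign; the crude version (gap $\geq \pi/\sqrt{\sup(-W_{\mathrm{eff}})}$) is hopeless here because the negativity region is long: writing $x=r^2/8$ one has $W=\frac{6-12x}{(1+x)^3}$, and $W_{\mathrm{eff}}<0$ exactly where $x^3-125x^2+67x+1<0$, i.e.\ roughly $2.2\lesssim r\lesssim 31$, while $\sup(-W_{\mathrm{eff}})\approx 0.7$, so the sup-based bound permits on the order of ten zeros. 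Even the heuristic value of $\int\sqrt{-W_{\mathrm{eff}}}\,dr$ is of size comparable to $2\pi$, so no clean ``at most two'' count follows from it. Two further points: on $(r_2,\infty)$ positivity of $W_{\mathrm{eff}}$ only forbids \emph{two} zeros there (the convexity argument rules out consecutive zeros, not a single crossing if $\phi(r_2)$ and $\phi'(r_2)$ have opposite signs), so your decomposition still admits a third zero in the tail unless you track the Cauchy data at $r_2$; and your suggestion that the reduction of the full equation leads to a Bessel-type equation is incorrect — as you note it is Heun-like — so there is no explicit special function to fall back on for the genuine potential $W$.

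The paper closes exactly this step by a different and softer device: it first replaces $W$ by the pointwise smaller potential $\hat W(r)=-\frac{3}{2}\,\frac{r^2}{(1+r^2/8)^3}$ (discarding the positive part of $W$), so that by Sturm comparison $U$ has at most as many zeros as the solution $\hat U$ of \fref{eq_Zhat}; the point of this particular $\hat W$ is that the substitutions $\hat U(r)=\frac{2}{r^2}\overline U(r^2/2)$ and $\overline U(s)=\sqrt{1+s/4}\;\widetilde U\bigl(1/\sqrt{1+s/4}\bigr)$ transform \fref{eq_Zhat} \emph{exactly} into a Bessel equation on the bounded interval $(0,1)$, so that $\widetilde U$ is an explicit combination of $J_1(4\sqrt6\,\tau)$ and $Y_1(4\sqrt6\,\tau)$ whose zeros (two of them, with a nonzero limit of $\hat U$ at infinity) are read off by a direct numerical evaluation. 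In other words, the quantitatively delicate oscillation estimate you would need is precisely what the paper avoids by sacrificing sharpness in the potential to gain exact solvability; as written, your proposal does not yet contain a complete proof of the ``at most two zeros'' count, and closing it along your route would require a substantially more careful piecewise Sturm argument than the travel-time heuristic.
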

\begin{proof}
First,  we note that $W(r) \geq \hat{W}(r)$ where:
$$
\hat{W}(r) = -\frac{3}{2}  \dfrac{r^2}{(1+{r^2}/{8})^3}.
$$
Hence, classical Sturm-Liouville theory ensures that $U$ has less zeros than $\hat{U}$ 
the unique solution to :
\begin{equation} \label{eq_Zhat}
\left\{
\begin{array}{ll}
-\dfrac{1}{r^3} \dfrac{\textrm{d}}{\textrm{dr}} \left[ r^3 \dfrac{\textrm{d}}{\textrm{dr}} \hat{U}\right] + \hat{W} \hat{U} = 0 & \text{on $(0,\infty)$}, \\[10pt]
\hat{U}(0) =1 \quad \hat{U}'(0)=0, & 
\end{array}
\right.
\end{equation}
Second, we look for $\hat{U}$ of the form:
$$
\hat{U}(r) = \frac{2}{r^2}\overline{U}(r^2/2),
$$
with $\overline{U}$ a sufficiently smooth function.
Denoting by $s$ the new variable $r^2/2,$ straightforward calculations yield that
$\overline{U}$ is a solution to :
\begin{equation} \label{eq_Zbar}
\left\{
\begin{array}{ll}
- \dfrac{\textrm{d}^2}{\textrm{d$s^2$}} \overline{U} + \overline{W} \ \overline{U} = 0 & \text{on $(0,\infty)$}, \\[10pt]
\overline{U}(0) =0 \quad \overline{U}'(0)=1, & 
\end{array}
\right.
\end{equation}
where:
$$
\overline{W}(s) = -\frac{3}{2}  \dfrac{1}{(1+s/{4})^3}.
$$
Setting then $\overline{U}(s) = \sqrt{1 + s/4} \ \widetilde{U}(1/\sqrt{1+s/4}),$
we obtain that $\overline{U}$ is a solution to \eqref{eq_Zbar} if and only if 
$\widetilde{U}$ is a solution to 
$$
\left\{
\begin{array}{ll}
\tau^2 \dfrac{\textrm{d}^2}{\textrm{d$\tau^2$}} \widetilde{U} +  \tau \dfrac{\textrm{d}}{\textrm{d$\tau$}} \widetilde{U}  + (96 \tau^2 - 1)  \widetilde{U} = 0 & \text{on $(0,1)$}, \\[10pt]
\widetilde{U}(1) =0 \quad \widetilde{U}'(1)=-8, & 
\end{array}
\right.
$$Hence, $\widetilde{U}$ is a combination of Bessel functions:
$$
\widetilde{U}(\tau) = C_1 J(1, 4 \sqrt6\tau)+C_2 Y(1, 4 \sqrt6 \tau)
$$
We compute $(C_1,C_2)$ and draw the explicit combination with MAPLE. We obtain Figure \ref{fig_Bessel}. The computed solution $\widetilde{U}$
has two zeros on $(0,1).$ Moreover, it diverges in $0$ so that $\widetilde{U}(\tau) \sim K/\tau$ close to $0$ with $K\neq 0$ As a consequence 
$$
\hat{U}(r) \sim \frac{K}{4} \neq 0\ \ \mbox{when $r \to \infty$},
$$ 
and thus the index of $-\Delta+\hat{W}$ on $\dot{H}^1_{rad}$ is exactly two. Hence the index of $\mathcal B$ is at most 2. This completes the proof of Lemma \ref{lemmaindex}.
\begin{figure}[h] 
\begin{center}
\includegraphics[scale=.7]{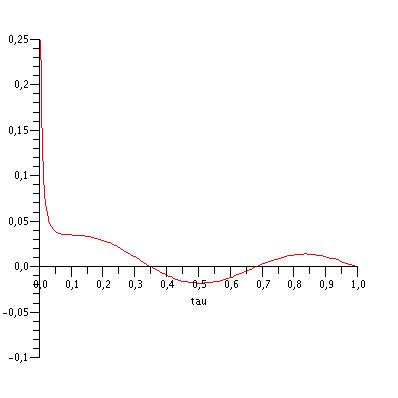}
\end{center}
\caption{Solution to \eqref{eq_Zbar} computed by MAPLE}
\label{fig_Bessel}
\end{figure}
\end{proof}

\subsection{Choice for the orthogonality conditions}
We now invert $\mathcal B$. We first check numerically that the solution $U$  does not vanish at infinity ie $$\lim_{r\to+\infty}U(r)>0,$$ see {Figure \ref{fig_U}}.

 Hence $U$ is not a resonance -note that if $U$ had been a resonance, we could have removed the resonance by diminishing a bit the potential and getting a potential with index 2 and no resonance-, and thus from standard ODE arguments, \cite{FMR}, there exists unique smooth solution in $\dot{H^1}_{rad}$ of:
\begin{equation} \label{eq_antepsi}
\left\{
\begin{array}{ll}
\mathcal BU=-\dfrac{1}{r^3} \dfrac{\textrm{d}}{\textrm{d$r$}} \left[ r^3 \dfrac{\textrm{d}}{\textrm{d$r$}} U\right] + W U = \psi & \text{on $(0,\infty)$}, \\[10pt]
U'(0)=0, & 
\end{array}
\right.  \ \ \mbox{with} \ \ (1+r^2)U\in L^{\infty}
\end{equation}
and
\begin{equation} \label{eq_anteDLQ}
\left\{
\begin{array}{ll}
\mathcal BU=-\dfrac{1}{r^3} \dfrac{\textrm{d}}{\textrm{d$r$}} \left[ r^3 \dfrac{\textrm{d}}{\textrm{d$r$}} U\right] + W U = \Phi & \text{on $(0,\infty)$}, \\[10pt]
U'(0)=0, & 
\end{array}
\right.  \ \ \mbox{with} \ \ (1+\frac{r^2}{\log r})U\in L^{\infty}
\end{equation}
We denote $\mathcal B^{-1}\psi$ and $\mathcal B^{-1}\Phi$ the respective solutions to these systems.
We recall the explicit formula 
$$
\Phi(r)=D\Lambda Q(r) = \dfrac{2 - 3r^2/4}{(1+r^2/8)^3}.
$$
In the remainder of this section we, check numerically that the restriction of $B$ to
$\mbox{Span}(\mathcal B^{-1}\psi,\mathcal B^{-1}\Phi)$ is definite negative, or equivalently:

\begin{lemma}[Numerical check of the orthogonality conditions]
\label{checknumerical}
The symmetric matrix $$
\mathbb{B} = 
\left[
\begin{array}{cc}
(\mathcal B^{-1}\psi ,\psi) & (\mathcal B^{-1}\Phi,\psi) \\
(\mathcal B^{-1}\Phi,\psi) & (\mathcal B^{-1}\Phi,\Phi)
\end{array}
\right]
$$
satisfies: 
\be
\label{numericalchecl}
(\mathcal B^{-1}\psi,\psi)<0 \ \ \mbox{and} \ \ \det \mathbb{B}>0,
\ee
and is thus definite-negative.
\end{lemma}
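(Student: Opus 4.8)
The proof is the computational half of the \cite{FMR} scheme, whose structural half is already in place: Lemma \ref{lemmaindex} gives that the Morse index of $\mathcal B$ on $\dot{H}^1_{rad}$ is at most two, and the fundamental solution $U$ of $\mathcal B U=0$ with $U(0)=1$, $U'(0)=0$ does not vanish at infinity (Figure \ref{fig_U}), so $\mathcal B$ is nonresonant and $\mathcal B^{-1}$ is well defined on $\dot{H}^1_{rad}$, hence so are the solutions $\mathcal B^{-1}\psi$ and $\mathcal B^{-1}\Phi$ of \eqref{eq_antepsi} and \eqref{eq_anteDLQ}. It then only remains to pin down the three entries of $\mathbb{B}$ and to establish the two scalar inequalities \eqref{numericalchecl}. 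Once \eqref{numericalchecl} is known, the negative definiteness of $\mathbb{B}$ is automatic: for a symmetric $2\times 2$ matrix $\det\mathbb{B}>0$ forces the two diagonal entries to have the same sign, which by $(\mathcal B^{-1}\psi,\psi)<0$ is negative, so both eigenvalues of $\mathbb{B}$ are negative, and the claimed coercivity of Lemma \ref{lemmacoercitivity} then follows from Lemma \ref{lemmaindex} and the abstract argument of \cite{FMR} (see also \cite{MS}).

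To make the three inner products computable, I would use the two independent solutions of the radial homogeneous equation $\mathcal B U=0$ — the regular solution $U$ above and the branch singular like $r^{-2}$ at the origin — and write $\mathcal B^{-1}\psi$ and $\mathcal B^{-1}\Phi$ via the variation of constants formula, selecting at $r=0$ the regular branch and at $r=\infty$ the decaying branch rather than the $O(1)$ one; this is exactly the content of the weights $(1+r^2)\mathcal B^{-1}\psi\in L^\infty$ and $(1+r^2/\log r)\mathcal B^{-1}\Phi\in L^\infty$ appearing in \eqref{eq_antepsi} and \eqref{eq_anteDLQ}. For the $\Phi$ block everything is explicit in $N=4$: the potential $W(r)=2V+\frac{3}{2}rV'$ with $V=3Q^2=3(1+r^2/8)^{-2}$ is rational, and so is $\Phi(r)=(2-3r^2/4)(1+r^2/8)^{-3}$; the integrals $(\mathcal B^{-1}\Phi,\Phi)=\int_0^\infty(\mathcal B^{-1}\Phi)\,\Phi\, r^3\,dr$ and $(\mathcal B^{-1}\Phi,\psi)=\int_0^\infty(\mathcal B^{-1}\Phi)\,\psi\, r^3\,dr$ converge absolutely — the first since $\mathcal B^{-1}\Phi\sim r^{-2}\log r$ makes the integrand $O(r^{-3}\log r)$, the second by the exponential decay of $\psi$ — and their values are obtained by quadrature.

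The entry $(\mathcal B^{-1}\psi,\psi)$ is the one that genuinely needs numerics, since $\psi$ has no closed form. Here one first solves the radial eigenvalue problem \eqref{defmupsi} for the bound state $(\psi,\zeta)$ of $H$, then integrates $\mathcal B(\mathcal B^{-1}\psi)=\psi$ with the behaviour prescribed in \eqref{eq_antepsi}, and finally evaluates $(\mathcal B^{-1}\psi,\psi)=\int_0^\infty(\mathcal B^{-1}\psi)\,\psi\, r^3\,dr$, which is found to be strictly negative. With the three numbers in hand, checking $(\mathcal B^{-1}\psi,\psi)<0$ and $\det\mathbb{B}=(\mathcal B^{-1}\psi,\psi)(\mathcal B^{-1}\Phi,\Phi)-(\mathcal B^{-1}\Phi,\psi)^2>0$ is then a matter of reading off the computed values, which is \eqref{numericalchecl}.

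The main obstacle, exactly as for the analogous computations in \cite{FMR}, is not conceptual but lies in the numerical certification: the above reduces Lemma \ref{checknumerical} to the evaluation of a $2\times 2$ symmetric matrix of integrals — fully explicit for the $\Phi$ block, requiring the numerically computed eigenfunction $\psi$ for the $\psi$ block — and the delicate point is to carry out the eigenvalue computation and the quadratures with enough precision (ideally through rigorous interval arithmetic) to guarantee the two strict inequalities of \eqref{numericalchecl}, while taking care at the two endpoints to retain precisely the branches of $\mathcal B^{-1}\psi$ and $\mathcal B^{-1}\Phi$ that make these scalar products the ones attached to $\mathcal B^{-1}$ acting on $\dot{H}^1_{rad}$.
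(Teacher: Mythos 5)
Your proposal is correct and follows essentially the same route as the paper: the paper likewise computes $\psi$, $\mathcal B^{-1}\psi$ and $\mathcal B^{-1}\Phi$ numerically (integrating the ODEs with the decay at infinity prescribed in \eqref{eq_antepsi}, \eqref{eq_anteDLQ} selecting the appropriate branch) and then evaluates the three inner products by quadrature, finding $(\mathcal B^{-1}\psi,\psi)\approx -4.63$, $(\mathcal B^{-1}\Phi,\psi)\approx 32.65$, $(\mathcal B^{-1}\Phi,\Phi)\approx -574.25$, $\det\mathbb{B}\approx 1591$, from which \eqref{numericalchecl} and negative definiteness follow exactly as you argue. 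One small caveat: the $\Phi$ block is not ``fully explicit'' since $\mathcal B^{-1}\Phi$ is itself only known numerically, and its slow $\log r/r^2$ decay makes $(\mathcal B^{-1}\Phi,\Phi)$ converge slowly, which the paper handles by integrating out to $r=1000$ and subtracting an estimated tail error --- precisely the certification issue you anticipate.
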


{\bf Numerical proof of Lemma \ref{checknumerical}} We use  standard MATLAB routines  for the computation of solutions 
to (\ref{eq_antepsi},\ref{eq_anteDLQ}). We note that we only fixed the initial value for $U'(0).$
The value $U(0)$ is left open in order to achieve the expected decay at infinity which characterizes the inverse.
In order to obtain $\mathcal B^{-1}\psi,$ we first compute $\psi.$ We obtain that the corresponding eigenvalue
is approximatively $l=-0.5860808922.$
Because $\psi$ decays exponentially, we only need to obtain an approximation on a short time-range.
We computed our solutions until $T_{\psi,max} = 30.$ 
We emphasize here that we use an explicit scheme. As a drawback, the accumulation of errors tends to make the numerical solution to become negative when the exact solution is exponential small. Hence, our scheme becomes unstable after time $\tilde{T}_{\psi,max}=18.$ 
Nevertheless, we extend our numerical solution with $0$ after this time.
This induces an exponentially small error.  
The pictures in Figure \ref{fig_psi} illustrate this computation.
On the left-hand side is drawn the obtained solution. On the right-hand side, we draw 
$\psi_{test}(r) = \psi(r)\exp(\sqrt{-l}r).$ We observe here that our solution enters the exponential asymptotic regime before the instability comes into play. 

\begin{figure}[h] 
\begin{center} 
\includegraphics[scale=.2]{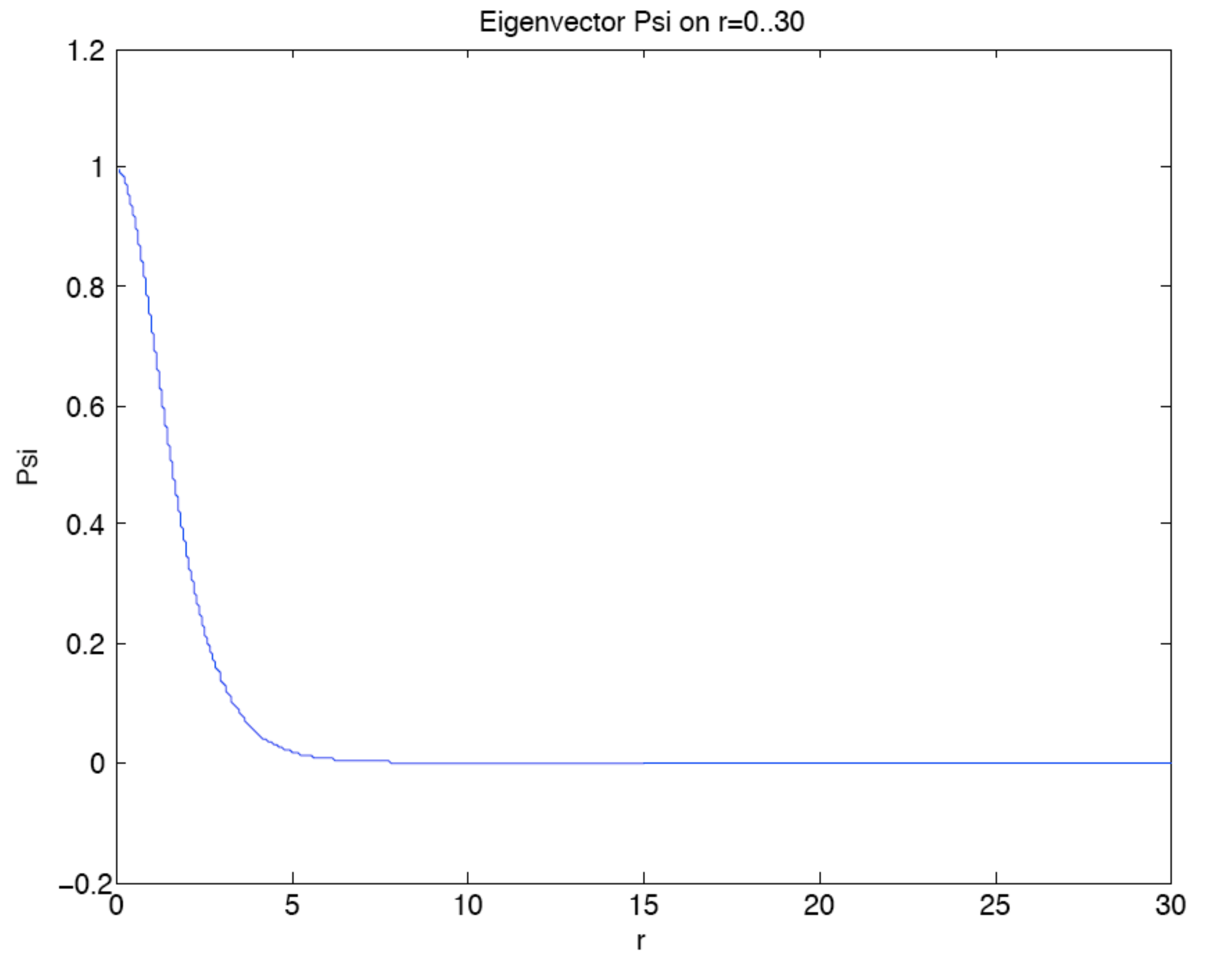}
\includegraphics[scale=.2]{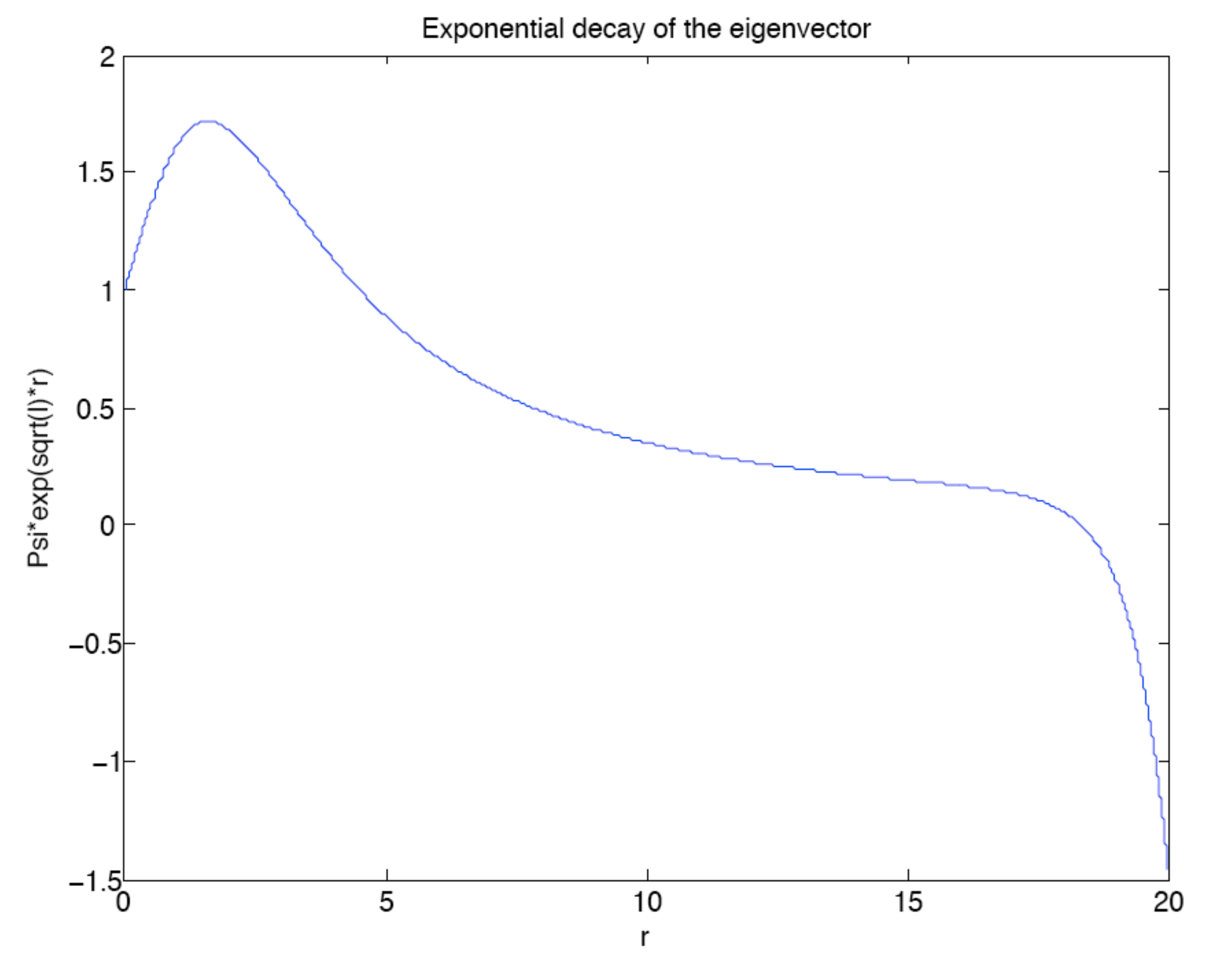}
\end{center}
\caption{Numerical simulations for $\psi$}
\label{fig_psi}
\end{figure}

The solution $\mathcal B^{-1}\psi$ is computed with the extension of $\psi.$ Straightforward ode analysis
shows that the unique solution decaying fast at infinity behaves like $1/r^2$
asymptotically. The choice of $U(0)$ is made with respect to this criterion. Figure \ref{fig_Apsi+test} illustrates that we obtained a solution with the suitable decay. 
As previously, on the left-hand side is a picture of the numerical solution. On the right-hand side
we plot $\mathcal B^{-1}\psi_{test}(r) = r^2 \mathcal B^{-1}\psi(r).$ In the latter computations, this solution is involved in scalar products with $\psi$. Hence even if drawn until $T_{max} = 300$, we only need a precise computation of this solution until $T_{\mathcal B^{-1}\psi,max} = 18.$

\begin{figure}[h] 
\begin{center} 
\includegraphics[scale=.2]{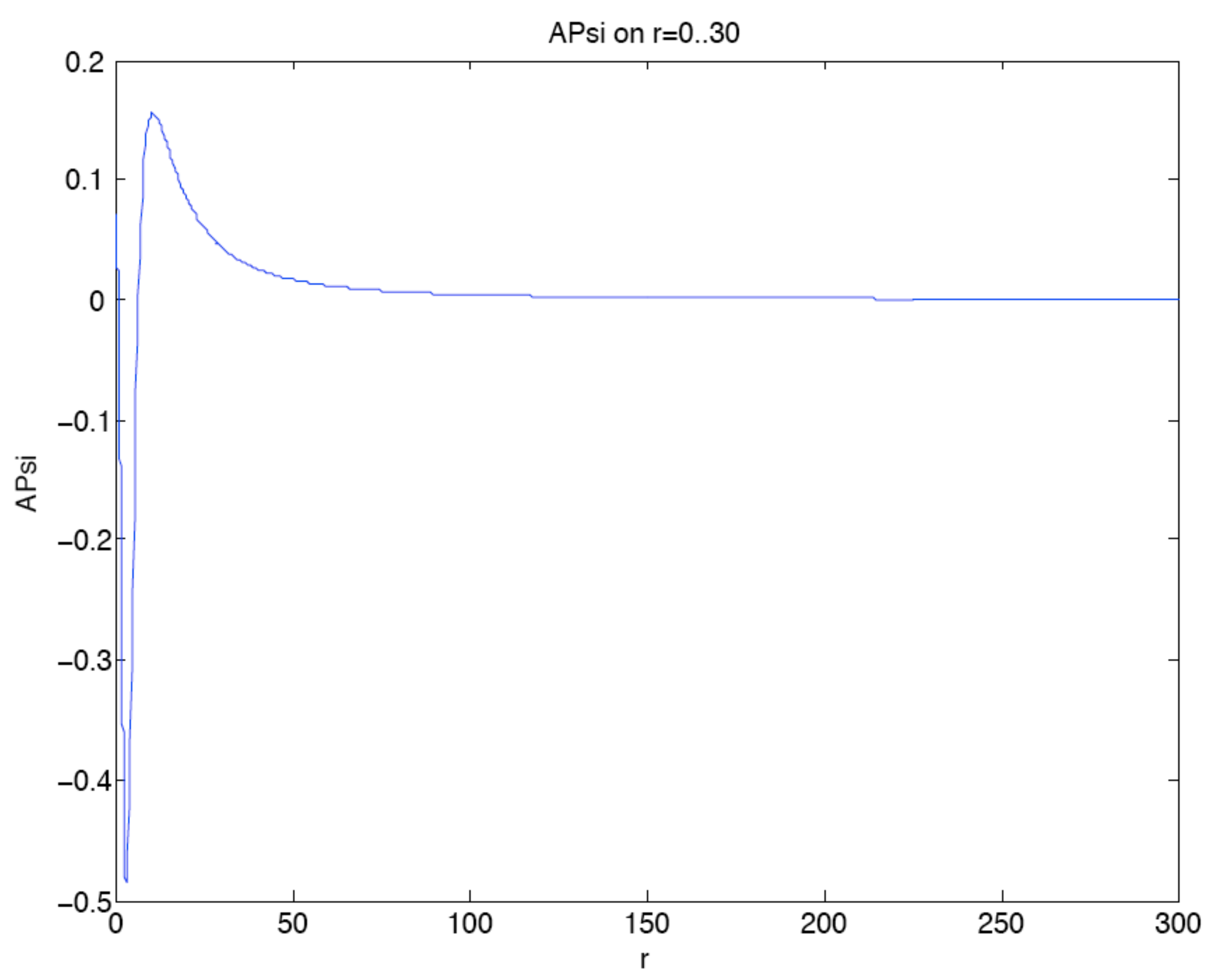}
\includegraphics[scale=.2]{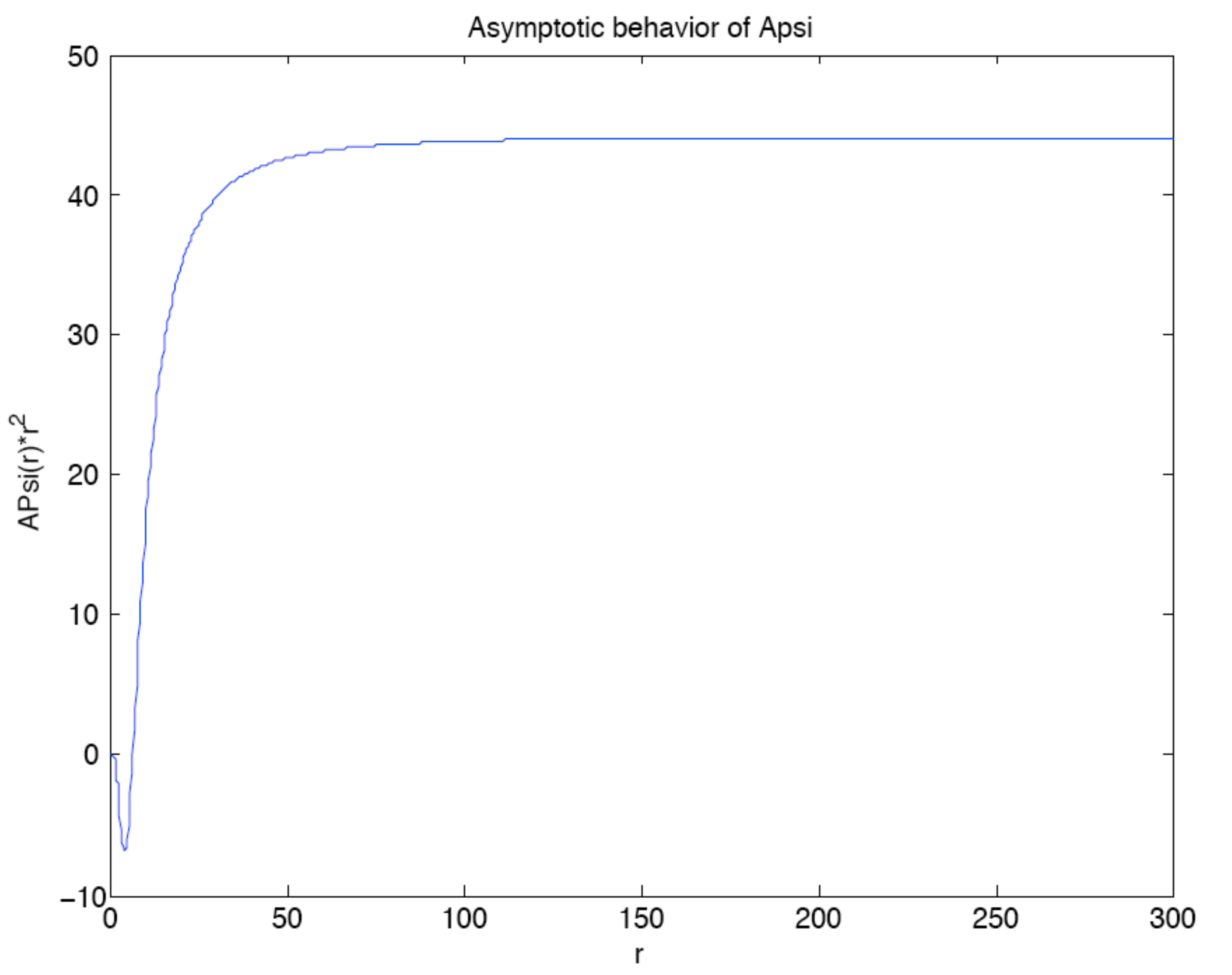}
\end{center}
\caption{Numerical simulations for $\mathcal B^{-1}\psi$}
\label{fig_Apsi+test}
\end{figure}

The last solution $\mathcal B^{-1}\Phi$ is computed with the same method. In this second case, the expected decay of the solution is $log(r)/r^2.$ Figure \ref{fig_AQ+test}  illustrates that we obtained a solution with the suitable decay. The picture on the right-hand side restricts to the time-interval $r=0..100$ because this
is the significant region.
In the latter computations, this solution is involved in integrals which converge slowly. Hence, we compute this solution until $T_{\mathcal B^{-1}\Phi,max} = 1000.$

\begin{figure}[h]\begin{center} 
\includegraphics[scale=.2]{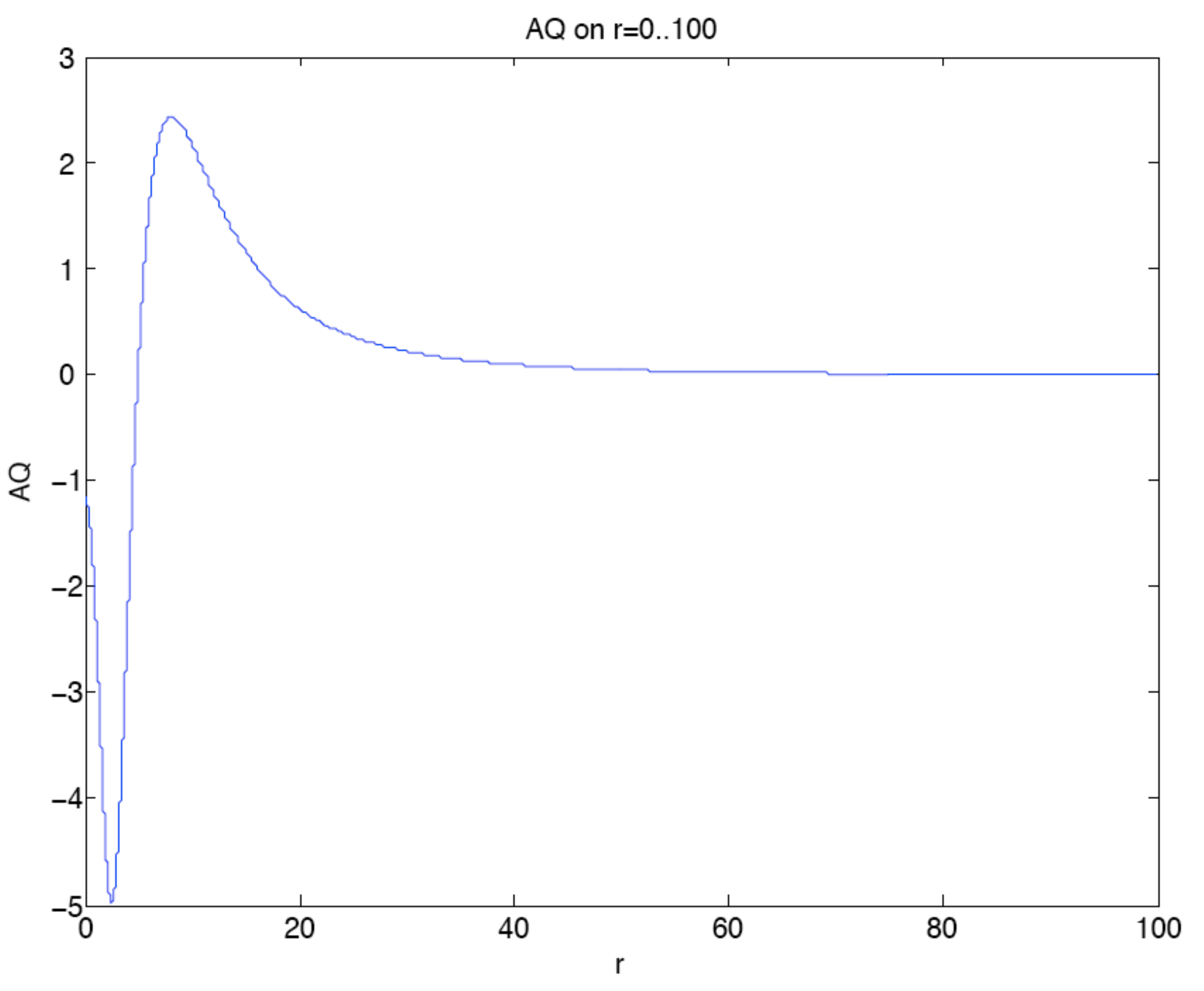}
\includegraphics[scale=.2]{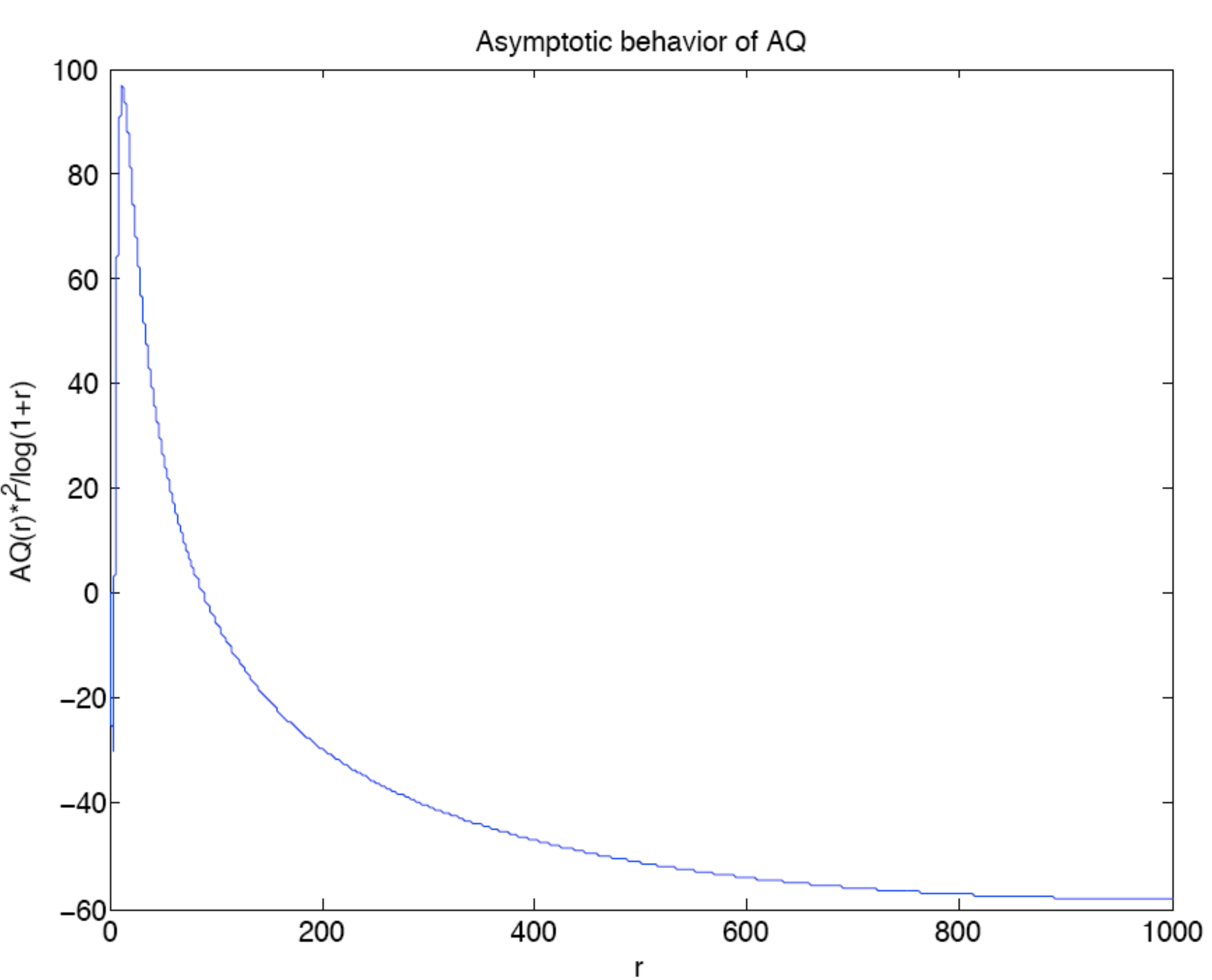}
\end{center}
\caption{Numerical simulations for $\mathcal B^{-1}\Phi$}
\label{fig_AQ+test}
\end{figure}

We now compute numerically the entries of the matrix $\mathbb{B}$. We first compute $(\mathcal B^{-1}\Phi,\psi)  = (\mathcal B^{-1}\psi,\Phi).$ 
The exponential decay of $\psi$ implies that we need to compute the first integral 
$(\mathcal B^{-1}\Phi,\psi)$ on a shorter time-interval. Hence, we prefer this computation to the second one.
We compute the $L^2$-scalar products with a standard trapezoidal method. 
Changing the time-interval and the time-step, the computations are stable up to an error of 
$10^{-2}.$ We get the following approximations for the integrals involving $\psi$. 
$$
(\mathcal B^{-1}\psi, \psi) = -4.63 \pm 10^{-2} \qquad (\mathcal B^{-1}\Phi,\psi) = 32.65 \pm 10^{-2}. 
$$

The last integral is a more involved computation. Indeed, standard real analysis 
implies that there holds : 
$$
I(M) := \int_{0}^M \mathcal B^{-1}\Phi Q(r) \Phi(r) r^{3} \text{d$r$} = (\mathcal B^{-1}\Phi,\Phi) + err(M)
$$
with a remainder satisfying $err(M) = (K+o(1))\ln(M)/M^2$
for some constant $K.$ 
This remainder going slowly to $0,$ we see numerically that our computations
has not converged even when integrating until $T_{\mathcal B^{-1}\Phi,max} = 1000$
(see Figure \ref{fig_PS22}, red crosses). In order to improve the rate of convergence
we compute an approximation of coefficient $K$ and substract the estimated error term
of our computations. This yields Figure \ref{fig_PS22}, blue circles. On this second computation we 
obtain a very good rate of convergence. Hence, we provide the approximation
$$
(\mathcal B^{-1}\Phi,\Phi) = -574.25 \pm 10^{-2}
$$
Hence $$\det(\mathbb{B})=1591\pm 10$$ which concludes the numerical proof of Lemma \ref{checknumerical}.

\begin{figure}[h] 
\begin{center} 
\includegraphics[scale=.4]{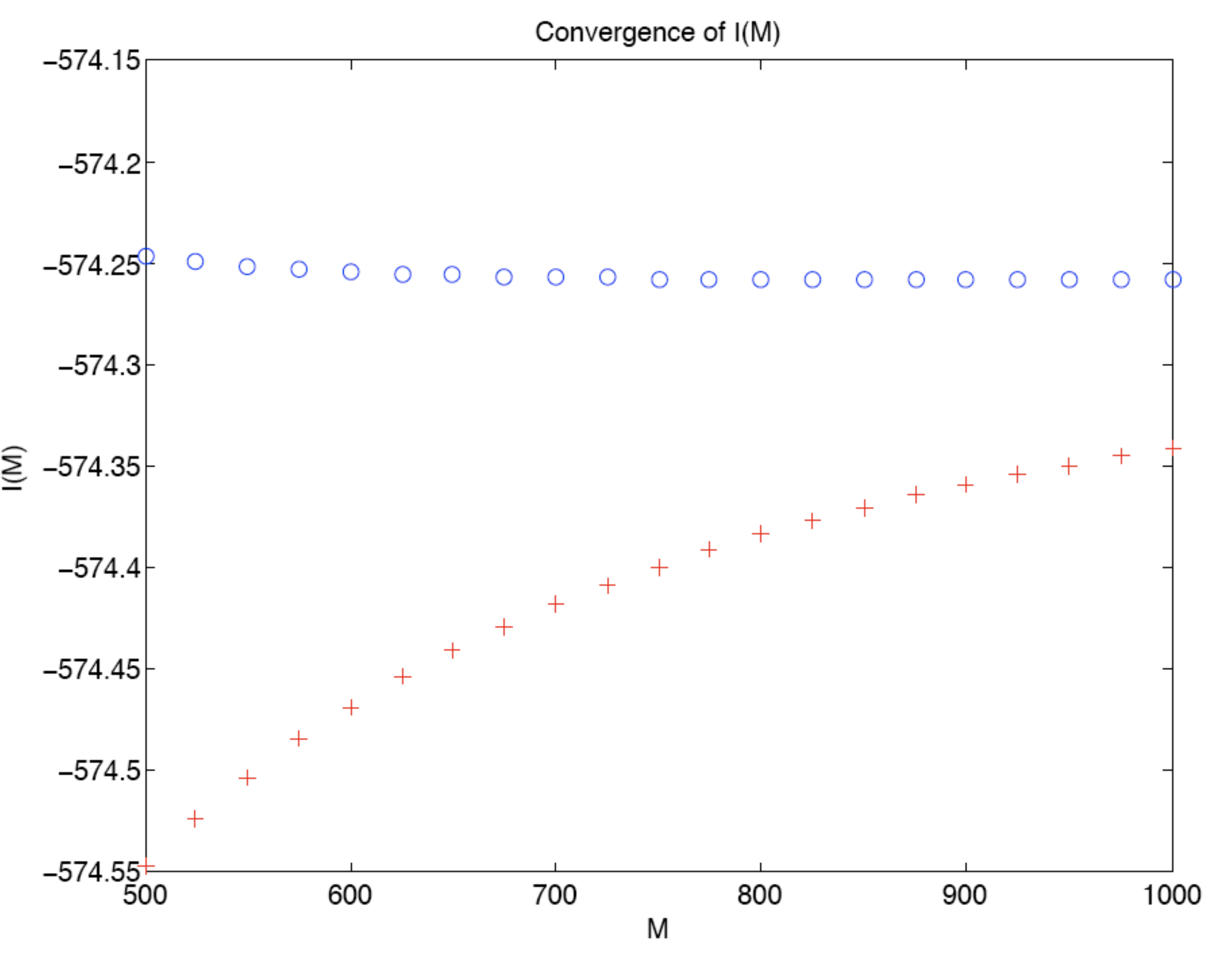}
\end{center}
\caption{Computations for $(\mathcal B^{-1}\Phi,\Phi)$}
\label{fig_PS22}
\end{figure}



\section{Some linear estimates}


We start by recalling some obvious integration-by-part results :
\begin{lemma} \label{lemmaHardy0}
For any $N \geq 3,$ there exists  a constant $C$ for which there holds, for any $v \in H^{1}_{rad}(\mathbb R^N)$
\be \label{hardy0}
\left[\int_{\mathbb R^N} \dfrac{|v(y)|^2}{|y|^2}\right]^{\frac{1}{2}} + 
\sup_{y \in \mathbb R^N} \left( |y|^{\frac{N-2}{2}} |v(y)|\right) \leq C 
\left[ \int_{\mathbb R^n} |\nabla v(y)|^2\right]^{\frac{1}{2}}.
\ee
\end{lemma}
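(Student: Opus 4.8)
The proof is a standard one-dimensional computation after passing to the radial variable, and I would organize it as follows. First, by density of radial functions in $C^\infty_c(\mathbb R^N)$ inside $H^1_{rad}(\mathbb R^N)$, and since both sides of \fref{hardy0} are continuous with respect to the $H^1$ norm, it suffices to establish the bound for smooth compactly supported radial $v$. Writing $v=v(r)$ with $r=|y|$ and using $\int_{\mathbb R^N}g(|y|)\,dy=\omega_{N-1}\int_0^{+\infty}g(r)r^{N-1}\,dr$ together with $|\nabla v|^2=(v'(r))^2$, the two claimed estimates become weighted one-dimensional inequalities in which the angular factor $\omega_{N-1}$ is common to both sides.

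For the weighted $L^2$ bound I would integrate by parts in the radial integral, exploiting $r^{N-3}=\frac{d}{dr}\bigl(\frac{r^{N-2}}{N-2}\bigr)$:
$$\int_0^{+\infty}v^2 r^{N-3}\,dr=-\frac{2}{N-2}\int_0^{+\infty}vv'\,r^{N-2}\,dr,$$
the boundary terms vanishing because $v$ is compactly supported and $N\ge 3$, so that $r^{N-2}v(r)^2\to 0$ as $r\to 0^+$. Cauchy--Schwarz then gives
$$\int_0^{+\infty}v^2 r^{N-3}\,dr\le \frac{2}{N-2}\Bigl(\int_0^{+\infty}v^2 r^{N-3}\,dr\Bigr)^{1/2}\Bigl(\int_0^{+\infty}(v')^2 r^{N-1}\,dr\Bigr)^{1/2},$$
whence $\int_0^{+\infty}v^2 r^{N-3}\,dr\le \frac{4}{(N-2)^2}\int_0^{+\infty}(v')^2 r^{N-1}\,dr$, which is exactly $\int_{\mathbb R^N}|v|^2/|y|^2\le \frac{4}{(N-2)^2}\int_{\mathbb R^N}|\nabla v|^2$.

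For the pointwise bound I would use the fundamental theorem of calculus: since $v$ has compact support,
$$r^{N-2}v(r)^2=-\,2\,r^{N-2}\int_r^{+\infty}v(s)v'(s)\,ds,$$
and estimating $r^{N-2}\le s^{N-2}=s^{-1}s^{N-1}$ on the range of integration yields
$$r^{N-2}v(r)^2\le 2\int_0^{+\infty}\frac{|v(s)|}{s}\,|v'(s)|\,s^{N-1}\,ds\le 2\Bigl(\int_0^{+\infty}\frac{v^2}{s^2}s^{N-1}\,ds\Bigr)^{1/2}\Bigl(\int_0^{+\infty}(v')^2 s^{N-1}\,ds\Bigr)^{1/2}.$$
The first factor is now controlled by the Hardy inequality just proved, so $|y|^{(N-2)/2}|v(y)|\le C(N)\|\nabla v\|_{L^2}$. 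Adding the two estimates gives \fref{hardy0}.

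The only point requiring (minor) care is the density/approximation step and the vanishing of the boundary terms, which is where the hypotheses $N\ge 3$ and $v\in H^1$ rather than merely $v\in\dot H^1$ are used; this is entirely routine, and I do not expect any genuine obstacle.
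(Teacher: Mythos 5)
Your argument is correct: the radial Hardy inequality via integration by parts against $r^{N-2}/(N-2)$ plus Cauchy--Schwarz, and the Strauss-type pointwise bound via the fundamental theorem of calculus combined with the Hardy bound, is exactly the standard computation. The paper offers no proof of this lemma (it is recalled as an ``obvious integration-by-part result''), and your write-up, including the density/boundary-term remarks, is precisely the argument the authors have in mind.
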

Looking for control on further derivatives, we prove:

\begin{lemma}[Hardy inequalities]
\label{lemmaloghrdy}
Let $N=4$. Then $\forall R>2$, $\forall v\in H^2_{rad}(\RR^N)$, there holds the following controls:
\be
\label{hardyhtwo}
\int  \frac{|\pa_y v|^2}{y^2}\lesssim \int (\Delta v)^2,
\ee
\be
\label{harfylog}
\int_{y\le R} \frac{|v|^2}{y^4(1+|\log y|)^2}\lesssim  \int_{y\le R} \frac{|\pa_y v|^2}{y^2}+\int_{y\leq 2}|v|^2.
 \ee
\end{lemma}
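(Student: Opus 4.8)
The plan is to treat the two inequalities separately, reducing each to a one–dimensional computation in the radial variable $r=|y|$, and in both cases to first reduce to $v\in C^\infty_c(\mathbb R^4)$ radial: the general case follows by mollification (which preserves radiality) together with Fatou's lemma, since the right–hand sides are continuous along such an approximation in $H^2$ while the left–hand sides are lower semicontinuous. Writing $\int_{\mathbb R^4}f\,dy=\int_0^\infty f(r)r^3\,dr$ and $\Delta v=v''+\tfrac3r v'$ in dimension four, \fref{hardyhtwo} is then purely algebraic: expanding the square and integrating by parts once via $\int_0^\infty\tfrac6r v'v''\,r^3\,dr=3\int_0^\infty\big((v')^2\big)'r^2\,dr=-6\int_0^\infty(v')^2r\,dr$ (the boundary terms vanishing since $v$ is smooth with $v'(0)=0$ and compactly supported) yields
\be
\int_{\mathbb R^4}|\Delta v|^2\,dy=\int_0^\infty (v'')^2 r^3\,dr+3\int_0^\infty\frac{(v')^2}{r^2}\,r^3\,dr\ \geq\ 3\int_{\mathbb R^4}\frac{|\pa_y v|^2}{|y|^2}\,dy.
\ee

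For \fref{harfylog}, set $A:=\int_{y\le R}\frac{|\pa_y v|^2}{y^2}=\int_0^R(v')^2r\,dr$ and $B:=\int_{y\le 2}|v|^2$, and split $\int_0^R=\int_0^1+\int_1^2+\int_2^R$. On $[1,2]$ the weight $\frac1{r(1+|\log r|)^2}$ is bounded, so that piece is $\lesssim\int_1^2 v^2 r^3\,dr\le B$. The two remaining pieces are the genuine endpoint Hardy estimates and I would handle them by integrating by parts against the antiderivative of the weight. On $(2,R)$ put $G(r)=\int_r^R\frac{ds}{s(1+\log s)^2}=\frac1{1+\log r}-\frac1{1+\log R}$, so $G(R)=0$, $0\le G\le\frac1{1+\log r}$, $-G'(r)=\frac1{r(1+\log r)^2}$; then
\be
\int_2^R\frac{v^2}{r(1+\log r)^2}\,dr=v(2)^2G(2)+2\int_2^R vv'\,G\,dr,
\ee
and using the borderline identity $G(r)^2(1+\log r)^2\le 1$ together with $2|vv'|G\le\tfrac12\frac{v^2}{r(1+\log r)^2}+2(v')^2r$ one absorbs the cross term and gets $\int_2^R\frac{v^2}{r(1+\log r)^2}\,dr\lesssim v(2)^2+A$. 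Symmetrically, on $(0,1)$ one uses $H(r)=\int_0^r\frac{ds}{s(1+|\log s|)^2}=\frac1{1+|\log r|}$, with $H(0^+)=0$, $H(1)=1$, $H'=$ the weight, and again $H(r)^2(1+|\log r|)^2\le1$; the boundary contribution at $r=0$ is $-\lim_{r\to0}v(r)^2H(r)=0$ for smooth $v$, and the same absorption gives $\int_0^1\frac{v^2}{r(1+|\log r|)^2}\,dr\lesssim v(1)^2+A$.

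It remains to bound $v(1)^2$ and $v(2)^2$. For this I would pick, by the mean value argument, $r_0\in[1/2,1]$ (resp. $[1,2]$) with $v(r_0)^2\lesssim\int_{y\le 2}|v|^2=B$, and then use the fundamental theorem of calculus and Cauchy–Schwarz on the short interval separating $r_0$ from $1$ (resp. $2$) to get $v(1)^2+v(2)^2\lesssim B+\int_0^R(v')^2r\,dr=A+B$. Collecting the three pieces then gives \fref{harfylog} with a constant independent of $R$.

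The expansion/integration by parts for \fref{hardyhtwo} and the trace bounds on $v(1),v(2)$ are routine. The one delicate point is getting \fref{harfylog} with an $R$–independent constant on the scale–unbounded tail $2\le r\le R$: a naive Hardy estimate loses a factor $|\log R|$ there, and the remedy is precisely to integrate by parts against the tail $G(r)=\int_r^R(\text{weight})$, whose defining feature is the borderline bound $G(r)\,(1+|\log r|)\le1$ — this is exactly what recovers the logarithmic weight missing on the derivative side. One should also keep in mind that in $\mathbb R^4$ the embedding $H^2\hookrightarrow L^\infty$ just fails, so $v$ need not be bounded near the origin; this is harmless because in the integration by parts on $(0,1)$ the origin boundary term has a favourable sign, and the general case is reached from $C^\infty_c$ by the Fatou argument above.
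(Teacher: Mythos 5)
Your proof is correct and takes essentially the same route as the paper's: \fref{hardyhtwo} via expanding $\int(\Delta v)^2=\int(\partial_{yy}v)^2+3\int|\partial_y v|^2/y^2$, and \fref{harfylog} via integration by parts against the antiderivative $\pm 1/(1\pm\log y)$ of the logarithmic weight (the paper's radial vector fields $f,\tilde f$), with the trace values controlled by a mean-value point in $[1,2]$ and the cross term absorbed. The differences (splitting at $y=1,2$ rather than at a single point $a\in[1,2]$, Young's inequality instead of Cauchy--Schwarz before absorption) are purely cosmetic.
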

\be 
\label{harfysanslog}
\int_{R \le y\le 2R}\frac{|v|^2}{y^4} \lesssim {\log R}\int_{y\le R} \frac{|\pa_y v|^2}{y^2}+\int_{y\leq 2}|v|^2.
\ee

\begin{proof} Let $v$ smooth. \eqref{hardyhtwo} follows from the explicit formula after integration by parts $$\int(\Delta v)^2=\int (\pa_{yy}v+\frac{N-1}{y}\pa_yv)^2=\int (\pa_{yy}v)^2+(N-1)\int\frac{|\pa_y v|^2}{y^2}.$$ To prove \fref{harfylog}, let 
\be
\label{ofueofu}
a\in [1,2] \ \ \mbox{such that} \ \ |v(a)|^2\leq \int_{1\leq y\leq 2}|v|^2.
\ee
Let $f(y)=-\frac{{\bf e}_y}{y^3(1+\log (y))}$ so that $\nabla \cdot f=\frac{1}{y^4(1+|\log y|)^2}$, and integrate by parts to get: 
\bea
\label{stepwzofp}
\nonumber \int_{a\le y\le R} \frac{|v|^2}{y^4(1+\log y)^2} &=&  \int_{a\leq y\le R} |v|^2\nabla \cdot f\\
\nonumber & = &- \left[\frac{|v|^2}{1+\log (y)}\right]_{a}^R +2\int_{y\le R}  \frac{v\partial_y v}{y^3(1+\log y)}\\
& \lesssim &  |v(a)|^2+\left(\int_{y\le R} \frac{|v|^2}{y^4(1+|\log y|)^2}\right)^{\frac{1}{2}}\left(\int_{y\le R} \frac{|\pa_yv|^2}{y^2}\right)^{\frac{1}{2}}.
\eea
similarly, using $\tilde{f}(y)=\frac{{\bf e}_y}{y^3(1-\log (y))}$, we get:
\bea
\label{stepwzofpbis}
\nonumber  \int_{\e\le y\le a} \frac{|v|^2}{y^4(1-\log y)^2} & =&  \int_{a\leq y\le R} |v|^2\nabla \cdot \tilde{f} \\
\nonumber & = & \left[\frac{|v|^2}{1-\log (y)}\right]_{\e}^a +2\int_{y\le a} v\partial_y v \frac{1}{y^3(1-\log y)}\\
& \lesssim &  |v(a)|^2+\left(\int_{y\le R} \frac{|v|^2}{y^4(1+|\log y|)^2}\right)^{\frac{1}{2}}\left(\int_{y\le R} \frac{|\pa_yv|^2}{y^2}\right)^{\frac{1}{2}}.
\eea
\fref{ofueofu}, \fref{stepwzofp} and \fref{stepwzofpbis} now yield \fref{harfylog}.
The last inequality \eqref{harfysanslog} is a straightforward variant of \cite[Lemma B.1, (B.4)]{RaphRod}
and is left to the reader.
\end{proof}

\begin{lemma}[Coercitivity estimates with H]
\label{lemmahardy1}
Let $\psi$ be the first eigenvector of $H$. Then there exists $c>0$ and $M_0\geq 1$ such that for $M\geq M_0$, there exists $\delta(M)>0$ such that given  $u\in H^1_{rad}(\RR^N)$,
there holds 
\be
\label{coercH}
(Hu,u) \geq  c\int(\pa_y u)^2 -  \frac{1}{c}\left[(u,\psi)^2+(u,\chi_M\Phi)^2\right]
\ee
\be
\label{secondordercontrol}
\int (Hu)^2 \geq  \delta(M)\left[\int\frac{(\pa_yu)^2}{y^2}+\int\frac{u^2}{y^4(1+|\log y|)^2}\right]- \frac{1}{\delta(M)}(u,\chi_M\Phi)^2.
\ee
\end{lemma}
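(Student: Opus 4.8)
The plan is to prove the two estimates separately, using in both cases the known spectral structure of $H$ in the radial class recalled in the introduction: $H=-\Delta-\tfrac{N+2}{N-2}Q^{4/(N-2)}$ has exactly one negative eigenvalue $-\zeta$ with exponentially localized eigenvector $\psi$, essential spectrum $[0,+\infty)$, and a one dimensional zero resonance space in $\dot H^1_{rad}$ spanned by $\Lambda Q$ (the second homogeneous solution $\Gamma$ being excluded by its $y^{-(N-2)}$ singularity at the origin). Moreover, by \eqref{degenfond} and the decay \eqref{cancelaationphi}, $(\Lambda Q,\chi_M\Phi)\to(\Lambda Q,\Phi)=32$ as $M\to+\infty$, so there is $M_0$ such that for $M\geq M_0$ this pairing is bounded away from $0$. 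The approach parallels the arguments of \cite{FMR}, \cite{RaphRod}.

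\emph{Proof of \eqref{coercH}.} I would first establish, by a compactness--contradiction argument, coercivity on the codimension two subspace: there is $c_1>0$, uniform for $M\geq M_0$, such that $(Hu,u)\geq c_1\int(\partial_yu)^2$ whenever $(u,\psi)=(u,\chi_M\Phi)=0$. If this failed, normalize $\int(\partial_yu_n)^2=1$ and extract a weak limit $u_n\rightharpoonup u_\infty$ in $\dot H^1_{rad}$, strong in $L^2_{loc}$; since the potential lies in $L^2(\mathbb R^4)$ it acts compactly on $\dot H^1$ via Sobolev, so $\int \tfrac{N+2}{N-2}Q^{4/(N-2)}u_n^2\to\int \tfrac{N+2}{N-2}Q^{4/(N-2)}u_\infty^2$, and $(Hu_\infty,u_\infty)\leq 0$ with $(u_\infty,\psi)=0$. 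As $H\geq0$ on $\{\psi\}^\perp$ and vanishes there only on $\ker H$, this forces $u_\infty=a\Lambda Q$; then $(u_\infty,\chi_M\Phi)=0$ and $M\geq M_0$ give $a=0$, hence $u_\infty=0$, hence $(Hu_n,u_n)\to 1$, a contradiction. The penalized form \eqref{coercH} then follows by writing an arbitrary $u$ as $u=v+w$ with $w$ in $\mathrm{span}\{\psi,\chi_M\Phi\}$ and $v$ $L^2$-orthogonal to both, noting $\|w\|\lesssim|(u,\psi)|+|(u,\chi_M\Phi)|$ through the (uniformly invertible) Gram matrix, applying the subspace coercivity to $v$, and absorbing the cross terms by Cauchy--Schwarz.

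\emph{Proof of \eqref{secondordercontrol}.} Step one is the integration by parts identity, valid for $u$ with $Hu\in L^2_{rad}$ and $V=\tfrac{N+2}{N-2}Q^{4/(N-2)}$,
$$\int(Hu)^2=\int(\Delta u)^2-2\int V|\nabla u|^2+\int(\Delta V+V^2)u^2.$$
Using Hardy \eqref{hardyhtwo} to get $\int(\Delta u)^2\gtrsim\int\tfrac{|\nabla u|^2}{y^2}$, the decay $|V|\lesssim(1+y^4)^{-1}$, $|\Delta V|+|V^2|\lesssim(1+y^6)^{-1}$, the logarithmic Hardy inequality \eqref{harfylog} together with its exterior analogue (same one dimensional integration by parts with weight $1/(1+\log y)$, using $u(y)\to0$ at infinity) to propagate from $\int\tfrac{|\nabla u|^2}{y^2}$ to $\int\tfrac{u^2}{y^4(1+|\log y|)^2}$, and Young's inequality plus standard interior elliptic interpolation to throw the non lower order contributions into a local term, one obtains the subcoercivity estimate
$$\int(Hu)^2\ \geq\ \delta_1\Big[\int\tfrac{|\nabla u|^2}{y^2}+\int\tfrac{u^2}{y^4(1+|\log y|)^2}\Big]-C\int_{y\leq 2}u^2 .$$
Step two removes the local term on the subspace $(u,\chi_M\Phi)=0$: a sequence $u_n$ with weighted norm $1$ and $\int(Hu_n)^2\to0$ has a weak limit $u_\infty$ in the weighted Hilbert space $X=\{\int\tfrac{|\nabla u|^2}{y^2}+\int\tfrac{u^2}{y^4(1+|\log y|)^2}<\infty\}$ and strong $L^2_{loc}$ limit, with $Hu_\infty=0$; since $\Gamma\notin X$ near the origin one gets $u_\infty=a\Lambda Q$, and $(u_\infty,\chi_M\Phi)=0$ with $M\geq M_0$ forces $a=0$. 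The subcoercivity estimate then gives $1=\|u_n\|_X^2\lesssim\int(Hu_n)^2+\int_{y\leq2}u_n^2\to0$, a contradiction, so $\int(Hu)^2\geq \delta_2\|u\|_X^2$ on $\{(u,\chi_M\Phi)=0\}$. De-orthogonalizing as in \eqref{coercH} produces the penalty $\tfrac1{\delta(M)}(u,\chi_M\Phi)^2$ and accounts for the residual $M$-dependence (entering through $\|\chi_M\Phi\|_X$ and $\int(H\chi_M\Phi)^2$).

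The main obstacle is the weighted analysis in step one of \eqref{secondordercontrol}: controlling $\int\tfrac{u^2}{y^4(1+|\log y|)^2}$ up to $y=+\infty$ by $\int(Hu)^2$ is borderline and forces one to use the logarithmic Hardy inequalities of Lemma \ref{lemmaloghrdy} in precisely their stated form and to keep careful track of which error terms are genuinely of lower order and which must be absorbed into the local term and then removed by compactness. The structural reason the orthogonality to $\chi_M\Phi$ is unavoidable — while $\psi$, carrying a nonzero eigenvalue, is not needed — is that $\Lambda Q$, although not in $H^2$, does belong to the weighted space $X$, so it persists as an admissible weak limit and must be killed by the constraint.
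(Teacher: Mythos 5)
Your proposal is correct and follows essentially the same route as the paper: for \eqref{secondordercontrol} the identity $\int(Hu)^2=\int(\Delta u)^2-2\int V|\nabla u|^2+\int(\Delta V+V^2)u^2$ combined with the Hardy inequalities of Lemma \ref{lemmaloghrdy} gives a subcoercivity bound, and a compactness--contradiction argument identifies the only possible obstruction as $\Lambda Q$ (since $\Gamma$ is too singular at the origin), which is excluded by the orthogonality to $\chi_M\Phi$ via \eqref{degenfond}; for \eqref{coercH} you supply the standard spectral/compactness argument that the paper explicitly leaves to the reader. The only differences (absorbing the lower-order terms into a purely local $\int_{y\lesssim 1}u^2$ rather than the paper's weighted remainders, and making the de-orthogonalization/penalization step explicit) are cosmetic.
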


\begin{proof} \fref{coercH} is a standard consequence of the coercitivity of the linearized energy which admits exactly $\psi$ as bound state  and $\Lambda Q$ as resonance at the origin, the good enough localization of $\Phi$ \fref{defphi} and the nondegeneracy \fref{cancelaationphi}. The detailed proof is left to the reader.\\
To prove \fref{secondordercontrol}, we first observe the key subcoercivity property:
\bea
\label{subceorcuio}
\nonumber \int(Hu)^2& = & \int(\Delta u+Vu)^2=\int(\Delta u)^2-2\int V(\pa_yu)^2 +\int (\Delta V+V^2)u^2\\
& \geq & c\left[\int (\Delta u)^2+\int\frac{u^2}{1+y^6}\right]-\frac{1}{c}\left[\int\frac{(\pa_yu)^2}{1+y^4}+\int\frac{u^2}{1+y^8}\right].
\eea
where we used the asymptotic value $$V(y)=\frac{N(N+2)(N-2)}{y^4}\left[1+O(\frac{1}{y^2})\right] \ \ \mbox{as} \ \ y\to+\infty.$$ \fref{secondordercontrol} now follows by contradiction. Let $M>0$ fixed and consider a sequence $u_n$ such that 
\be
\label{normlaization}
\int\frac{(\pa_yu_n)^2}{y^2}+\int\frac{u_n^2}{y^4(1+|\log y|)^2}=1
\ee
and 
\be
\label{seeumporgpo}
\int(Hu_n)^2\leq \frac{1}{n}, \ \  \ (u_n,\chi_M\Phi)=0.
 \ee
Then by semicontinuity of the norm, $u_n$ weakly converges on a subsequence to $u_{\infty}\in H^1_{loc}$ solution to $Hu_{\infty}=0.$ $u_{\infty}$ is smooth away from the origin and hence the explicit integration of the ODE and the regularity assumption at the origin $u_{\infty}\in H^1_{loc}$ implies $$u_{\infty}=\alpha \Lambda Q.$$ On the one hand, the uniform bound \fref{normlaization} together with the local compactness of Sobolev embeddings ensure up to a subsequence: $$\int\frac{(\pa_yu_n)^2}{1+y^4}+\int \frac{|u_n|^2}{1+y^8}\to \int\frac{(\pa_yu_{\infty})^2}{1+y^4}+\int \frac{|u_{\infty}|^2}{1+y^8} \ \ \mbox{and} \ \ (u_n,\chi_M\Phi)\to (u_{\infty},\chi_M\Phi)$$ thanks to the $\chi_M$ localization. We thus conclude that  $$\alpha(\Lambda Q,\chi_M\Phi)=(u_{\infty},\chi_M\Phi)=0 \ \ \mbox{and thus} \ \ \alpha=0.$$ On the other hand, the subcoercivity property \fref{subceorcuio}, the Hardy control \fref{hardyhtwo}, \fref{harfylog} and \fref{normlaization}, \fref{seeumporgpo} ensure $$\int\frac{(\pa_yu_n)^2}{1+y^4}+\int\frac{u_n^2}{1+y^8}\geq C>0$$ from which
$$\alpha^2\left[ \int\frac{(\pa_y\Lambda Q)^2}{1+y^4}+\int \frac{|\Lambda Q|^2}{1+y^8}\right]= \int\frac{(\pa_yu_{\infty})^2}{1+y^4}+\int \frac{|u_{\infty}|^2}{1+y^8}\geq C>0 \ \ \mbox{and thus} \ \ \alpha\neq 0.$$ A contradiction follows. This concludes the proof of \fref{secondordercontrol} and Lemma \ref{lemmahardy1}. \end{proof}

Straightforward computations show that the coercitivity estimates with $H$ can be adapted to any of the operator $H_{\lambda}$ yielding,
for any $\lambda >0$ and $u \in H^1_{rad}(\mathbb R^N),$
\be
\label{coercHlambda}
(H_{\lambda}u,u)  \geq  c \int(\pa_y u)^2   -  \frac{1}{c\lambda^4}\left[(u,(\psi)_{\lambda})^2+(u,(\chi_M\Phi)_{\lambda})^2\right]
\ee
for the same $c$ and $\delta(M)$ as in Lemma \ref{lemmahardy1}. 

\end{appendix}


\end{document}